\newcommand\keywordsname{Key words}
\newcommand\AMSname{AMS subject classifications}
\newenvironment{@abssec}[1]
{\if@twocolumn
\section*{#1}%
\else
\vspace{.05in}\footnotesize
\parindent .2in
{\upshape\bfseries #1. }\ignorespaces
\fi}
\par\vspace{.1in}\fi}
\newenvironment{keywords}{\begin{@abssec}{\keywordsname}}{\end{@abssec}}
\providecommand{\Div}{\operatorname{div}}          % Divergence
\providecommand*{\Dist}[2]{\operatorname{dist}({#1};{#2})}   % distance
\providecommand*{\Dist}[2]{\Dist{#1}{#2}}
\newcommand{\Bb}{{\boldsymbol{b}}}
\newcommand{\Bf}{{\boldsymbol{f}}}
\newcommand{\Bn}{{\boldsymbol{n}}}
\newcommand{\Bu}{{\boldsymbol{u}}}
\newcommand{\Bv}{{\boldsymbol{v}}}
\newcommand{\Bw}{{\boldsymbol{w}}}
\newcommand{\Bx}{{\boldsymbol{x}}}
\newcommand{\Bz}{{\boldsymbol{z}}}
\newcommand{\hBu}{{\boldsymbol{\hat u}}}
\newcommand{\hBv}{{\boldsymbol{\hat v}}}
\newcommand{\hBw}{{\boldsymbol{\hat w}}}
\newcommand{\hBx}{{\boldsymbol{\hat x}}}
\newcommand{\BC}{{\boldsymbol{C}}}
\newcommand{\BD}{{\boldsymbol{D}}}
\newcommand{\BH}{{\boldsymbol{H}}}
\newcommand{\BL}{{\boldsymbol{L}}}
\newcommand{\BM}{{\boldsymbol{M}}}
\newcommand{\BP}{{\boldsymbol{P}}}
\newcommand{\BS}{{\boldsymbol{S}}}
\newcommand{\BV}{{\boldsymbol{V}}}
\newcommand{\BW}{{\boldsymbol{W}}}
\newcommand{\taubf}{\boldsymbol{\tau}}
\newcommand{\Cf}{\mathcal{F}}
\newcommand{\Ck}{\mathcal{K}}
\newcommand{\Cl}{\mathcal{L}}
\newcommand{\Cq}{\mathcal{Q}}
\newcommand{\Cr}{\mathcal{R}}
\newcommand{\Ct}{\mathcal{T}}
\newcommand{\bbI}{\mathbb{I}}
\newcommand{\bbJ}{\mathbb{J}}
\newcommand{\bbR}{\mathbb{R}}
\newcommand{\bbS}{\mathbb{S}}
\providecommand*{\wt}[1]{\widetilde{#1}}
\providecommand*{\wh}[1]{\widehat{#1}}
\newcommand*{\N}[1]{\left\|{#1}\right\|}     % Double bar norm
\newcommand*{\Lp}[2][\defaultdomain]{L^{#2}({#1})}
\newcommand*{\Lpv}[2][\defaultdomain]{\BL^{#2}({#1})}
\newcommand*{\NLp}[3][\defaultdomain]{\N{#2}_{\Lp[#1]{#3}}}
\newcommand*{\NLpv}[3][\defaultdomain]{\N{#2}_{\Lpv[#1]{#3}}}
\newcommand*{\Ltwo}[1][\defaultdomain]{\Lp[#1]{2}}
\newcommand*{\Ltwov}[1][\defaultdomain]{\Lpv[#1]{2}}
\newcommand*{\NLtwo}[2][\defaultdomain]{\NLp[#1]{#2}{2}}
\newcommand*{\NLtwov}[2][\defaultdomain]{\NLpv[#1]{#2}{2}}
\newcommand*{\Linfv}[1][\defaultdomain]{\BL^{\infty}({#1})}
\newcommand*{\NLinfv}[2][\defaultdomain]{\N{#2}_{\Linfv[{#1}]}}
\newcommand*{\Hm}[2][\defaultdomain]{H^{#2}({#1})}
\newcommand*{\Hmv}[2][\defaultdomain]{\BH^{#2}({#1})}
\newcommand*{\Hone}[1][\defaultdomain]{\Hm[#1]{1}}
\newcommand*{\Honev}[1][\defaultdomain]{\Hmv[#1]{1}}
\newcommand*{\Hdiv}[1][\defaultdomain]{\boldsymbol{H}(\Div,{#1})}
\newcommand*{\bHdiv}[2][\defaultdomain]{\boldsymbol{H}_{#2}(\Div,{#1})}
\newcommand*{\zbHdiv}[1][\defaultdomain]{\bHdiv[#1]{0}}
\newcommand*{\jump}[2][]{\left \llbracket{#2}\right\rrbracket_{#1}}
\newcommand*{\avg}[2][]{\left\{\hskip -3.5pt\left\{{#2}
\right\}\hskip -3.5pt\right\}_{#1}}
\newcommand*{\AVG}[2][]{\left\{\hskip -6pt\left\{{#2}
\right\}\hskip -6pt\right\}_{#1}}
\newcommand{\ol}{\overline}
\newcommand{\be}{\begin{eqnarray}}
\newcommand{\ee}{\end{eqnarray}}
\newcommand{\ben}{\begin{eqnarray*}}
\newcommand{\een}{\end{eqnarray*}}
\newtheorem{theorem}{Theorem}[section]%  meant for continuous numbers
\newtheorem{lemma}[theorem]{Lemma}%
\newtheorem{example}{Example}%
\numberwithin{equation}{section}
\newtheorem{proof}{\textbf{Proof.}}
\title{A divergence-free parametric finite element method for 3D Stokes equations on curved domains\\
}
\author{Lingxiao Li
\thanks{School of Mathematics and Statistics, Henan University, 475004, Kaifeng, China. (lilingxiao@lsec.cc.ac.cn).}
\and
Haiyan Su
\thanks{College of Mathematics and System Sciences, Xinjiang University, 830046, Urumqi, China. (shymath@126.com)}
\and
He Zhang
\thanks{SKLMS, Institute of Computational Mathematics and Scientific/Engineering Computing, Academy of Mathematics and Systems Science,
Chinese Academy of Sciences, 100190, Beijing, China; School of Mathematical Science, University of Chinese Academy of Sciences, 100190, Beijing, China. (zhanghe@lsec.cc.ac.cn)}
\and
Weiying Zheng
\thanks{SKLMS, Institute of Computational Mathematics and Scientific/Engineering Computing, Academy of Mathematics and Systems Science,
Chinese Academy of Sciences, 100190, Beijing, China; School of Mathematical Science, University of Chinese Academy of Sciences, 100190, Beijing, China. (zwy@lsec.cc.ac.cn)}
}
\begin{document}
\date{submitted on 06-Sep-2025}
\maketitle

\begin{abstract}
  The Stokes equations play an important role in the incompressible flow simulation. In this paper, a novel
divergence-free parametric mixed finite element method is proposed for solving three-dimensional
Stokes equations on domains with piecewise smooth boundaries. The flow velocity and pressure
are discretized with high-order parametric Brezzi-Douglas-Marini elements and volume elements,
respectively, on curved tetrahedral meshes. Utilizing the interior-penalty discontinuous Galerkin (IPDG) technique, we prove the inf-sup condition for the mixed finite element pair, and high-order optimal error estimates in the energy norm, with the help of the extension and transformation of the true solution to computational domain. Moreover, the discrete velocity is exactly divergence-free, meaning that $\Div\Bu_h=0$ holds in the curved computational domain. Numerical experiments are conducted to support
the theoretical analyses.
\end{abstract}

\begin{keywords}
Parametric mixed finite element method; Stokes equations; curved boundary; divergence-free; optimal error estimate.
\end{keywords}

\section{Introduction}
Incompressible flows or the incompressible Navier-Stokes equations
have wide-ranging applications in both scientific research and engineering applications \cite{chen2016}.
As a simplified model, the Stokes equations play a fundamental role in both the design of numerical methods and the theoretical analysis of numerical solutions for fluid problems \cite{cockburn2014,girault1986}.

In this paper, we focus on the Stokes equations
 \begin{subequations}\label{Stokes}
 \begin{align}
 -\nu\Delta\Bu+\nabla p =\,& \Bf ~~\mbox{in} ~~\Omega,\label{S1}\\
 \Div\Bu =\,& 0  ~~\mbox{in} ~~\Omega,\label{S2}\\
 \Bu= \,& \bold{0}  ~~\mbox{on} ~~\Gamma, \label{Sb}
 \end{align}
 \end{subequations}
where $\Omega\subset \mathbb{R}^{3}$ is a bounded domain with possiblely curved boundary $\Gamma:=\partial\Omega$, $\nu$ the kinematic viscosity, $\Bu$ the fluid velocity, ${p}$ the thermal pressure, and $\Bf$ the body force.
We assume that $\Gamma$ is Lipschitz continuous and piecewise $C^{k+1}$ for $k\ge 1$.

For partial differential equations (PDEs) on domains with curved boundaries or interfaces,
it is well-known that high-order finite element methods (FEMs) can not achieve optimal convergence rates on  straight meshes (comprising polygonal or polyhedral elements), due to geometric errors of approximating $\Omega$ by a polygonal or polyhedral computational domain \cite{Bertrand2016,bre2008,Cheng2008, Dobrev2012}. One widely used strategy to address this challenge is the adoption of isoparametric element methods \cite{cia2002,Lenoir1986}. These methods employ the same shape functions to represent the numerical solution and to construct the approximate domain $\Omega_h$, ensuring that the geometric discrepancy between $\Omega_h$ and $\Omega$ does not adversely affect the discretization error of FEM. In contrast, parametric FEM utilizes different types of shape functions for discretizing the PDE and constructing the computational domain, respectively. This approach offers greater flexibility in the design of high-order numerical schemes.
However, the discrepancy between $\Omega_h$ and $\Omega$ can pose significant challenges in both error analysis and practical implementation \cite{Lenoir1986}, especially in the context of nonconforming finite element methods \cite{Bertrand2016}.
For the mixed formulation of Poisson equation on curved domains, we refer to the recent work \cite{Bertrand2016} for parametric Raviart-Thomas (RT) finite element method, and to
\cite{Li2024} for curved weak Galerkin finite element method.

In simulations of incompressible flows, it is well known that non-conservative schemes can introduce pressure-dependent consistency errors, which may significantly affect discrete velocity accuracy at high Reynolds numbers. Therefore, preserving mass conservation or the divergence-free condition $\Div\Bu_h = 0$ is a crucial property that numerical methods should maintain \cite{John2017}. A substantial body of literature focuses on divergence-free discretizations over polygonal or polyhedral domains (see, e.g., \cite{Cockburn2007, John2017, cockburn2014} and references therein).
Cockburn, Kanschat, and Sch\"{o}tzau introduced and analyzed a class of discontinuous Galerkin (DG) methods for the incompressible Navier-Stokes equations, discretizing the velocity with $\BH(\Div)$-conforming finite elements instead of $H^1(\Omega)$-conforming elements, thereby yielding exactly divergence-free numerical velocities \cite{Cockburn2007}. Subsequently, Greif, Li, Sch\"{o}tzau, and Wei developed a mixed DG method for the stationary MHD model that also enforces a divergence-free discrete velocity \cite{Greif2010}. In 2018, Hiptmair, Li, Mao, and Zheng constructed a fully divergence-free method where both the discrete velocity and magnetic induction are exactly divergence-free \cite{hip2018}. A key aspect of these works is the use of $\BH(\Div,\Omega)$-conforming elements, such as RT or Brezzi-Douglas-Marini (BDM) elements, to discretize divergence-free vector fields.

However, when solving PDEs on curved domains, numerical solutions computed on straight meshes often fail to achieve optimal convergence rates \cite{Bertrand2016, Durst2024, Neilan2021}.
The aim of this paper is to develop a high-order parametric FEM for the three-dimensional (3D) Stokes equations \eqref{Stokes} on curved domains, with the following objectives:
\begin{itemize}
\item the discrete velocity is exactly divergence-free, and

\item the numerical solution attains optimal convergence rates.
\end{itemize}

We now review relevant previous work. In \cite{Bertrand2016}, Bertrand and Starke used the contravariant Piola transform to analyze parametric RT mixed elements for the 3D Poisson equation on curved domains. Neilan and Otus \cite{Neilan2021} constructed divergence-free Scott-Vogelius elements for the Stokes equations on two-dimensional (2D) curved domains, achieving both divergence-free solutions and optimal convergence. More recently, Durst and Neilan \cite{Durst2024} extended these results to high-order polynomial approximations for the 2D Stokes equations. Yang, Zhai, and Zhang \cite{Yang2024} applied a weak Galerkin method to the 2D Stokes interface problem with a curved interface, deriving optimal error estimates in both energy and $L^2$-norms.

In the context of staggered finite volume methods, a high-order approach based on polynomial Reconstruction for Off-site Data (ROD) was proposed for the 2D Navier-Stokes equations on domains with smooth boundaries \cite{cos2022}. This technique preserves high-order accuracy even on boundary elements. The ROD idea was later extended to the DG setting in \cite{Santos2024}, leading to a high-order DG-ROD method. Additionally, Bai and Li \cite{bai2024} introduced a novel framework for analyzing parametric finite element approximations for surface evolution under geometric flows.

High-order numerical methods that simultaneously enforce mass conservation and deliver optimal convergence on 3D curved domains remain scarce. This paper aims to fill this gap by developing a high-order parametric finite element method for the 3D Stokes equations on domains with piecewise smooth boundaries. Inspired by Bertrand and Starke \cite{Bertrand2016}, we construct a high-order accurate computational domain $\Omega_h$ via a piecewise polynomial mapping $\BM_h$, along with a curved tetrahedral mesh $\Ct_h$ of $\Omega_h$. The velocity and pressure are discretized using parametric BDM elements and volume-consistent elements, respectively. We propose an interior-penalty DG (IPDG) formulation for solving the Stokes system \eqref{Stokes} on these spaces. We establish inf-sup stability for the finite element pair and derive high-order optimal error estimates for the numerical solutions. Most notably, the discrete velocity is exactly divergence-free: $\Div\Bu_h = 0$ in $\Omega_h$.

The remainning parts of the article are organized as follows. In section~2, we introduce the parametric finite element spaces on curved meshes. An IPDG formulation is proposed to solve the Stokes equations. A discrete inf-sup condition is proven for the parametric finite element space pair, and the well-posedness of the discrete saddle-point problem is established. In section~3, we derive the optimal error estimates for numerical solutions. In section~4, we present two numerical experiments to verify the convergence rates and divergence-free property of numerical solutions. In section~6, we conclude the paper and outline some ongoing work.
Throughout the paper, $\nu$ is a positive constant, and vector-valued quantities are denoted by boldface notations, e.g., $\BL^2(\Omega) = (L^2(\Omega))^3$. The notation $a \lesssim b$ means that $a \leq C b$ holds for a constant $C>0$ which is independent of the mesh size $h$, and $a \gtrsim b$ means $b\lesssim a$.

\section{Parametric mixed DG finite element method}\label{sec:FEMspace}

First we introduce notations for Sobolev spaces used in this paper. For $1\leq p \leq\infty$ and an integer $m\ge 0$, $W^{m,p}(\Omega)$ denotes the space of functions whose partial derivatives up to order $m$ belong to $L^p(\Omega)$.
The norms and semi-norms on $W^{m,p}(\Omega)$ are denoted by $\|\cdot\|_{W^{m,p}(\Omega)}$ and $|\cdot|_{W^{m,p}(\Omega)}$, respectively.
When $p=2$, we will also use the conventional notation $H^m(\Omega)=W^{m,2}(\Omega)$.
The subspace of $H^m(\Omega)$, which consists of functions with vanishing
traces on $\partial\Omega$, is denoted by $H_0^m(\Omega)$.
Moreover, $L_0^2(\Omega)$ denotes the space of square-integrable functions on $\Omega$ that have vanishing mean values. The inner product of two functions $u,v\in \Ltwo$ will be denoted by $(u,v)_\Omega$.

The space of functions with square-integrable divergences is denoted by
\[
\BH(\Div,\Omega) = \left\{\Bv \in \BL^2(\Omega) : \Div\Bv \in L^2(\Omega)\right\}.
\]
Moreover, $\BH_0(\Div,\Omega)$ denotes the subspace of $\Hdiv$ consisting of functions which have vanishing normal traces on the boundary.
Euclidian norm of a vector $\Bb \in \bbR^n$ will be denoted by $|\Bb|$.

\subsection{Straight and curved meshes}

High-order BDM finite elements can provide optimal approximations on polyhedral domains (cf. \cite{Cockburn2007,Greif2010}). However, the optimal approximation is only guaranteed for the first-order BDM elements on domains with curved boundaries \cite{Bertrand2016}.
For notational convenience, we distinguish between three domains: the physical domain $\Omega$ where the original Stokes equations are posed,  the
approximated computational domain $\Omega_h$ where the parametric finite element method and the discrete problem are formulated, and the polyhedral reference domain $\widehat{\Omega}_h$ on which conventional BDM elements are defined.
Their respective boundaries are denoted by $\Gamma=\partial\Omega$, $\Gamma_{h}=\partial\Omega_{h}$, and  $\widehat{\Gamma}_{h}=\partial\widehat\Omega_{h}$, respectively. The unit outward normal of $\widehat{\Gamma}_{h}$ is denoted by $\hat{\Bn}$.

First we construct a quasi-uniform, shape-regular, and body-fitted straight tetrahedral mesh of $\Omega$, denoted by $\wh\Ct_h$, such that
all boundary vertices of the mesh lie precisely on $\Gamma = \partial \Omega$. Moreover, each tetrahedron $\wh K\in\wh\Ct_h$ {\it is open and has at most three vertices on $\Gamma$}. All faces of $\wh\Ct_h$ form a set $\widehat{\mathcal{F}}_{h}$, and all boundary faces form a set $\widehat{\mathcal{F}}_{h}^{\partial}$. The diameter of an element $\widehat{K}$ is denoted by $h_{\widehat{K}}$, and the diameter of a face $\widehat{F}$ is denoted by $h_{\widehat{F}}$. The maximum diameter of all elements is denoted by $h$. The union of tetrahedra in $\wh\Ct_h$ defines the reference domain $\wh\Omega_h$, namely,
\ben
\wh\Omega_h \overset{\rm def}{=}
\bigg(\bigcup\limits_{\wh K\in\wh\Ct_h}\wh K \bigg) \bigcup
\bigg(\bigcup\limits_{\wh F\in\wh\Cf_h\backslash\wh\Cf_h^\partial}\wh F \bigg).
\een

For the error analysis of the parametric mixed finite element method proposed in this paper, we assume $k\ge 1$ and make the assumptions on $\Omega$ and the mesh. \vspace{2mm}
\begin{itemize}
\item [(A1)] There exists a continuous and invertible mapping $\BM:\wh\Omega_h\to\Omega$ such that
\ben
\BM|_{\wh K} \in C^{k+1}(\wh K)\quad \forall\,\wh K\in\wh\Ct_h,
\qquad \BM(\wh K)=\wh{K}\;\;\hbox{if}\;\; \wh K\subset\Omega.
\een

\item [(A2)] Let $\BM^{-1}$ denote the inverse of $\BM$ and $\bbJ_{\BM}$ the Jacobian matrix of $\BM$. Then (see  \cite{Bertrand2016})
\begin{align}\label{M-bound}
\|\bbJ_{\BM}\|_{\BW^{k,\infty}(\widehat{\Omega}_h)}\lesssim 1,~~~\|\bbJ_{\BM^{-1}}\|_{\BW^{k,\infty}(\Omega)} \lesssim 1.
\end{align}
\end{itemize}
The assumptions are mild and can be satisfied when $\wh\Ct_h$ is fine enough (cf. e.g. \cite{Bertrand2016} for more details). Since $\Gamma$ is piecewise $C^{k+1}$-smooth, assumption (A1) actually inherits the smoothness of $\Gamma$.
From (A1), it is easy to see that $\widetilde{\Ct_h}$ induces a precise partition of $\Omega$, defined by
\ben
\widetilde{\mathcal{T}}_{h} = \big\{\wt K= \BM(\wh K) : \wh K\in\wh\Ct_h\big\}.
\een
Next we define the approximate computational domain $\Omega_h$.
Let $\BM_h$ be a continuous piecewise polynomial mapping where the restriction of $\BM_h$ on an element is the  $k^{\rm th}$-order Lagrange interpolation,
which coincides with the one used in the isoparametric finite element framework \cite{Lenoir1986}.
It defines the approximate domain $\Omega_h$ and a partition of $\Omega_h$ (see Fig.~\ref{fig:curvedmesh})
\ben
\Ct_{h} = \big\{K= \BM_h(\wh K) : \wh K\in\wh\Ct_h\big\},\qquad
\Omega_h = \hbox{interior}\bigg(\bigcup_{K\in\Ct_h}\ol K\bigg).
\een

By (A1)--(A2) and standard error estimates of Lagrange interpolations \cite[Theorem~(4.4.20), page 108]{bre2008}, we have
\begin{align}\label{domain:app}
\|\BM_{h}-\BM\|_{\BW^{m,\infty}(\widehat{\Omega}_h)}\lesssim h^{k+1-m},\quad
m=0,1.
\end{align}
Let $\bbJ_{\BM_h}$ denote the Jacobian matrix of $\BM_h$.
By \eqref{M-bound} and \eqref{domain:app}, when $h$ is sufficiently small,
$\BM_h^{-1}$ exists and the following hold (also see the comments below equation (5.11) of \cite{Bertrand2014})
\begin{align}\label{Mh-bound}
\NLinfv[\wh\Omega_h]{\bbI -\bbJ_{\BM_h}} \lesssim h,\quad
\max\limits_{\wh K\in\wh\Ct_h}
\big(\N{\bbJ_{\BM_h}}_{\BW^{k,\infty}(\widehat{K})}
+\N{\bbJ_{\BM_h}^{-1}}_{\BW^{k,\infty}(K)}\big)
\lesssim 1.
\end{align}
where $\bbI$ is the identity matrix.

\begin{figure}[!htbp]
  \centering
  % Requires \usepackage{graphicx}
  \includegraphics[width=4.0cm]{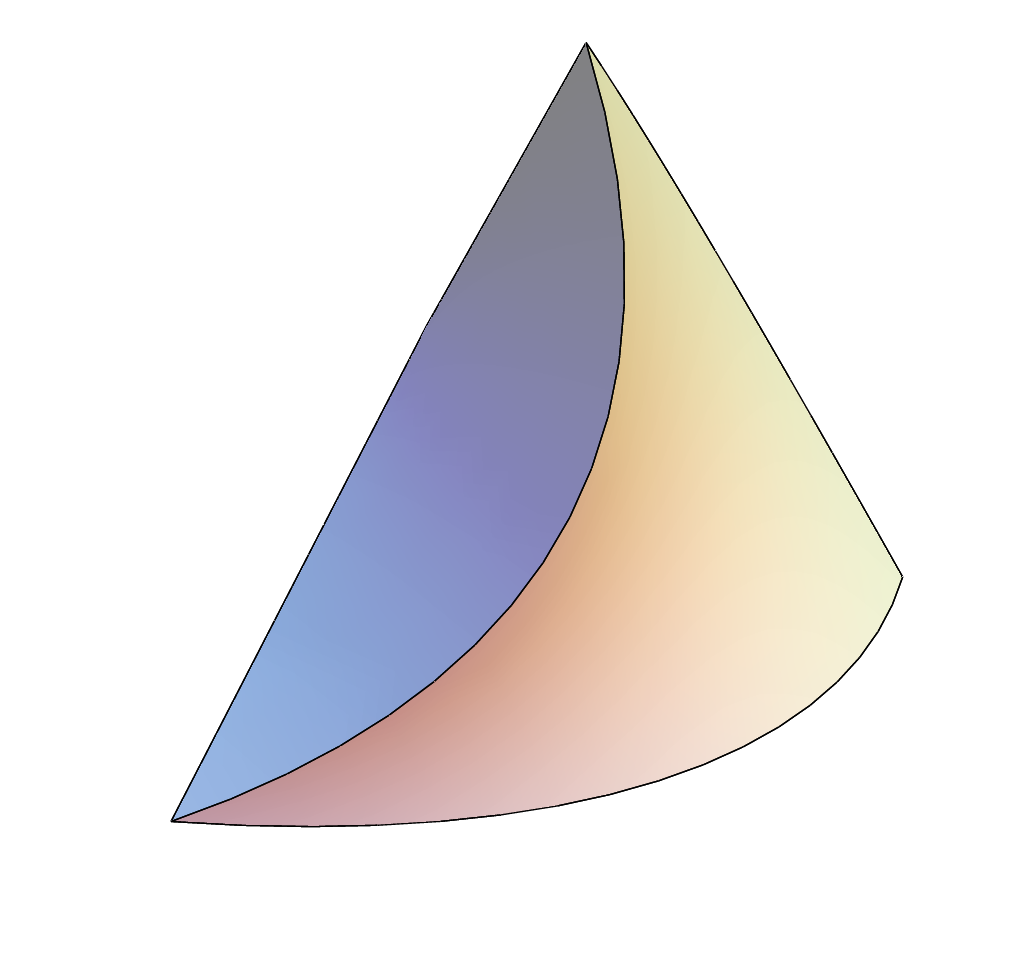}
  \includegraphics[width=5.0cm]{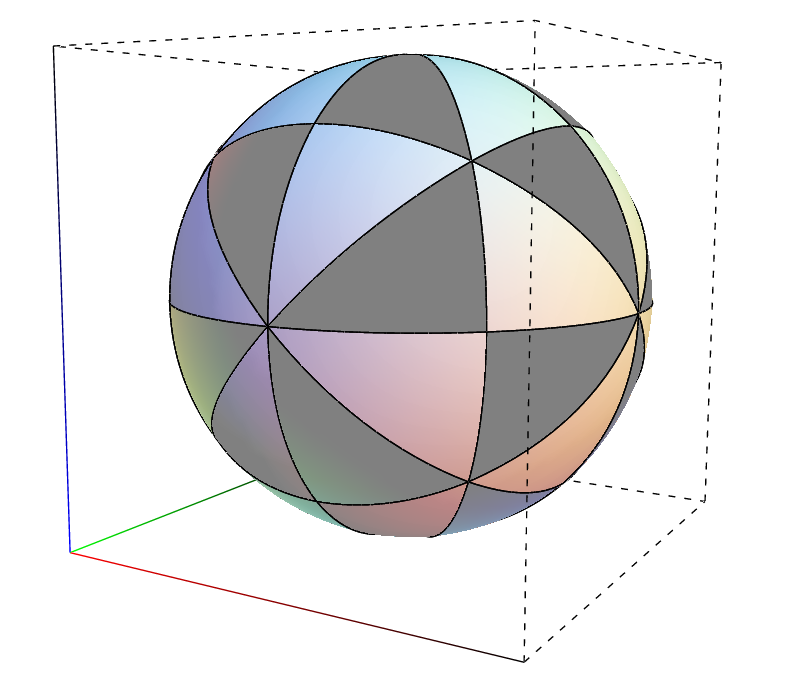}
  \caption{Left: A curved tetrahedron. Right: A sphere domain divided by curved tetrahedra.}\label{fig:curvedmesh}
\end{figure}
For $k=1$, the piecewise polynomial mapping $\BM_{h}$ is identical, implying that  $\Omega_{h}=\widehat{\Omega}_{h}$ and $\Ct_{h}=\widehat\Ct_{h}$.
For $k\geq 2$,  $\BM_{h}$  deviates from the identity map only on tetrahedra that have two or three vertices on $\Gamma$.

\subsection{Parametric mixed finite element spaces}

The inf-sup stable BDM finite element space pair
is defined on $\widehat{\mathcal{T}}_{h}$ as follows (cf. \cite{boffi2013mixed})
\begin{align}
&\widehat{\bm{V}}_{h}^{k}=\left\{{\hBv}_{h}\in \BH\big(\widehat{\Div},\widehat{\Omega}_{h}\big) :
    \hBv_{h}|_{\widehat{K}}\in \BP_k\big(\widehat{K}\big) \;\;
    \forall\,\wh K\in\wh\Ct_h\right\},\label{vps}\\
&\widehat{Q}_{h}^{k-1} =\left\{ \hat{q}_h\in L_0^2\big(\widehat{\Omega}_h\big):
\hat{q}_{h}|_{\widehat{K}}\in P_{k-1}(\widehat{K})\;\;
    \forall\,\wh K\in\wh\Ct_h\right\},\label{pps}
\end{align}
where $\widehat{\Div}$ denotes the divergence operator, meaning that the differentiations are taken with respect to $\hBx$. We write $\widehat{\bm{V}}_{h,0}^{k}\overset{\rm def}{=} \widehat{\bm{V}}_{h}^{k}\cap\BH_0\big(\wh{\rm div},\wh\Omega_h\big)$ for convenience.

Let  $J_d=\det(\bbJ_{\BM_h})$.
Using Piola's transformation, the parametric finite element space pair on $\mathcal{T}_{h}$ is defined by (also see \cite{Bertrand2016})
\begin{align}
&\bm{V}_{h}^{k}=\left\{\bm{v}_{h}\in \BL^2(\Omega_h):
    \Bv_h\circ\BM_h = J_d^{-1}\bbJ_{\BM_h}\hBv_{h}\;\; \forall\, \hBv_{h}\in\widehat{\bm{V}}_{h}^{k}\right\}, \label{ps}\\
&Q_{h}^{k-1}=\left\{\hat{q}_h\circ \BM_h^{-1}:
\hat{q}_{h}\in \widehat{Q}_{h}^{k-1}  \right\}.  \label{psp}
\end{align}
Here we remark that the functions in $\bm{V}_{h}^{k}$ and $Q_{h}^{k-1}$ are not piece-wise polynomials anymore.
Suppose $\Bv_h$ and $\hat\Bv_h$ satisfy the relation in \eqref{ps}.
Using the chain rule, one can verify that
\be\label{Inf-mid2}
\begin{aligned}
&\Div\Bv_{h}(\Bx)      = \frac{1}{J_d}(\widehat{\Div}~\hBv_{h})(\hBx),\\
&\nabla\Bv_{h}(\Bx)   = \frac{1}{J_d}  \bbJ_{\BM_h}(\widehat{\nabla} \hBv_h)(\hBx) \bbJ_{\BM_h}^{-1}
- \frac{1}{J_{d}^2}\bbJ_{\BM_h}\BC \bbJ_{\BM_h}^{-1}+ \frac{1}{J_{d}} \BD \bbJ_{\BM_h}^{-1}.
\end{aligned}
\ee
where $\BC$ and $\BD$ are the $3\times3$ matrices, with their specific forms being:
\begin{equation*}
\BC_{_{ij}}=\hat v_{hi} \frac{\partial J_d}{\partial\hat{x}_{j}},
\quad \BD_{_{ij}}=\sum\limits_{k=1}^{3}\hat v_{hk} \frac{\partial^2 x_i}{\partial\hat{x}_{j}\hat{x}_{k}}, \quad i,j=1,2,3,
\end{equation*}
and $\hat v_{hi}$, $i=1,2,3$ represents the components of the velocity vector $\hBv_{h}$, and $\hat{x}_{j}$, for  $j=1,2,3$ denotes the components of the coordinate vector $\hBx$.
The two formula \eqref{Inf-mid2} play important role in our error estimate and coding implementation.
From the property of Piola's transformation (see section 2.1.3 of \cite{boffi2013mixed}), we have $\BV_h^k\subset \Hdiv[\Omega_h]$. It is easy to verify the equivalence
\begin{align} \label{equiv-1}
\hat\Bv_h\in\wh\BV_{h,0}^k\quad \Longleftrightarrow\quad
\Bv_h\in \BV_{h,0}^k\overset{\rm def}{=} \BV_h^k\cap\zbHdiv[\Omega_h].
\end{align}

\subsection{Parametric interior-penalty mixed finite element method}\label{sec:alg}

In this section, we propose the parametric mixed finite element method for solving \eqref{S1}--\eqref{Sb} based on an IPDG formulation \cite{arnold2002,Schotzau2003}.
For $m\ge 0$, we introduce the broken Sobolev spaces
\ben
H^1(\Ct_h) = \{v\in\Ltwo[\Omega_h]: v|_K \in H^1(K)\;\;
\forall\,K\in\Ct_h\},\quad \BH^1(\Ct_h) =\big(H^1(\Ct_h)\big)^3,
\een
The space $H^1(\wh\Ct_h)$ and its norm are defined similarly.
The broken gradients of $v\in\Hone[\Ct_h]$ and $\hat{v}\in\Hone[\wh\Ct_h]$ are, respectively, defined by
\ben
\nabla_h v = \sum\limits_{K\in\Ct_h}\chi_K\nabla v, \quad
\wh\nabla_h \hat{v} = \sum\limits_{\wh K\in\wh\Ct_h}\chi_{\wh K}\wh\nabla \hat{v},
\een
with $\chi_K$ and $\chi_{\wh K}$ are the characteristic functions of $K$ and $\wh K$, respectively. Clearly $\nabla_h v\in\Ltwov[\Omega_h]$ and $\wh\nabla_h \hat{v}\in\Ltwov[\wh\Omega_h]$.

For convenience, we denote by $\mathcal{F}_{h}$ the set of faces of $\Ct_h$, $\Cf_h^I$ the set of interior faces, and $\Cf_h^{\partial}$ the set of faces on $\Gamma_h$.
For $F\in\Cf_h^I$, let $K^{+},K^{-}\in\Ct_h$ be two elements which share the common face $F$.
The average and jump of a function $v\in\Hone[\Ct_h]$ on $F$ are defined by
\ben
\avg{v}\overset{\rm def}{=} (v^{+}+v^{-})/2, \quad
\jump{v} \overset{\rm def}{=} v^{+} \otimes\Bn^{+} +  v^{-} \otimes \Bn^{-}\;\; \mbox{on}~~F.
\een
where $v^{\pm}$ represent the traces of $v|_{K^{\pm}}$ on face $F$, respectively. On a boundary face $F \in \Cf_h^{\partial}$, we set accordingly
\ben
\avg v =v, \quad  \jump{v}= v\otimes\Bn \quad \mbox{on}~~F.
\een

For convenience, we introduce the following notations: for $\theta\in\bbR$,
\ben
\langle h_F^\theta v, w\rangle_{F} = h_F^\theta  \int_F v w ~\mathrm{ds},\qquad
\langle h_{\wh F}^\theta  \hat{v},\hat{w}\rangle_{\wh F}  = h_{\wh F}^\theta  \int_{\wh F} \hat{v} \hat{w}~\mathrm{d\hat{s}},
\een
where $h_F$ and $h_{\wh F}$ are the diameters of $F$ and $\wh F$, respectively. Due to the property of Piola's transformation, we have (see section 2.1.3 of \cite{boffi2013mixed})
\be\label{eq:dsds}
\mathrm{ds} = J_{d} \|\bbJ_{\BM_h}^{-T} \hat{\Bn}\|~\mathrm{d\hat{s}},
\ee
The weighted norm on $\BH^1(\Ct_h)$ and $\BH^1(\wh\Ct_h)$ are defined by
\begin{align*}
\|\Bv\|_{\Omega_h}\overset{\rm def}{=}\,& \Big(\NLtwov[\Omega_h]{\nabla_h \Bv}^2
+\sum_{F\in\Cf_h} \langle h_{F}^{-1} \jump{\Bv}, \jump{\Bv}\rangle_{F}\Big)^{1/2},\\
\|\hat \Bv\|_{\wh\Omega_h}\overset{\rm def}{=}\,&
    \Big(\big\|\wh\nabla_h \hat\Bv\big\|^2_{\Ltwov[\wh\Omega_h]}
    +  \sum_{\wh F\in\wh\Cf_h} \langle h_{\widehat{F}}^{-1} \jump{\hat\Bv}, \jump{\hat\Bv}\rangle_{\wh F}\Big)^{1/2}.
\end{align*}
Denote by $\BV(h) = \BH^1(\Omega_h) + \BV_h^k \subset \BH^1(\Ct_h)$.
Next we give a discrete Poinc\'{a}re's inequality which holds on the curved domain $\Omega_h$.
\begin{lemma}\label{lem:poincare}
There exists a constant $C_P>0$ that is independent of $h$ such that
\begin{equation*}
\NLtwov[\Omega_h]{\Bv} \le C_P\|\Bv\|_{\Omega_h}
\quad \forall v\in \BH^1(\Ct_h).
\end{equation*}
\end{lemma}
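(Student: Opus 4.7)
The plan is to reduce the inequality to a standard discrete Poincaré–Friedrichs inequality on the polyhedral reference domain $\wh\Omega_h$ and then invoke the uniform bounds on $\BM_h$ from \eqref{Mh-bound} to transfer the estimate back to $\Omega_h$.

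\medskip\noindent\textbf{Step 1: Pull back by composition.} Given $\Bv\in\BH^1(\Ct_h)$, define $\hat\Bv := \Bv\circ\BM_h$ on $\wh\Omega_h$. Since $\BM_h$ maps $\wh\Ct_h$ elementwise to $\Ct_h$ (sending interior faces to interior faces and boundary faces to boundary faces), and since $\bbJ_{\BM_h}$ is uniformly bounded with bounded inverse by \eqref{Mh-bound}, we obtain $\hat\Bv\in\BH^1(\wh\Ct_h)$. Note that this is a \emph{plain composition}, not the Piola transform used in \eqref{ps}, so it applies to every element of $\BH^1(\Ct_h)$, not only to finite element functions.

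\medskip\noindent\textbf{Step 2: Norm equivalence under the change of variables.} By the chain rule, $\wh\nabla\hat\Bv = \bbJ_{\BM_h}^T(\nabla\Bv)\circ\BM_h$ on each $\wh K\in\wh\Ct_h$. Using $\mathrm{d}\Bx = J_d\,\mathrm{d}\hat\Bx$ together with the uniform bounds $\|\bbJ_{\BM_h}\|_{\BL^\infty(\wh\Omega_h)}, \|\bbJ_{\BM_h}^{-1}\|_{\BL^\infty(\Omega_h)}, |J_d|, |J_d^{-1}|\lesssim 1$ from \eqref{Mh-bound}, one gets
\begin{align*}
\NLtwov[\Omega_h]{\Bv}^2 &\sim \NLtwov[\wh\Omega_h]{\hat\Bv}^2,\qquad
\NLtwov[\Omega_h]{\nabla_h\Bv}^2 \sim \NLtwov[\wh\Omega_h]{\wh\nabla_h\hat\Bv}^2.
\end{align*}
For face terms, the relation \eqref{eq:dsds} and the bounds on $\bbJ_{\BM_h}$ give the surface-measure equivalence $\mathrm{d}s\sim\mathrm{d}\hat s$, and a standard argument using \eqref{Mh-bound} shows $h_F\sim h_{\wh F}$ for every pair of corresponding faces $F=\BM_h(\wh F)$. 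Because $\hat\Bv$ is defined by composition, $\jump{\hat\Bv}$ on $\wh F$ differs from $\jump{\Bv}$ on $F$ only through the tensor factor $\otimes\hat\Bn$ versus $\otimes\Bn$, whose magnitudes are comparable. Summing over faces yields
\begin{equation*}
\sum_{F\in\Cf_h}\langle h_F^{-1}\jump{\Bv},\jump{\Bv}\rangle_F
\sim \sum_{\wh F\in\wh\Cf_h}\langle h_{\wh F}^{-1}\jump{\hat\Bv},\jump{\hat\Bv}\rangle_{\wh F}.
\end{equation*}
Combining these equivalences gives $\|\Bv\|_{\Omega_h}\sim\|\hat\Bv\|_{\wh\Omega_h}$.

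\medskip\noindent\textbf{Step 3: Discrete Poincaré on the polyhedral reference domain.} The mesh $\wh\Ct_h$ is a shape-regular, quasi-uniform, body-fitted simplicial mesh of the Lipschitz polyhedron $\wh\Omega_h$. Because boundary faces contribute $\jump{\hat\Bv}=\hat\Bv\otimes\hat\Bn$ to the norm, the quantity $\|\cdot\|_{\wh\Omega_h}$ is precisely the IPDG Dirichlet energy norm. The classical discrete Poincaré–Friedrichs inequality for piecewise $H^1$ functions on polyhedral meshes (Brenner, \emph{SIAM J.~Numer.~Anal.}, 2003) therefore yields a constant $\wh C_P$, depending only on the shape regularity of $\wh\Ct_h$ and on $\wh\Omega_h$, such that
\begin{equation*}
\NLtwov[\wh\Omega_h]{\hat\Bv}\le \wh C_P\,\|\hat\Bv\|_{\wh\Omega_h}.
\end{equation*}

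\medskip\noindent\textbf{Step 4: Conclude.} Chaining the two $L^2$ equivalences from Step 2 with the reference-domain inequality from Step 3 gives $\NLtwov[\Omega_h]{\Bv}\lesssim\NLtwov[\wh\Omega_h]{\hat\Bv}\lesssim\|\hat\Bv\|_{\wh\Omega_h}\lesssim\|\Bv\|_{\Omega_h}$, with a constant $C_P$ independent of $h$.

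\medskip The main obstacle is the careful verification in Step 2 that the jump, face-diameter, and surface-measure factors transform with equivalent constants uniformly in $h$; this is where assumptions (A1)–(A2) and the $\BW^{k,\infty}$ bounds on $\bbJ_{\BM_h}$ and $\bbJ_{\BM_h}^{-1}$ in \eqref{Mh-bound} are essential. Once these equivalences are in place, the reduction to the polyhedral Poincaré–Friedrichs inequality is routine.
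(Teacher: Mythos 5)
Your proof is correct. The paper itself omits the argument entirely, saying only that it is ``similar to that of \cite[Lemma~2.1]{Arnold}'', i.e.\ to the discrete Poincar\'e--Friedrichs inequality on straight meshes; your write-up supplies the details by pulling back to the polyhedral reference domain $\wh\Omega_h$ via plain composition with $\BM_h$, proving equivalence of the broken norms using \eqref{Mh-bound} and \eqref{eq:dsds}, and then invoking the known polyhedral result. This is exactly the transfer mechanism the paper uses elsewhere (compare the estimates \eqref{Inf-9}--\eqref{Inf-11} in the proof of Lemma~\ref{lem:LBB}), and your observation that one must use the plain composition rather than the Piola map --- so that the argument covers all of $\BH^1(\Ct_h)$ and so that the traces of $\hat\Bv$ from the two sides of a face are literally the pullbacks of the traces of $\Bv$ --- is the right technical point. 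Two minor items worth a sentence each if you were to write this out fully: (i) in Step~3 the constant $\wh C_P$ a priori depends on the domain $\wh\Omega_h$, which varies with $h$; one should note that the family $\{\wh\Omega_h\}$ is uniformly Lipschitz and converges to $\Omega$, so the constant is uniform in $h$; (ii) the equivalence $h_F\sim h_{\wh F}$ follows from $\NLinfv[\wh\Omega_h]{\bbI-\bbJ_{\BM_h}}\lesssim h$ in \eqref{Mh-bound}, which you correctly cite but could state explicitly.
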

\begin{proof}.\quad
The proof is similar to that of \cite[Lemma~2.1]{Arnold}, and is omitted here.
\end{proof}

%%%%%%%%%%%%%%%%%%%%%%%%%%
%%%%%%%%%%%%%%%%%%%%%%%%%%%%
Define the set of the local basis functions on $K$ of $\BV_h^k$ by $\BV_h^k(K)$. It is worth mentioning that $\BV_h^k \subset \bigoplus_{K \in \Ct_h}\BV_h^k(K)$.
In the following lemma, we assert that $\nabla \varphi_h \in (L^2(K))^{3\times 3}, \forall \varphi_h \in \BV_h^k(K)$.
\begin{lemma}\label{lemmagrad}
For any $K = \BM_h(\widehat{K})$ with $\widehat{K} \in \widehat{\Ct}_h$, we have
\begin{equation}
\| \nabla \varphi_h\|_{\BL^2(K)}^2  \lesssim \|\widehat \nabla \widehat{\varphi}_h\|_{\BL^2(\widehat K)}^2 + \|\widehat{\varphi}_h\|_{\BL^2(\widehat K)}^2,
\end{equation}
where $\widehat{\varphi}_h \in \widehat{\BV}_h^k(\widehat{K})$ and $\varphi_h= J_d^{-1} \bbJ_{\BM_h} \widehat{\varphi}_h$.
\end{lemma}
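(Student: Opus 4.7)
The plan is to use the explicit chain rule identity for $\nabla\Bv_h$ given in \eqref{Inf-mid2}, combined with the uniform $L^{\infty}$ bounds on the Jacobian of $\BM_h$ provided by \eqref{Mh-bound}, and then transport everything to the reference element $\widehat{K}$ via the change of variables $\Bx=\BM_h(\hBx)$, $\mathrm{d}\Bx=J_d\,\mathrm{d}\hBx$.

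First I would substitute $\Bv_h=\varphi_h$ and $\hBv_h=\widehat{\varphi}_h$ into the second formula of \eqref{Inf-mid2}, so that pointwise on $K$
\[
\nabla\varphi_h(\Bx)
  = \tfrac{1}{J_d}\bbJ_{\BM_h}(\widehat{\nabla}\widehat{\varphi}_h)(\hBx)\bbJ_{\BM_h}^{-1}
  - \tfrac{1}{J_d^2}\bbJ_{\BM_h}\BC\bbJ_{\BM_h}^{-1}
  + \tfrac{1}{J_d}\BD\bbJ_{\BM_h}^{-1},
\]
where the auxiliary matrices $\BC$, $\BD$ are linear in $\widehat{\varphi}_h$ with coefficients given by first derivatives of $J_d$ and second derivatives of the components of $\BM_h$, respectively. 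Since $k\ge 1$, the assumption \eqref{Mh-bound} gives $\|\bbJ_{\BM_h}\|_{\BL^\infty(\widehat K)},\|\bbJ_{\BM_h}^{-1}\|_{\BL^\infty(K)}\lesssim 1$, which, together with $\BM_h\in W^{k+1,\infty}$ elementwise and the uniform lower bound on $J_d$ coming from \eqref{M-bound}--\eqref{domain:app} (for $h$ small enough, $J_d$ is bounded below by a positive constant independent of $h$), yields the pointwise estimate
\[
|\nabla\varphi_h(\Bx)|
  \lesssim |\widehat{\nabla}\widehat{\varphi}_h(\hBx)|
         + |\widehat{\varphi}_h(\hBx)|.
\]

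Squaring this inequality and integrating over $K$, the change of variables $\Bx=\BM_h(\hBx)$ produces a factor $J_d$ in the volume element, which is also bounded above by a constant thanks to \eqref{Mh-bound}. Therefore
\[
\int_K |\nabla\varphi_h|^2\,\mathrm{d}\Bx
  \lesssim \int_{\widehat K}\bigl(|\widehat{\nabla}\widehat{\varphi}_h|^2+|\widehat{\varphi}_h|^2\bigr)\,\mathrm{d}\hBx,
\]
which is exactly the claimed bound.

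The only delicate point is verifying that none of the prefactors carry hidden negative powers of the mesh size: the $\bbJ_{\BM_h}^{\pm 1}$ factors are $O(1)$ by \eqref{Mh-bound}, the entries of $\BC$ are $O(1)$ because $\partial J_d/\partial\hat x_j$ is a polynomial derivative bounded uniformly in $L^{\infty}(\widehat K)$ by \eqref{Mh-bound}, and the entries of $\BD$ are $O(1)$ because they are second-order partials of components of $\BM_h$, again controlled by $\|\bbJ_{\BM_h}\|_{W^{k,\infty}(\widehat K)}$. Thus the constants absorbed into $\lesssim$ depend only on the shape-regularity of $\widehat{\Ct}_h$ and on the fixed bounds in \eqref{M-bound}--\eqref{Mh-bound}, not on $h_K$. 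This is the main (mild) obstacle; once these uniform bounds are in place the estimate follows by one line of Cauchy--Schwarz and the Piola change of variables.
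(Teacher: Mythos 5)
Your proof is correct and follows essentially the same route as the paper's: both start from the chain-rule identity \eqref{Inf-mid2}, use the uniform bounds \eqref{Mh-bound} on $\bbJ_{\BM_h}$, its inverse, and the derivatives entering $\BC$ and $\BD$, and transfer the $L^2$ norm to $\widehat K$ via the Piola change of variables with $J_d$ bounded above and below. The only cosmetic difference is that you establish a pointwise bound before integrating, whereas the paper estimates the three terms directly at the level of $L^2(\widehat K)$ norms.
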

\begin{proof}.\quad
From \eqref{Mh-bound} and \eqref{Inf-mid2}, we  can derive
\begin{equation}\label{Inf-5}
\begin{aligned}
&\| \nabla \varphi_h\|_{\BL^2(K)}^2
=\Big\|\frac{ \sqrt{J_{d}}}{J_{d}} \Big(\bbJ_{\BM_h} \widehat{\nabla} \widehat{\varphi}_h \bbJ_{\BM_h}^{-1}-\frac{\bbJ_{\BM_h} \BC \bbJ_{\BM_h}^{-1}}{J_d}
+\BD \bbJ_{\BM_h}^{-1}\Big) \Big\|_{\BL^2(\widehat K)}^2\\
&\lesssim \Big\|\frac{\bbJ_{\BM_h} \widehat{\nabla} \widehat{\varphi}_h \bbJ_{\BM_h}^{-1}}{\sqrt{J_d}}
\Big\|_{\BL^2(\widehat K)}^2+\Big\|\frac{ \bbJ_{\BM_h} \BC \bbJ_{\BM_h}^{-1}}{J_d^\frac{3}{2}}\Big\|_{\BL^2(\widehat K)}^2
+\Big\|\frac{\BD \bbJ_{\BM_h}^{-1}}{\sqrt{J_d}}\Big\|_{\BL^2(\widehat K)}^2\\
& \lesssim \|\widehat \nabla \widehat{\varphi}_h\|_{\BL^2(\widehat K)}^2+\|\BC \|_{\BL^2(\widehat K)}^2+\|\BD \|_{\BL^2(\widehat K)}^2.
\end{aligned}
\end{equation}
Rewrite the matrix $\BD$ as
$\BD_{ij}= \sum_{k=1}^3 \frac{\partial}{\partial\hat{x}_j} \left(\frac{\partial x_i}{\partial \hat{x}_k}\right)\widehat{\varphi}_k$, $i,j=1,2,3$.
Then, we deduce that
\begin{equation}\label{Inf-7}
\|\BD_{ij}\|_{L^2(\widehat K)}=\Big\| \sum_{k=1}^3 \frac{\partial}{\partial\hat{x}_j} \left(\frac{\partial x_i}{\partial \hat{x}_k}\right)\widehat{\varphi}_k \Big\|_{L^2(\widehat K)}
\lesssim \|\widehat{\varphi}_h\|_{\BL^2(\widehat K)},
\end{equation}	
and
\begin{equation}\label{Inf-8}
\| \BC_{ij}\|_{L^2(\widehat K)}=\Big\|\widehat{\varphi}_{hi} \frac{\partial J_d}{\partial\hat{x}_{j}}\Big\|_{L^2(\widehat K)}
			\leq \Big\|\frac{\partial J_d}{\partial\hat{x}_{j}}\Big\|_{L^{\infty}(\widehat K)}\|\widehat{\varphi}_{hi}\|_{L^2(\widehat K)}
			\lesssim \|\widehat{\varphi}_h\|_{\BL^2(\widehat K)}.
\end{equation}
From \eqref{Inf-7} and \eqref{Inf-8}, the proof is completed.
\end{proof}

%%% trace lemma %%%%%%%%%%%
Let the basis functions of $\BV_h^k(K)$ be $\{\varphi_i\}_{i}^{n_u}$.
With Lemma~\ref{lemmagrad}, now we define an auxiliary space $\bbS_h$ as follows which will be used later on
\begin{equation}\label{space-auxi}
\bbS_h=  \left\{ \left.  \tau_h \in (L^2(\Omega_h))^{3\times 3} \right|   \left.\tau_h\right|_{K} \in \BS_h(K), ~~\forall  K \in \Ct_h \right\},
\end{equation}
where  $\BS_h(K)$ is defined by
\[\BS_h(K) =\mathrm{span}\left\{\nabla\varphi_1,\cdots, \nabla\varphi_{n_u} \right\}. \]
Note that functions in $\bbS_h$ are no longer piecewise polynomials.
The following key lemma for the trace inequality  will be frequently used in this paper.
\begin{lemma}\label{lemma-trace}
For any $K \in \Ct_h$, there holds
\begin{equation}\label{keytrace}
 h_{\widehat{K}} \|\tau_h\|_{\BL^2(\partial K)}^2  \lesssim  \|\tau_h\|_{\BL^2(K)}^2,\quad \forall \tau_h \in \BS_h(K).
\end{equation}
\end{lemma}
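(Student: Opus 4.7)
The plan is to pull the inequality back to the reference tetrahedron $\widehat K$ via $\BM_h$, use the explicit formula \eqref{Inf-mid2} to expose the polynomial structure, and then combine the standard polynomial trace inequality on $\widehat K$ with a scaling/compactness argument. By change of variables, \eqref{eq:dsds}, and the uniform upper and lower bounds on $J_d$ and $\|\bbJ_{\BM_h}^{-T}\hat\Bn\|$ provided by \eqref{Mh-bound} (valid for $h$ sufficiently small), we have
$$\|\tau_h\|^2_{\BL^2(K)} \asymp \|\tilde\tau_h\|^2_{\BL^2(\widehat K)}, \qquad \|\tau_h\|^2_{\BL^2(\partial K)} \asymp \|\tilde\tau_h\|^2_{\BL^2(\partial\widehat K)},$$
where $\tilde\tau_h := \tau_h\circ\BM_h$. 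The claim thus reduces to proving $h_{\widehat K}\|\tilde\tau_h\|^2_{\BL^2(\partial\widehat K)} \lesssim \|\tilde\tau_h\|^2_{\BL^2(\widehat K)}$.

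Writing $\tau_h=\nabla\varphi_h$ with $\varphi_h\in\BV_h^k(K)$ the Piola pushforward of a polynomial $\widehat\varphi_h\in\widehat{\BV}_h^k(\widehat K)$, formula \eqref{Inf-mid2} expresses $\tilde\tau_h$ as a linear combination of $\widehat\nabla\widehat\varphi_h$, $\BC$, and $\BD$ with coefficient matrices uniformly bounded in $L^\infty(\widehat K)$ by \eqref{Mh-bound}. Arguing on $\partial\widehat K$ exactly as in the proof of Lemma~\ref{lemmagrad} (in particular, bounding $|\BC|$ and $|\BD|$ pointwise by $|\widehat\varphi_h|$), I would obtain
$$\|\tilde\tau_h\|^2_{\BL^2(\partial\widehat K)} \lesssim \|\widehat\nabla\widehat\varphi_h\|^2_{\BL^2(\partial\widehat K)} + \|\widehat\varphi_h\|^2_{\BL^2(\partial\widehat K)}.$$
The standard polynomial trace inequality on the shape-regular simplex $\widehat K$ then yields $h_{\widehat K}\|\tilde\tau_h\|^2_{\BL^2(\partial\widehat K)} \lesssim \|\widehat\nabla\widehat\varphi_h\|^2_{\BL^2(\widehat K)} + \|\widehat\varphi_h\|^2_{\BL^2(\widehat K)}$.

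The crucial and hardest step is to bound this right-hand side by $\|\tilde\tau_h\|^2_{\BL^2(\widehat K)}$, i.e.\ to reverse the estimate of Lemma~\ref{lemmagrad}. A pointwise comparison fails because the linear map $\widehat\varphi_h\mapsto\tilde\tau_h$ has a nontrivial kernel: any $\widehat\varphi_h$ whose Piola pushforward is constant on $K$ produces $\tilde\tau_h=0$. I would resolve this by rescaling $\widehat K$ to a fixed unit-size reference tetrahedron $K_0$ via an affine homothety $F_{\widehat K}:K_0\to\widehat K$ with $|F_{\widehat K}'|\asymp h_{\widehat K}$, reducing the claim to a scale-invariant trace estimate on the finite-dimensional space spanned by the rescaled $\tilde\tau_h\circ F_{\widehat K}$. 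By \eqref{Mh-bound}, the rescaled maps $\BM_h\circ F_{\widehat K}$ form a bounded, and thus precompact, family in $C^k(K_0;\bbR^3)$ converging to the identity as $h\to 0$, and the rescaled polynomials $\widehat\varphi_h\circ F_{\widehat K}$ have degree at most $k$. Finite-dimensional norm equivalence combined with continuous dependence of the equivalence constants on the parameter $\BM_h\circ F_{\widehat K}$ (baselined at the identity, where the inequality reduces to the classical polynomial trace estimate) then delivers the required bound with a constant independent of $K$ and $h$; unscaling through $F_{\widehat K}$ and $\BM_h$ completes the proof.
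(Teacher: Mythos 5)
Your reduction to the reference element and the intermediate steps through \eqref{Inf-mid2} and the polynomial trace inequality are sound, but the proof does not close: the last step is a genuine gap, and you have in fact located it yourself. Having arrived at $h_{\widehat K}\|\tilde\tau_h\|^2_{\BL^2(\partial\widehat K)}\lesssim\|\widehat\nabla\widehat\varphi_h\|^2_{\BL^2(\widehat K)}+\|\widehat\varphi_h\|^2_{\BL^2(\widehat K)}$, you would need the reverse of Lemma~\ref{lemmagrad}, which is false because $\widehat\varphi_h\mapsto\tilde\tau_h$ has a kernel; you then abandon that chain and appeal to ``finite-dimensional norm equivalence combined with continuous dependence of the equivalence constants on the parameter.'' That assertion is precisely where the difficulty lives, and it is not justified. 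The best constant $C(G)=\sup\{\|\sigma\|_{\BL^2(\partial K_0)}/\|\sigma\|_{\BL^2(K_0)}:\ 0\neq\sigma\in\mathrm{Im}\,T_G\}$, where $T_G$ sends $\widehat\varphi\in\BP_k(K_0)$ to the rescaled stress, need not depend upper semicontinuously on $G$ when $\ker T_G$ varies --- and here it does vary: at the limit parameter $G=\mathrm{id}$ the kernel contains the constants, while for nearby $G$ it may be strictly smaller, so there are directions $\widehat\varphi$ in the limiting kernel for which $T_G\widehat\varphi$ is nonzero but tiny, and a bare compactness argument gives no control of the ratio $\|T_G\widehat\varphi\|_{\BL^2(\partial K_0)}/\|T_G\widehat\varphi\|_{\BL^2(K_0)}$ along such degenerating directions. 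A correct argument must either quotient out the kernel uniformly or avoid parametrizing the image space by $\widehat\varphi_h$ altogether.

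The paper does the latter and is considerably more direct: it applies the multiplicative trace inequality $\|\tau_h\|_{\BL^2(\partial K)}\lesssim\|\tau_h\|_{\BL^2(K)}^{1/2}\|\tau_h\|_{\BH^1(K)}^{1/2}$ on the physical element, pulls the $\BH^1$-norm back to $\widehat K$, and invokes an inverse estimate $\|\widehat\tau_h\|_{\BH^1(\widehat K)}\lesssim h_{\widehat K}^{-1}\|\widehat\tau_h\|_{\BL^2(\widehat K)}$ on the pulled-back finite-dimensional space, so the non-injective map $\widehat\varphi_h\mapsto\nabla\varphi_h$ is never inverted. If you want a fully self-contained justification (which also tightens the paper's own appeal to the polynomial inverse estimate for these non-polynomial functions), observe from \eqref{Inf-mid2} that $J_d^{2}\,\widehat\tau_h$ is a matrix polynomial of degree bounded in terms of $k$ only, while $J_d\asymp 1$ and $\|\widehat\nabla J_d\|_{\BL^\infty(\widehat K)}\lesssim 1$ by \eqref{Mh-bound}; applying the classical polynomial trace and inverse inequalities to $J_d^{2}\,\widehat\tau_h$ and dividing by the uniformly bounded weight yields \eqref{keytrace} directly, with no compactness argument at all.
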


\begin{proof}.\quad
By the trace theorem (Theorem 1.6.6 of \cite{bre2008}),  we have
\[\|\tau_h\|_{\BL^2(\partial K)} \lesssim \|\tau_h\|_{\BL^2(K)}^{1/2} \|\tau_h\|_{\BH^1(K)}^{1/2},\]
Defining $\widehat{\tau}_h = \tau_h \circ M_h$ and using \eqref{Mh-bound}, we have
\[\|\tau_h\|_{\BH^1(K)} \lesssim \|\widehat{\tau}_h\|_{\BH^1(\widehat{K})},\]
Note that $\BS_h(K)$ is a finite-dimensional space.
Due to the inverse estimate (Lemma 4.5.3 of \cite{bre2008}), we have
\[\|\widehat{\tau}_h\|_{\BH^1(\widehat{K})} \lesssim h_{\widehat{K}}^{-1}\|\widehat{\tau}_h\|_{\BL^2(\widehat{K})} \lesssim  h_{\widehat{K}}^{-1} \|\tau_h\|_{\BL^2(K)}.\]
This yields the desired estimate.
\end{proof}

Next we propose the parametric finite element for solving \eqref{Stokes} based on the IPDG formulation:
find $(\Bu_{h},p_{h})\in \bm V_{h,0}^{k}\times Q_{h}^{k-1}$ such that
\be\label{scheme}
\begin{aligned}
\nu \cdot a_{\mathrm{DG}}(\Bu_{h},\Bv_{h}) - (p_{h},\Div\Bv_{h})_{\Omega_{h}} &= (\bm f_h,\Bv_{h})_{\Omega_h}, ~~~\forall\Bv_{h}\in \bm V_{h,0}^k,\label{disv1}\\
(\Div\Bu_{h}, q_{h})_{\Omega_{h}} &= 0, ~~~~~~~~~~~~\quad\forall q_{h}\in  Q_{h}^{k-1},\label{disv2}
\end{aligned}
\ee
where $a_{\mathrm{DG}}(\cdot,\cdot)$ is a bilinear form on $\BV_{h,0}^{k}$, defined as
\be
\begin{aligned}
a_{\mathrm{DG}}(\Bu_h,\Bv_h) &=
(\nabla_h \Bu_h,\nabla_h \Bv_h)_{\Omega_h}
- \sum_{F\in\mathcal{F}_{h}}\int_{F} \AVG {\nabla_h\Bu_h} : \left\llbracket \Bv_h\right\rrbracket\mathrm{ds}\\
&-\sum_{F\in\mathcal{F}_{h}}\int_{F}\AVG{\nabla_h\Bv_h} : \left\llbracket \Bu_h\right\rrbracket\mathrm{ds}
+\sum_{F\in\mathcal{F}_{h}}\frac{\alpha}{h_{F}}\int_{F}\left\llbracket \Bu_h\right\rrbracket : \left\llbracket \Bv_h\right\rrbracket\mathrm{ds}.
\end{aligned}
\ee

Here the source term $\Bf_h$ will be
defined  in Theorem \ref{thm:uh} for the error estimate of $\Bu_h$.
The constant $\alpha>0$ is the stabilization parameter and is assumed to be large enough to ensure the coercivity of $a_{\mathrm{DG}}(\cdot,\cdot)$.
To derive the error estimate, we also need a modified lifting operator $\Cl_h: \Honev[\Ct_h]\to \bbS_h$.
For any face $F \in \Cf_h$, we define the lifting $\Cl_F(\Bw)\in\bbS_h$ of a function $\Bw\in \Honev[\Ct_h]$ by the solution of the following variational problem
\begin{equation}\label{liftingop}
\int_{\Omega_h} \Cl_F(\Bw) : \taubf_h ~\mathrm{d}\Bx=
\int_F \left\llbracket \Bw  \right\rrbracket : \avg{\taubf_h}\mathrm{ds}
\quad \forall\, \taubf_h \in \bbS_h,
\end{equation}
The lifting operator is slightly different from the one in \cite{Schotzau2003}, since
$F$ may be curved and $\taubf_h$ is not piecewise polynomial.
It is easy to see that, if $\Bw \in \BH^1_0(\Omega_h)$,
\begin{equation}\label{LFw}
\Cl_F(\Bw) = \mathbf{0}  \quad \forall\, F \in \Cf_h.
\end{equation}
The global lifting operator is then defined as
\ben
\Cl_h(\Bw)=\sum\limits_{F\in\Cf_h}\Cl_F(\Bw)\quad
\forall\,\Bw\in\Honev[\Ct_h].
\een
For any $\Bu,\Bv \in \BV(h)$, using the lifting operator, another bilinear form can be defined
\be
A_{\mathrm{DG}}(\Bu,\Bv) =
(\nabla_h \Bu,\nabla_h \Bv)_{\Omega_h}-
(\nabla_h \Bu,\Cl_h(\Bv))_{\Omega_h} - (\nabla_h \Bv,\Cl_h(\Bu))_{\Omega_h}
+\alpha \sum_{F\in\mathcal{F}_{h}} \langle h_{F}^{-1}\jump{\Bu},\jump{\Bv}\rangle_{F}.
\ee
We remark that if $\Bu_h$ and $\Bv_h$ belong to $\BV_h^k$, we  have
\be
a_{\mathrm{DG}}(\Bu_{h},\Bv_{h}) = A_{\mathrm{DG}}(\Bu_{h},\Bv_{h}).
\ee

\begin{lemma}\label{divfree}
The solution $\Bu_h$ of the discrete problem \eqref{scheme} is exactly divergence-free, namely $\Div \Bu_{h}=0$ in $\Omega_{h}$.
\end{lemma}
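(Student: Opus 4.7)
The plan is to reduce the statement on the curved physical mesh to one on the reference straight mesh via the Piola transformation, and then exploit the fact that the pressure space $Q_h^{k-1}$ is exactly the Piola pullback partner of $\wh Q_h^{k-1}$.

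First I would use the chain-rule identity from \eqref{Inf-mid2},
\begin{equation*}
(\Div \Bu_h)\circ \BM_h \;=\; \frac{1}{J_d}\,\wh\Div\,\hBu_h,
\end{equation*}
together with the change of variables $\Bx = \BM_h(\hBx)$, $\D \Bx = J_d\,\D\hBx$, to rewrite the discrete divergence constraint in \eqref{scheme} on the reference domain: for every $\hat q_h\in\wh Q_h^{k-1}$ and its corresponding $q_h = \hat q_h\circ\BM_h^{-1}\in Q_h^{k-1}$,
\begin{equation*}
0 \;=\; \int_{\Omega_h}(\Div\Bu_h)\,q_h\,\D\Bx
   \;=\; \int_{\wh\Omega_h}\frac{\wh\Div\,\hBu_h}{J_d}\,\hat q_h\,J_d\,\D\hBx
   \;=\; \int_{\wh\Omega_h}(\wh\Div\,\hBu_h)\,\hat q_h\,\D\hBx.
\end{equation*}
Thus the constraint translates verbatim into the reference variables: $\wh\Div\,\hBu_h$ is $L^2$-orthogonal to $\wh Q_h^{k-1}$.

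Next I would verify that $\wh\Div\,\hBu_h$ itself lies in $\wh Q_h^{k-1}$. Because $\hBu_h|_{\wh K}\in\BP_k(\wh K)$, its divergence is piecewise in $P_{k-1}(\wh K)$, meeting the polynomial condition in \eqref{pps}. For the zero-mean condition, the equivalence \eqref{equiv-1} gives $\hBu_h\in\wh\BV_{h,0}^k\subset\BH_0(\wh\Div,\wh\Omega_h)$, so the divergence theorem on the straight polyhedral domain $\wh\Omega_h$ yields
\begin{equation*}
\int_{\wh\Omega_h}\wh\Div\,\hBu_h\,\D\hBx \;=\; \int_{\wh\Gamma_h}\hBu_h\cdot\hat\Bn\,\D\hat s \;=\;0,
\end{equation*}
so $\wh\Div\,\hBu_h\in L^2_0(\wh\Omega_h)$ and consequently $\wh\Div\,\hBu_h\in\wh Q_h^{k-1}$.

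Finally, I would choose $\hat q_h = \wh\Div\,\hBu_h$ in the reference-domain orthogonality relation obtained in the first step. This gives $\|\wh\Div\,\hBu_h\|_{L^2(\wh\Omega_h)}^2=0$, hence $\wh\Div\,\hBu_h\equiv 0$ on $\wh\Omega_h$; by $(\Div\Bu_h)\circ\BM_h = J_d^{-1}\wh\Div\,\hBu_h$ and the invertibility of $\BM_h$, we conclude $\Div\Bu_h\equiv 0$ in $\Omega_h$. The whole argument is essentially algebraic; no obstacle beyond being careful with the Piola identity and with the boundary condition $\hBu_h\cdot\hat\Bn=0$ on $\wh\Gamma_h$, the latter being exactly what guarantees $\wh\Div\,\hBu_h$ has vanishing mean so that it is an admissible test function in $\wh Q_h^{k-1}$.
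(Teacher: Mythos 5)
Your proposal is correct and follows essentially the same route as the paper: both transfer the constraint to the reference domain via the Piola identity $(\Div\Bu_h)\circ\BM_h = J_d^{-1}\wh\Div\,\hBu_h$ and then test with the divergence itself (the paper's choice $q_h = J_d\Div\Bu_h\circ\BM_h^{-1}$ is exactly your $\hat q_h=\wh\Div\,\hBu_h$ read through the correspondence \eqref{psp}). Your explicit verification of the zero-mean condition via the divergence theorem on $\wh\Omega_h$ is a detail the paper leaves implicit, and it is a worthwhile addition.
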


\begin{proof}. \quad
	First, let's define  the discrete kernel space of $\bm V_{h,0}^k$ as follows
	\begin{equation*}
	\Upsilon_{h,0}:=\{\left.\Bw_{h}\in\bm V_{h,0}^k\right| (\Div\Bw_{h},q_{h})_{\Omega_{h}}=0,~~\forall q_{h}\in Q_{h}^{k-1}\},
	\end{equation*}
    We then know that the discrete solution $\Bu_h \in \Upsilon_{h,0}$.
    For any $\Bw_h \in \Upsilon_{h,0}$, according to the relationship given by \eqref{Inf-mid2}, and by making an appropriate choice
	\[q_h=J_d \Div\Bw_h  = \widehat{\Div}~\hBw_{h}\circ M_h^{-1} \in Q_h^{k-1},\]
	We can deduce that $\widehat{\Div}~\hBw_{h} = 0$ in $\wh\Omega_h$.
    Therefore, $\Div\Bw_h = 0$ in $\Omega_h$. Namely
    \begin{equation*}
   \Upsilon_{h,0}=\{\left.\Bw_{h}\in \bm V_{h,0}^k\right| \Div\Bw_{h}=0\}.
    \end{equation*}
	Then, we complete the proof.
\end{proof}

\subsection{Well-posedness of the discrete problem}

To prove the well-posedness of \eqref{scheme}, we first present some preliminary but important results.

\begin{lemma}\label{lem:lift}
For any face $F\in \Cf_h$, there holds
\[
\|\Cl_F(\Bw)\|_{\BL^2(\Omega_h)} \lesssim  h^{-1/2}\NLtwov[F]{\jump{\Bw}}
\quad\forall\Bw \in \Honev[\Ct_h].
\]
\end{lemma}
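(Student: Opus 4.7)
The plan is to exploit the defining variational identity for the lifting by using the test function $\taubf_h=\Cl_F(\Bw)$ itself, and then to convert the resulting face integral into a bulk $L^2$-norm via the discrete trace inequality of Lemma~\ref{lemma-trace}.

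First, I would observe that $\Cl_F(\Bw)$ lies in $\bbS_h$ by construction, so it is an admissible test function in \eqref{liftingop}. Setting $\taubf_h=\Cl_F(\Bw)$ yields the energy-type identity
\[
\|\Cl_F(\Bw)\|_{\BL^2(\Omega_h)}^2 = \int_F \jump{\Bw}:\avg{\Cl_F(\Bw)}\,\mathrm{ds}.
\]
A Cauchy--Schwarz inequality on $F$ then gives
\[
\|\Cl_F(\Bw)\|_{\BL^2(\Omega_h)}^2 \le \|\jump{\Bw}\|_{\BL^2(F)}\,\|\avg{\Cl_F(\Bw)}\|_{\BL^2(F)}.
\]

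Next I would bound the averaged term. Let $K^+,K^-$ denote the two elements sharing $F$ (with the usual convention that only $K^+$ exists if $F$ is a boundary face). By the definition of the average and the triangle inequality,
\[
\|\avg{\Cl_F(\Bw)}\|_{\BL^2(F)}^2 \lesssim \|\Cl_F(\Bw)|_{K^+}\|_{\BL^2(F)}^2+\|\Cl_F(\Bw)|_{K^-}\|_{\BL^2(F)}^2.
\]
Since $\Cl_F(\Bw)|_{K^\pm}\in\BS_h(K^\pm)$, Lemma~\ref{lemma-trace} applies on each side and produces
\[
\|\Cl_F(\Bw)|_{K^\pm}\|_{\BL^2(F)}^2 \lesssim h_{\widehat{K}^\pm}^{-1}\|\Cl_F(\Bw)\|_{\BL^2(K^\pm)}^2.
\]
Using the shape regularity of $\widehat\Ct_h$ together with \eqref{Mh-bound} (which implies $h_{\widehat{K}^\pm}\simeq h$) and bounding the sum over $K^+,K^-$ by the full $\Omega_h$-norm, I obtain
\[
\|\avg{\Cl_F(\Bw)}\|_{\BL^2(F)} \lesssim h^{-1/2}\,\|\Cl_F(\Bw)\|_{\BL^2(\Omega_h)}.
\]

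Substituting this bound into the Cauchy--Schwarz inequality and dividing by $\|\Cl_F(\Bw)\|_{\BL^2(\Omega_h)}$ (the trivial case of a vanishing lifting being immediate) yields the claimed estimate
\[
\|\Cl_F(\Bw)\|_{\BL^2(\Omega_h)} \lesssim h^{-1/2}\,\|\jump{\Bw}\|_{\BL^2(F)}.
\]
The routine parts are standard; the only place requiring care is the trace step, because $\Cl_F(\Bw)$ is not piecewise polynomial on the curved mesh. This is precisely the obstacle already resolved by Lemma~\ref{lemma-trace}, whose proof passes through the Piola-pullback to the straight reference element and invokes a finite-dimensional inverse estimate there. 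Once that lemma is in hand, the argument above requires no additional structure beyond the shape-regularity of $\widehat\Ct_h$ and the Jacobian bounds \eqref{Mh-bound}, so the proof is short and essentially the same as the classical Brezzi--Douglas--Marini lifting estimate on polyhedral meshes.
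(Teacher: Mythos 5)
Your proposal is correct and follows essentially the same route as the paper: test the defining identity \eqref{liftingop} with $\taubf_h=\Cl_F(\Bw)$, apply Cauchy--Schwarz on $F$, and control the average of the lifting on $F$ by its bulk $\BL^2$-norm via the discrete trace inequality of Lemma~\ref{lemma-trace} together with shape regularity. The paper merely compresses the Cauchy--Schwarz and trace steps into a single displayed inequality; the substance is identical.
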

\begin{proof}.\quad
The proof is similar to the Lemma 7.2 of \cite{Schotzau2003}. Here we give the derivation for completeness.
Because $\Cl_F(\Bv) \in \bbS_h$, let $\tau_h = \Cl_F(\Bv)$. Then
\[\|\Cl_F(\Bv)\|_{\BL^2(\Omega_h)}^2 = \int_{\Omega_h} {\Cl_F(\Bv):\tau_h} ~\mathrm{d}\Bx
= \int_{F} {\left\llbracket \Bv\right\rrbracket : \AVG{\tau_h}} ~\mathrm{ds}, \]
From the trace inequality in Lemma \ref{lemma-trace}, we have
\begin{align*}
\int_{F} {\left\llbracket \Bv\right\rrbracket : \AVG{\tau_h}} ~\mathrm{ds}
&\lesssim \left( \sqrt{h_{\widehat{F}}^{-1}\int_F {|\left\llbracket \Bv\right\rrbracket|^2} ~\mathrm{ds}}\right)
\left(\sqrt{\sum_{K \in \Ct_h} \|\tau_h\|_{\BL^2(K)}^2}\right)\\
& =  \left( \sqrt{h_{\widehat{F}}^{-1}\int_F {|\left\llbracket \Bv\right\rrbracket|^2}~ \mathrm{ds}}\right) \|\Cl_F(\Bv)\|_{L^2(\Omega_h)}.
\end{align*}
The desired estimate is established now.
\end{proof}

\begin{lemma}\label{lem:coer}
Suppose $\alpha$ is sufficiently large but independent of $h$.
Then, there exists  a constant $\omega>0$ independent of $h$, such that
the bilinear form $A_{\mathrm{DG}}(\cdot,\cdot)$ satisfies
\begin{equation}
\big| A_{\mathrm{DG}}(\Bu,\Bv)\big| \leq \omega\|\Bu\|_{\Omega_h}\|\Bv\|_{\Omega_h}, \quad A_{\mathrm{DG}}(\Bv,\Bv) \geq \frac{1}{2}\|\Bv\|_{\Omega_h}^2,
\quad\forall\Bu, \Bv\in \BV(h).
\end{equation}
\end{lemma}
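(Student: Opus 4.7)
The plan is to derive both inequalities by the standard IPDG manipulation, where the role of Lemma~\ref{lem:lift} is to convert every cross term involving the lifting operator $\Cl_h$ into something controlled by the jump part of $\|\cdot\|_{\Omega_h}$. First I would establish the global lifting estimate
\[
\|\Cl_h(\Bw)\|_{\BL^2(\Omega_h)}^2 \;\lesssim\; \sum_{F\in\Cf_h} h_F^{-1}\|\jump{\Bw}\|_{\BL^2(F)}^2 \;\le\; \|\Bw\|_{\Omega_h}^2
\]
by combining Lemma~\ref{lem:lift} with a finite-overlap argument: testing \eqref{liftingop} against a $\taubf_h \in \bbS_h$ supported on a single element $K$ not adjacent to $F$ forces $\Cl_F(\Bw)|_K = 0$, so each local lifting $\Cl_F(\Bw)$ is supported on the (at most two) tetrahedra sharing $F$. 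Shape regularity then caps the number of overlapping supports per element, and the triangle inequality closes the bound.

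For continuity, I would bound each of the four contributions to $A_{\mathrm{DG}}(\Bu,\Bv)$ by Cauchy--Schwarz. The volume term is dominated by $\|\nabla_h\Bu\|_{\BL^2(\Omega_h)}\|\nabla_h\Bv\|_{\BL^2(\Omega_h)}$; for the two symmetric lifting terms, Cauchy--Schwarz combined with the global lifting estimate above gives a constant multiple of $\|\Bu\|_{\Omega_h}\|\Bv\|_{\Omega_h}$; the penalty term is bounded directly by $\alpha\|\Bu\|_{\Omega_h}\|\Bv\|_{\Omega_h}$ via Cauchy--Schwarz on the weighted jump inner product. Collecting the implicit constants into a single $\omega$ depending only on mesh regularity and $\alpha$ yields the continuity bound.

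For coercivity, I would compute
\[
A_{\mathrm{DG}}(\Bv,\Bv) = \|\nabla_h\Bv\|_{\BL^2(\Omega_h)}^2 - 2(\nabla_h\Bv,\Cl_h(\Bv))_{\Omega_h} + \alpha\sum_{F\in\Cf_h}h_F^{-1}\|\jump{\Bv}\|_{\BL^2(F)}^2
\]
and apply Young's inequality with parameter $\epsilon>0$ to the cross term, invoking the global lifting estimate to obtain
\[
A_{\mathrm{DG}}(\Bv,\Bv) \;\ge\; (1-\epsilon)\|\nabla_h\Bv\|_{\BL^2(\Omega_h)}^2 + (\alpha - C\epsilon^{-1})\sum_{F\in\Cf_h}h_F^{-1}\|\jump{\Bv}\|_{\BL^2(F)}^2
\]
for a mesh-independent $C>0$. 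Choosing $\epsilon = 1/2$ and then fixing $\alpha$ large enough that $\alpha - 2C \ge 1/2$ produces $A_{\mathrm{DG}}(\Bv,\Bv) \ge \tfrac12\|\Bv\|_{\Omega_h}^2$, which is the advertised lower bound.

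The only delicate step is the passage from the local lifting bound of Lemma~\ref{lem:lift} to the global one, since the elements of $\bbS_h$ are not polynomial in physical coordinates, so routine piecewise-polynomial DG shortcuts are unavailable. The support and finite-overlap observations, however, rely only on the element-wise definition of $\bbS_h$ and on the freedom to pick test tensors $\taubf_h$ independently on each element, so the argument goes through as sketched. All remaining manipulations are standard Cauchy--Schwarz and Young inequalities, and no further obstacle arises.
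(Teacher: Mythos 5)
Your proposal is correct and follows essentially the same route as the paper: Cauchy--Schwarz for continuity, Young's inequality combined with the lifting bound of Lemma~\ref{lem:lift} for coercivity, and choosing $\alpha$ large enough to absorb the jump contribution. Your local-support and finite-overlap argument for passing from the per-face estimate to the global bound $\|\Cl_h(\Bv)\|_{\BL^2(\Omega_h)}^2 \lesssim \sum_{F}h_F^{-1}\|\jump{\Bv}\|_{\BL^2(F)}^2$ is a valid (and slightly more explicit) justification of a step the paper states without detail.
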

\begin{proof}.\quad
For $\Bu, \Bv\in \BV(h)$, from the Cauchy-Schwarz inequality, using the results in Lemma \ref{lem:lift}, there exists a constant $\omega$ such that
\[	| A_{\mathrm{DG}}(\Bu,\Bv)| \leq \omega \|\Bu\|_{\Omega_h}\|\Bv\|_{\Omega_h}.\]
For $\Bv\in \BV(h)$ we have
\[2 (\nabla_h \Bv,\Cl_h(\Bv))_{\Omega_h} \leq \frac{1}{2} \NLtwov[\Omega_h]{\nabla_h \Bv}^2  + 2 \NLtwov[\Omega_h]{\Cl_h(\Bv)}^2,\]
By Lemma \ref{lem:lift}, there exists a constant $\widetilde{C}$ such that
\begin{equation}
\NLtwov[\Omega_h]{\Cl_h(\Bv)}^2 \leq \widetilde{C} \sum_{F\in\mathcal{F}_{h}} \langle h_{F}^{-1}\jump{\Bv},\jump{\Bv}\rangle_{F},
\end{equation}
Then we have
\begin{equation*}\label{mideq1}
A_{\mathrm{DG}}(\Bv,\Bv)  \geq \frac{1}{2} \int_{\Omega_h}|\nabla_h\Bv|^2~\mathrm{d}\Bx
+(\alpha-2\widetilde{C}) \sum_{F\in\mathcal{F}_{h}}\frac{1}{h_{F}}\int_{F}|\left\llbracket\Bv\right\rrbracket|^2 \mathrm{ds}.
\end{equation*}
Thus we can complete the proof by assuming  $\alpha$ is larger than $2\widetilde{C}+1/2$.
\end{proof}

For the discrete saddle-point problem, we also require the discrete inf-sup condition or the Lady\v{z}enskaja-Babu\v{s}ka-Brezzi (LBB) condition on $\bm V_{h,0}^k\times Q_h^{k-1}$.
\begin{lemma}\label{lem:LBB}
There exists a constant $C_{\rm inf}> 0$ that is independent of $h$, such that
\begin{equation*}
\sup_{\bm{0}\neq\Bu_{h}\in\bm V_{h,0}^k}
\frac{(\Div \Bu_{h},q_{h})_{\Omega_{h}}}{\|\Bu_{h}\|_{\Omega_h}}
	\geq C_{\rm inf} \|q_{h}\|_{L^2(\Omega_{h})}
\quad \forall\, q_{h}\in Q_h^{k-1}.
\end{equation*}
\end{lemma}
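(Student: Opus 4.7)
The strategy is to reduce the curved-domain inf-sup condition to the classical BDM inf-sup condition on the straight reference mesh $\wh\Ct_h$ by means of Piola's transformation. Since the reference pair $\wh\BV_{h,0}^k\times\wh Q_h^{k-1}$ is a standard inf-sup stable BDM-$P_{k-1}$ pair on the polyhedral domain $\wh\Omega_h$, the DG inf-sup condition
\[
\sup_{\bm 0\neq\hBu_h\in\wh\BV_{h,0}^k}
\frac{(\wh\Div\hBu_h,\hat q_h)_{\wh\Omega_h}}{\|\hBu_h\|_{\wh\Omega_h}}
\;\gtrsim\; \|\hat q_h\|_{L^2(\wh\Omega_h)}
\qquad \forall\,\hat q_h\in\wh Q_h^{k-1}
\]
can be invoked as a known result (cf.\ \cite{Cockburn2007}). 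The task is to transfer this estimate to the curved pair with constants independent of $h$.

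\textbf{Main steps.} First, for a given $q_h\in Q_h^{k-1}$, I set $\hat q_h:=q_h\circ\BM_h$, which by definition \eqref{psp} belongs to $\wh Q_h^{k-1}$. Applying the reference inf-sup condition yields some $\hBu_h\in\wh\BV_{h,0}^k$ with $(\wh\Div\hBu_h,\hat q_h)_{\wh\Omega_h}\gtrsim\|\hat q_h\|^2_{L^2(\wh\Omega_h)}$ and $\|\hBu_h\|_{\wh\Omega_h}\lesssim\|\hat q_h\|_{L^2(\wh\Omega_h)}$. Next, I push $\hBu_h$ forward via Piola's transformation \eqref{ps} to obtain $\Bu_h\in\BV_{h,0}^k$ (the equivalence \eqref{equiv-1} preserves the boundary condition). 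Using the divergence identity in \eqref{Inf-mid2} together with the change of variables $d\Bx=J_d\,d\hBx$,
\[
(\Div\Bu_h,q_h)_{\Omega_h}
= \int_{\wh\Omega_h} \bigl(J_d^{-1}\wh\Div\hBu_h\bigr)\,\hat q_h\,J_d\,d\hBx
= (\wh\Div\hBu_h,\hat q_h)_{\wh\Omega_h}
\;\gtrsim\; \|\hat q_h\|^2_{L^2(\wh\Omega_h)}.
\]
The uniform bounds on $\bbJ_{\BM_h}$ and $J_d$ from \eqref{Mh-bound}, valid for $h$ sufficiently small, give the norm equivalence $\|\hat q_h\|_{L^2(\wh\Omega_h)}\sim\|q_h\|_{L^2(\Omega_h)}$.

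\textbf{Norm transfer for the velocity.} To close the estimate, I need $\|\Bu_h\|_{\Omega_h}\lesssim\|\hBu_h\|_{\wh\Omega_h}$. The broken-gradient contribution is handled element-by-element by Lemma~\ref{lemmagrad}, which yields
\[
\|\nabla_h\Bu_h\|^2_{\Ltwov[\Omega_h]}
\lesssim \|\wh\nabla_h\hBu_h\|^2_{\Ltwov[\wh\Omega_h]} + \|\hBu_h\|^2_{\Ltwov[\wh\Omega_h]},
\]
and the $L^2$-term is absorbed by a reference-mesh analogue of the discrete Poincar\'e inequality (Lemma~\ref{lem:poincare}), itself a special case of the same argument on the polyhedral domain. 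For the jump contribution, I use the face Jacobian \eqref{eq:dsds} together with the Piola formula \eqref{ps} to rewrite $h_F^{-1}\|\jump{\Bu_h}\|_{\BL^2(F)}^2$ on each face $F=\BM_h(\wh F)$ as a reference integral. Writing $\bbJ_{\BM_h}=\bbI+(\bbJ_{\BM_h}-\bbI)$ with $\|\bbJ_{\BM_h}-\bbI\|_{\Linfv[\wh\Omega_h]}\lesssim h$ from \eqref{Mh-bound}, the leading term is bounded by $h_{\wh F}^{-1}\|\jump{\hBu_h}\|_{\BL^2(\wh F)}^2$, while the remainder is of order $h$ relative to the $\BL^2$-norm of $\hBu_h$ on neighboring elements and hence can be absorbed. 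Combining the two contributions gives $\|\Bu_h\|_{\Omega_h}\lesssim\|\hBu_h\|_{\wh\Omega_h}$, and dividing through completes the proof with $C_{\rm inf}$ independent of $h$.

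\textbf{Anticipated obstacle.} The delicate point is the jump-transfer step: $\bbJ_{\BM_h}$ is piecewise polynomial and generally discontinuous across interior faces of $\Ct_h$, so the Piola push-forward does not map $\jump{\hBu_h}$ isometrically to $\jump{\Bu_h}$. The resolution is precisely the smallness estimate $\|\bbJ_{\BM_h}-\bbI\|_{\Linfv[\wh\Omega_h]}\lesssim h$, which is nontrivial only on boundary elements (for interior elements with $k\geq 2$ the mapping is often the identity), combined with scaled trace inequalities such as the one in Lemma~\ref{lemma-trace} to bound the cross terms. All other ingredients---the reference BDM inf-sup, the change of variables for the divergence, and the uniform bounds on $J_d$---are routine consequences of the setup already established.
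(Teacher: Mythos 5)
Your proposal is correct and follows essentially the same route as the paper's proof: reduce to the known BDM inf-sup condition on the polyhedral reference domain, preserve the numerator exactly under the Piola transformation, and establish the norm transfer $\|\Bu_h\|_{\Omega_h}\lesssim\|\hBu_h\|_{\wh\Omega_h}$ via Lemma~\ref{lemmagrad} for the broken gradient and the smallness $\|\bbJ_{\BM_h}-\bbI\|_{\Linfv[\wh\Omega_h]}\lesssim h$ combined with a polynomial trace inequality and the discrete Poincar\'e inequality for the jump terms. The ``anticipated obstacle'' you identify (the non-isometric jump transfer across faces) is precisely the point the paper handles with estimate \eqref{jumpestimate} and the subsequent absorption argument in \eqref{Inf-10}--\eqref{Inf-11}.
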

\begin{proof}.\quad
It is known that the discrete inf-sup condition holds for $\widehat{\bm V}_{h,0}^{k}\times\widehat{	Q}^{k-1}_h$,
which are defined on the polyhedral reference domain $\widehat\Omega_{h}$.
Specifically, there exists a positive constant $\hat{\beta}$ such that  the following estimate holds for all $ \hat{q}_h\in\widehat{Q}^{k-1}_h$ (please refer to \cite{Greif2010} for detail),
	\begin{equation}\label{Inf-mid1}
	\begin{aligned}
	\hat{\beta}\|\hat{q}_h\|_{L^{2}(\widehat{\Omega}_h)} &\le
     \sup_{\bm{0}\neq\hBu_{h}\in\widehat{\bm V}_{h,0}^{k}}\frac{\int_{\widehat{\Omega}_h}\widehat{\Div}~\hBu_h\cdot \hat{q}_h
     ~\mathrm{d}\hBx}{\|\hBu_h\|_{\widehat{\Omega}_h}},
	\end{aligned}
	\end{equation}
	
We can conclude from Lemma \ref{lemmagrad} and \cite[Lemma~2.1]{Arnold} that
\begin{equation}\label{Inf-9}
\sum_{K\in\mathcal{T}_{h}}\| \nabla \Bu_h\|_{\BL^2(K)}^2
\lesssim \sum_{\widehat{K}\in\widehat{\mathcal{T}}_{h}}\left(\|\widehat \nabla \hBu_h\|_{\BL^2(\widehat K)}^2+\|\hBu_h \|_{\BL^2(\widehat K)}^2\right)\\
\lesssim  \|\hBu_h\|_{\widehat{\Omega}_h}^2,
\end{equation}	
Using the definition of the jump operator and Piola's transformation, firstly we have
\[|\left\llbracket\bm u_h\right\rrbracket|^2 = |\Bu_h^{+} - \Bu_h^{-}|^2 =
\left|\frac{\bbJ_{\BM_h}^{+}}{J_{d}^{+}}\hBu_h^{+} - \frac{\bbJ_{\BM_h}^{-}}{J_{d}^{-}}\hBu_h^{-}\right|^2,\]
From \eqref{Mh-bound},  we have ($\bbI$ is the identity matrix)
\begin{equation}\label{Mhjacobione}
\|\bbJ_{\BM_h} - \bbI\|_{\BL^\infty(\widehat{\Omega}_h)} \lesssim h,
\end{equation}
Then using \eqref{Mhjacobione}, we can prove
\begin{equation}\label{jumpestimate}
\begin{aligned}
|\left\llbracket\bm u_h\right\rrbracket|^2 &\lesssim \Big\|\frac{\bbJ_{\BM_h}^{+}}{J_{d}^{+}} - \bbI\Big\|^2\cdot |\hBu_h^{+}|^2
+ \Big\|\frac{\bbJ_{\BM_h}^{-}}{J_{d}^{-}}-\bbI\Big\|^2 \cdot |\hBu_h^{-}|^2 + |\hBu_h^{+}-\hBu_h^{-}|^2\\
& \lesssim h^2\left( |\hBu_h^{+}|^2  +  |\hBu_h^{-}|^2 \right) + |\hBu_h^{+}-\hBu_h^{-}|^2,
\end{aligned}
\end{equation}
where $\|\cdot\|$ is a matrix's induced 2-norm and $|\cdot|$ is the $l^2$-norm of a vector.
Note that $\mathrm{ds} = J_{d} \|\bbJ_{\BM_h}^{-T} \hat{\Bn}\|~\mathrm{d\hat{s}}$.
From \eqref{jumpestimate} and \eqref{Mh-bound}, we have the following estimates
	\begin{equation}\label{Inf-10}
		\begin{aligned}
			\sum_{{F}\in{\mathcal{F}}_h} &h_{\widehat{F}}^{-1}\|\left\llbracket\bm u_h\right\rrbracket\|_{\BL^2(F)}^{2}
			\lesssim \sum_{\widehat{F}\in\widehat{\mathcal{F}}_h} h \int_{\widehat F}
             \left( |\hBu_h^{+}|^2  +  |\hBu_h^{-}|^2 \right)
             J_{d} \|\bbJ_{\BM_h}^{-T} \hat{\Bn}\|~\mathrm{d\hat{s}}\\
             &+ \sum_{\widehat{F}\in\widehat{\mathcal{F}}_h} h_{\widehat{F}}^{-1} \int_{\widehat F}
          |\hBu_h^{+}-\hBu_h^{-}|^2
             J_{d} \|\bbJ_{\BM_h}^{-T} \hat{\Bn}\|~\mathrm{d\hat{s}}\\
			& \lesssim \sum_{\widehat{F}\in\widehat{\mathcal{F}}_h} h \int_{\widehat F}
             \left( |\hBu_h^{+}|^2  +  |\hBu_h^{-}|^2 \right)~\mathrm{d\hat{s}} + \sum_{\widehat F \in \widehat {\mathcal{F}}_h} h_{\widehat F}^{-1} \|\left\llbracket\hBu_h  \right\rrbracket\|_{\BL^2(\widehat F)}^2.
		\end{aligned}
	\end{equation}
Recall the trace inequality for polynomials in order to tackle the first term,
\[h_{\widehat{K}} \|\hBu_h\|_{\BL^2(\partial\widehat{K})}^2 \lesssim \|\hBu_h\|_{\BL^2(\widehat{K})}^2,\]
consequently, from \eqref{Inf-10} and the discrete Poinc\'{a}re's inequality on the straight meshes \cite{Arnold} , we get
\begin{equation} \label{Inf-11}
\sum_{{F}\in{\mathcal{F}}_h} h_{\widehat{F}}^{-1}\|\left\llbracket\bm u_h\right\rrbracket\|_{\BL^2(F)}^{2}
 \lesssim \|\hBu_h\|_{\BL^2(\widehat{\Omega}_h)}^2
+\sum_{\widehat F \in \widehat {\mathcal{F}}_h} h_{\widehat F}^{-1} \|\left\llbracket\hBu_h  \right\rrbracket\|_{\BL^2(\widehat F)}^2
\lesssim \|\hBu_h\|_{\widehat{\Omega}_h}^2,
\end{equation}

Thus, by \eqref{Inf-mid1} which holds on the straight mesh of $\widehat{\Omega}_{h}$,
along with the	estimates derived from  \eqref{Inf-9},  \eqref{Inf-11}, and the Lemma \ref{lem:poincare}, we can derive the following estimate
\begin{equation*}
		\hat{\beta}\|\hat{q}_h\|_{L^2(\widehat{\Omega}_h)}
	\le \sup_{\bm0\neq\hBu_{h}\in\widehat{\bm V}_{h,0}^{k}}\frac{\int_{\widehat{\Omega}_h}\widehat{\Div}~\hBu_h\cdot \hat{q}_h ~\mathrm{d}\hBx}{\|\hBu_h\|_{\widehat{\Omega}_h}}\\
	\leq C_1 \sup_{\bm0\neq\Bu_{h}\in\bm V_{h,0}^{k}}\frac{\int_{\Omega_h}\mbox{div}\Bu_h\cdot q_h ~\mathrm{d}\Bx}{\|\Bu_h\|_{\Omega_h}},
\end{equation*}
where $C_1$ is a positive constant independent of the mesh size $h$ and viscosity $\nu$.

Repeated application of the change of integration variable implies that
\begin{equation*}
\begin{aligned}
\|q_h\|_{L^2(\Omega_h)}
			&=\left\|\hat{q}_h J_{d}^{1/2}\right\|_{L^2(\widehat{\Omega}_h)}
			&\leq C_2 \|\hat{q}_h\|_{L^2(\widehat{\Omega}_h)}.
\end{aligned}
\end{equation*}
Then choosing $C_{\rm inf}:=\frac{\hat{\beta}}{C_1C_2}$, such that
\begin{equation*}
\begin{aligned}
C_{\rm inf} \|{q}_h\|_{L^2({\Omega}_h)}
\le \sup_{\bm0\neq\Bu_{h}\in\bm V_{h,0}^{k}}\frac{\int_{\Omega_h}\Div\Bu_h\cdot q_h ~\mathrm{d}\Bx}{\|\Bu_h\|_{\Omega_h}}.
\end{aligned}
\end{equation*}
we complete the proof.
\end{proof}
\vspace{1mm}

From  Lemma \ref{lem:coer} and Lemma \ref{lem:LBB}, we conclude the main theorem of this section.
\begin{theorem}\label{thm:exist}
The discrete saddle-point problem \eqref{scheme} has a unique solution $(\Bu_{h},p_{h})\in \bm V_{h,0}^{k}\times Q_{h}^{k-1}$.
\end{theorem}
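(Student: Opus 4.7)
The plan is to view \eqref{scheme} as a standard finite-dimensional discrete saddle-point system and invoke the abstract Brezzi framework on the pair $\bm V_{h,0}^{k}\times Q_{h}^{k-1}$, equipped with the norms $\|\cdot\|_{\Omega_h}$ and $\|\cdot\|_{L^2(\Omega_h)}$. Since the linear system is square, it suffices to verify (i) continuity of $a_{\mathrm{DG}}$ and of the divergence form $b(\Bv_h,q_h):=(\Div\Bv_h,q_h)_{\Omega_h}$, (ii) coercivity of $a_{\mathrm{DG}}$ on the discrete kernel $\Upsilon_{h,0}$, and (iii) the discrete inf-sup condition for $b$. Continuity of $b$ is an immediate application of Cauchy--Schwarz, while continuity and coercivity of $a_{\mathrm{DG}}$ follow from Lemma \ref{lem:coer} together with the identity $a_{\mathrm{DG}}=A_{\mathrm{DG}}$ on $\bm V_h^k\times\bm V_h^k$. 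Note that Lemma \ref{lem:coer} actually yields coercivity on all of $\BV(h)$, so in particular on $\Upsilon_{h,0}$. The inf-sup hypothesis is precisely Lemma \ref{lem:LBB}.

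Concretely, I would argue by uniqueness. Let $(\Bu_h,p_h)\in \bm V_{h,0}^{k}\times Q_h^{k-1}$ solve the homogeneous problem, i.e., $\Bf_h=\mathbf 0$. Testing the first equation of \eqref{scheme} with $\Bv_h=\Bu_h$ and invoking Lemma \ref{divfree} (which guarantees $\Div\Bu_h=0$ in $\Omega_h$) eliminates the pressure-divergence coupling and reduces the identity to
\[
\nu\, A_{\mathrm{DG}}(\Bu_h,\Bu_h)=0.
\]
The coercivity estimate of Lemma \ref{lem:coer} then gives $\|\Bu_h\|_{\Omega_h}=0$, and hence $\Bu_h=\mathbf 0$. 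The momentum equation thus collapses to $(p_h,\Div\Bv_h)_{\Omega_h}=0$ for every $\Bv_h\in\bm V_{h,0}^{k}$, and applying Lemma \ref{lem:LBB} forces $\|p_h\|_{L^2(\Omega_h)}=0$, so $p_h=0$. Uniqueness together with the square structure of the linear system gives existence, completing the proof.

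There is no real obstacle left at this stage: the geometric difficulty caused by the curved computational domain was absorbed into the preceding lemmas, where the Piola-based transfer from $\widehat{\mathcal T}_h$ to $\mathcal T_h$ and the careful design of the lifting operator $\mathcal L_h$ on the non-polynomial space $\bbS_h$ were carried out. Once Lemma \ref{lem:coer} and Lemma \ref{lem:LBB} are available, Theorem \ref{thm:exist} is a routine consequence of the standard mixed-method machinery, with the only mild subtlety being the need to notice that $a_{\mathrm{DG}}$ and $A_{\mathrm{DG}}$ agree on the discrete space so that coercivity for the former can be borrowed from coercivity for the latter.
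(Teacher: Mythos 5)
Your proposal is correct and follows essentially the same route as the paper, which deduces Theorem \ref{thm:exist} directly from Lemma \ref{lem:coer} and Lemma \ref{lem:LBB} via the standard Brezzi saddle-point framework. Your write-up merely fills in the routine uniqueness-implies-existence details for the finite-dimensional square system that the paper leaves implicit.
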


\section{A priori Error estimates} \label{sec:main}

The purpose of this section is to provide the error estimates of parametric mixed DG finite element approximation \eqref{scheme} of the 3D Stokes equations with curved boundary face. It is worth emphasizing priorly that our optimal error estimates are achieved through the combined application of two techniques: the  smooth divergence-free extension of the exact solution, and the transformation of the exact solution from domain $\Omega$ to the computational domain $\Omega_{h}$. Firstly we give the interpolation error estimates.

\subsection{Interpolation error estimates}
We define the parametric finite element interpolation operator $\mathcal{R}_{h}: \BH^{s}(\Omega_h)\to \BV_{h}^{k}$ for $s>1/2$. Suppose $\Bv\in \BH^{s}(\Omega_h)$ and $\hBv = J_{d}\bbJ_{\BM_h}^{-1}\Bv\circ\BM_h\in\BH(\wh{\Div},\wh\Omega_h)\cap \BH^s(\wh\Ct_h)$ is the Piola transformation. The $k^{\rm th}$-order standard BDM interpolation of $\hBv$ on straight meshes is denoted by $\wh\Cr_h\hBv$ which satisfies $\wh\Cr_h\hBv\in \BH(\wh{\Div},\wh\Omega_h)$ and $\wh\Cr_h\hBv\big|_{\wh K}\in \BP_k(\wh K)$ for any $\wh K\in\wh \Ct_h$ \cite{boffi2013mixed}. From \cite{Bertrand2016}, the parametric finite element interpolation of $\Bv$ is defined by
\begin{equation}\label{Rhv}
\mathcal{R}_{h}\bm v\overset{\rm def}{=}
\big(J_{d}^{-1}\bbJ_{\BM_h}\widehat{\mathcal{R}}_{h}\hBv\big)\circ\BM_h^{-1},
\end{equation}
It is known that $\mathcal{R}_{h}\bm v \in  \BV_{h,0}^k$ if $\Bv \in \BH_0(\Div,\Omega_h)$  \cite{Bertrand2016}.

\begin{lemma}\label{lem:int-v}
Suppose $\Bv\in \Hdiv[\Omega_h]$ and $\left.v\right|_{K} \in \BH^{k+1}(K)$, $\forall K\in\Ct_h$.  For any $K\in\Ct_h$ and a face $F\subset\partial K$, there hold
\begin{align*}
\N{\bm v-\mathcal{R}_{h}\bm v}_{\BH^m(K)}
    \lesssim\,& h^{k-m+1} \|\bm v\|_{\BH^{k+1}(K)}
    \quad\;\;  \forall\,K\in\Ct_h,\quad m =0,1,2, \\
\|\left\llbracket\bm v-\mathcal{R}_h\bm v\right\rrbracket\|_{\BL^2(F)}
&\lesssim h^{k+\frac{1}{2}} \| \bm v \|_{\BH^{k+1}(K)}.
\end{align*}
\end{lemma}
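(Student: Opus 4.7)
My plan is to pull everything back to the reference (straight) mesh $\wh\Ct_h$ via the Piola transformation, invoke the classical BDM interpolation estimates there, and then transfer back to the curved element $K$ using the uniform bounds on $\BM_h$ and its derivatives from \eqref{M-bound}--\eqref{Mh-bound}. Concretely, set $\hBv = J_d\bbJ_{\BM_h}^{-1}\Bv\circ\BM_h$ on $\wh K$, so that by \eqref{Rhv} we have $\Bv-\Cr_h\Bv = (J_d^{-1}\bbJ_{\BM_h}(\hBv-\wh\Cr_h\hBv))\circ\BM_h^{-1}$. Standard BDM theory on $\wh K$ (see e.g. \cite{boffi2013mixed}) supplies
\[
\|\hBv-\wh\Cr_h\hBv\|_{\BH^m(\wh K)} \lesssim h^{k-m+1}\|\hBv\|_{\BH^{k+1}(\wh K)},\quad m=0,1,2,
\]
together with $\|\hBv\|_{\BH^{k+1}(\wh K)}\lesssim \|\Bv\|_{\BH^{k+1}(K)}$, which itself follows from the chain rule applied to the Piola formula and the $W^{k,\infty}$ bounds on $\bbJ_{\BM_h}$ and $\bbJ_{\BM_h}^{-1}$ in \eqref{Mh-bound}.

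For the $m=0$ estimate I would then change variables: $\|\Bv-\Cr_h\Bv\|_{\BL^2(K)}^2 = \int_{\wh K}|J_d^{-1}\bbJ_{\BM_h}(\hBv-\wh\Cr_h\hBv)|^2 J_d\,d\hBx$, and use $\|J_d^{-1/2}\bbJ_{\BM_h}\|_{\BL^\infty(\wh\Omega_h)}\lesssim 1$ to reduce to the reference estimate. For $m=1,2$ the calculation is the same in spirit but one must differentiate the Piola formula; the explicit identity \eqref{Inf-mid2} (and its analogue for second derivatives) shows that each derivative of $\Bv-\Cr_h\Bv$ is a linear combination of derivatives of $\hBv-\wh\Cr_h\hBv$ of equal or lower order, multiplied by bounded (uniformly in $h$) rational functions of the entries of $\bbJ_{\BM_h}$ and its derivatives up to order $m$. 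Using \eqref{Mh-bound} to control these factors in $L^\infty$ and assembling terms yields
\[
\|\Bv-\Cr_h\Bv\|_{\BH^m(K)} \lesssim \sum_{j=0}^m\|\hBv-\wh\Cr_h\hBv\|_{\BH^j(\wh K)}\lesssim h^{k-m+1}\|\Bv\|_{\BH^{k+1}(K)}.
\]

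For the face estimate, I would combine the standard scaled trace-type inequality on $\wh K$, namely $\|\hBv-\wh\Cr_h\hBv\|_{\BL^2(\wh F)}\lesssim h^{k+1/2}\|\hBv\|_{\BH^{k+1}(\wh K)}$, with the surface-measure identity \eqref{eq:dsds} and the $L^\infty$ bound on $J_d^{-1}\bbJ_{\BM_h}$ on $F$. This gives $\|\Bv-\Cr_h\Bv\|_{\BL^2(F)}\lesssim h^{k+1/2}\|\Bv\|_{\BH^{k+1}(K)}$, and the jump estimate follows since $\jump{\Bv-\Cr_h\Bv}$ is controlled by the traces from each adjacent element.

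The main technical obstacle is the $m=2$ case: second derivatives of the Piola transform produce terms involving $\partial^2\BM_h$ and products with $\hBv-\wh\Cr_h\hBv$ itself (not only its gradient). These ``lower order'' terms are the reason we need the assumption (A2) and \eqref{Mh-bound} up to order $k$, and why the estimate must be summed over $j=0,\dots,m$ rather than kept at a single order. Once these terms are bookkept correctly using the identities \eqref{Inf-mid2}, the rest of the argument is a direct application of the reference-element theory.
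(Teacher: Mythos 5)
Your proposal follows essentially the same route as the paper: pull back to $\wh K$ via the Piola transform, invoke the classical BDM interpolation estimates on the straight element, and transfer back using the uniform $W^{k,\infty}$ bounds \eqref{Mh-bound} (the paper delegates the chain-rule bookkeeping you spell out to Appendix A of \cite{Neilan2021} and \cite[Proposition 2.5.1]{boffi2013mixed}). The only cosmetic difference is the face estimate, where the paper applies the trace inequality of \cite{arnold2002} directly on the curved element $K$ and reuses the already-proved $m=0,1$ volume bounds, whereas you work on the reference face and push forward with \eqref{eq:dsds}; both are correct and equivalent in substance.
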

\begin{proof}.\quad
Let $\hBv=J_{d} \bbJ_{\BM_h}^{-1}\Bv\circ \BM_h$ and $\widehat{\mathcal{R}}_{h}\hBv$ its BDM interpolation. From \eqref{Rhv}, it is easy to see that
\begin{align*}
(\Bv -\Cr_h\Bv)\circ\BM_h =J_{d}^{-1}\bbJ_{\BM_h}
    \big(\hBv -\widehat{\mathcal{R}}_{h}\hBv\big) .
\end{align*}
Write $\wh K =\BM_h^{-1}(K)$. Then following the lines of Appendix A in \cite{Neilan2021}, using \eqref{Mh-bound} and \cite[Proposition 2.5.1]{boffi2013mixed}, we can get
\begin{equation*}
\N{\bm v-\mathcal{R}_{h}\bm v}_{\BH^m(K)}  \lesssim
\|\hBv - \widehat{\mathcal{R}}_{h}\hBv\|_{\BH^m(\widehat{K})}
\lesssim h_{\widehat{K}}^{k+1-m} \|\hBv\|_{\BH^{k+1}(\widehat{K})}
\lesssim h^{k+1-m} \|\Bv\|_{\BH^{k+1}(K)}.
\end{equation*}
Using trace inequality (see (4.7) of \cite{arnold2002}), we have
\begin{equation*}
	\begin{aligned}
\big\|\left\llbracket\bm v-\mathcal{R}_h\bm v\right\rrbracket\big\|_{\BL^2(F)}^{2}
		& \lesssim \left(h^{-1}\|\bm v-\mathcal{R}_h\bm v\|_{\BL^2(K)}^2+h \|\nabla(\bm v-\mathcal{R}_h\bm v)\|_{\BL^2(K)}^2\right)\\
		&\lesssim \left(h^{-1}\cdot h^{2(k+1)}+h^{2k+1}\right)\| {\bm v} \|_{\BH^{k+1}(K)}^2 \lesssim h^{2k+1} \| {\bm v} \|_{\BH^{k+1}(K)}^2,
	\end{aligned}
\end{equation*}
which indicates that
\begin{equation}\label{Itp-9}	
		\|\left\llbracket\bm v-\mathcal{R}_h\bm v\right\rrbracket\|_{\BL^2(F)}
		 \lesssim h^{k+\frac{1}{2}} \| \bm v \|_{\BH^{k+1}(K)}.
\end{equation}
Then,  we complete the proof.
 \end{proof}
 \vspace{1mm}

%%%%%%%%%%%%%%%%%%%%%%%%%%%%%% pressure
%%%%%%%%%%%%%%%%%%%%%%%%%%%%%%pressure
\begin{lemma}\label{lem:int-p}
Suppose $v\in H^k(\Omega_h)$  and $\Cq_h$ is the $L^2$-projection operator onto $Q_h^{k-1}$. Then
\begin{equation}
\|v-\Cq_h (v)\|_{L^2(\Omega_h)} \lesssim h^k \|v\|_{H^k(\Omega_h)}
\quad \forall\, v \in H^k(\Omega_h).
\end{equation}
\end{lemma}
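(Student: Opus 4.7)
The plan is to reduce the problem to the standard $L^2$-projection error estimate on the straight reference mesh $\widehat{\mathcal{T}}_h$ by using the mapping $\BM_h$ to transport everything back and forth, and then exploit the fact that $\Cq_h$ is the best $L^2$-approximation in $Q_h^{k-1}$ to avoid having to analyze $\Cq_h$ directly. This matches the template used for the velocity interpolant in Lemma~\ref{lem:int-v}.

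First I would set $\hat v = v \circ \BM_h \in H^k(\widehat\Omega_h)$. By the chain rule together with the uniform bound \eqref{Mh-bound} on $\bbJ_{\BM_h}$ and its derivatives up to order $k$, one obtains $\|\hat v\|_{H^k(\widehat\Omega_h)} \lesssim \|v\|_{H^k(\Omega_h)}$. Next I would introduce the standard $L^2$-projection $\widehat\Pi_h$ onto the straight-mesh space $\widehat Q_h^{k-1}$ and invoke the classical Bramble--Hilbert estimate on each $\widehat K$ to get
\[
\|\hat v - \widehat \Pi_h \hat v\|_{L^2(\widehat\Omega_h)} \lesssim h^{k}\,\|\hat v\|_{H^k(\widehat\Omega_h)} \lesssim h^{k}\,\|v\|_{H^k(\Omega_h)}.
\]
I would then define the candidate $\tilde q_h := (\widehat\Pi_h \hat v) \circ \BM_h^{-1} \in Q_h^{k-1}$. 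A change of variables combined with the uniform bound on $J_d = \det(\bbJ_{\BM_h})$ (again from \eqref{Mh-bound}) yields
\[
\|v - \tilde q_h\|_{L^2(\Omega_h)}^2 \;=\; \int_{\widehat\Omega_h} |\hat v - \widehat\Pi_h \hat v|^2\,J_d\,\D\hat x \;\lesssim\; \|\hat v - \widehat\Pi_h \hat v\|_{L^2(\widehat\Omega_h)}^2 .
\]
Finally, since $\Cq_h$ is the $L^2$-orthogonal projection onto $Q_h^{k-1}$, it is the best approximation in that norm, so $\|v - \Cq_h v\|_{L^2(\Omega_h)} \leq \|v - \tilde q_h\|_{L^2(\Omega_h)}$, and combining the three displayed inequalities gives the claim.

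The one mildly delicate point is the mean-zero constraint baked into $\widehat Q_h^{k-1}$ (and hence $Q_h^{k-1}$): the pulled-back function $\hat v$ is not in general mean-zero, so $\widehat \Pi_h \hat v$ is the best \emph{mean-zero} piecewise-polynomial approximation of $\hat v$ rather than the unconstrained one. The standard workaround is to first subtract the average $\bar v_h := |\widehat\Omega_h|^{-1}\int_{\widehat\Omega_h}\hat v$ so that $\hat v - \bar v_h$ has mean zero, apply the unconstrained BH-type bound to this shifted function (the constant subtraction does not affect the $H^k$ semi-norm), and observe that adding $\bar v_h$ back only introduces a lower-order effect because the full-norm bound on $\hat v$ absorbs it. I expect this bookkeeping of constants — together with carefully tracking that $\tilde q_h$ really lands in $Q_h^{k-1}$ after the pushforward — to be the only nontrivial portion of the argument; the approximation rate itself falls out immediately from standard polynomial approximation on the straight reference mesh.
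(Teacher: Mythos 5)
Your main line of argument is essentially the paper's: pull $v$ back to the straight reference mesh, invoke the standard elementwise $L^2$-projection estimate there, and transfer the bound back using the uniform control \eqref{Mh-bound} on $\bbJ_{\BM_h}$ and $J_d$, with a best-approximation argument relating the two projections. The only structural difference is where optimality is invoked: the paper identifies $\Cq_h(v)\circ\BM_h$ with the $J_d$-weighted $L^2$-projection $\wh\Cq_J(\hat v)$ onto $\wh Q_h^{k-1}$ and compares it, in the weighted norm, against the standard reference projection, whereas you push the standard reference projection forward to a competitor $\tilde q_h\in Q_h^{k-1}$ and use optimality of $\Cq_h$ on $\Omega_h$. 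These two devices are interchangeable and cost the same.

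The step that fails is your proposed handling of the mean-zero constraint. If $\hat v=v\circ\BM_h$ has mean $\bar v_h\neq 0$ over $\widehat\Omega_h$, then every $\hat q_h\in\wh Q_h^{k-1}\subset L_0^2(\widehat\Omega_h)$ satisfies $\int_{\widehat\Omega_h}(\hat v-\hat q_h)\,\D\hat{x}=\bar v_h\,|\widehat\Omega_h|$, hence by Cauchy--Schwarz $\|\hat v-\hat q_h\|_{L^2(\widehat\Omega_h)}\ge |\bar v_h|\,|\widehat\Omega_h|^{1/2}$ for \emph{every} competitor. This lower bound is of size $O(\|v\|_{L^2})$ for a generic non-mean-zero $v$ (e.g.\ $v\equiv 1$); it is not a ``lower-order effect absorbed by the full norm,'' because the target bound carries the factor $h^k$, and since $J_d$ is bounded above and below the obstruction survives the change of variables to $\Omega_h$. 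No bookkeeping of constants repairs this. To be fair, the paper's own proof has the identical unstated hypothesis --- it applies $\wh\Cq_h:L_0^2(\widehat\Omega_h)\to\wh Q_h^{k-1}$ to $\hat v$, which implicitly assumes $\hat v$ has zero mean --- so the lemma as literally stated is false for $v\equiv 1$ and should be read as holding for $v$ with the appropriate ($J_d$-weighted) zero mean, or modulo constants, which is all that is needed for the pressure. You were right to flag the issue; the claimed resolution is the part of your argument that does not go through.
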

\begin{proof}.\quad
Let $\wh\Cq_J$ be the weighted $L^2$-projection operator onto $\wh Q_h^{k-1}$, namely,
\begin{equation*}
\int_{\wh\Omega_h}J_{d}\wh\Cq_J(\hat{v})\,\hat{q}_h =
\int_{\wh\Omega_h}J_{d} \hat{v}\,\hat{q}_h \quad
\forall\,\hat q_h\in \wh Q_h^{k-1}.
\end{equation*}
Note that $\hat{w} = w\circ\BM_h$ for any $w\in \Ltwo[\Omega_h]$. This implies that, for $q_h\in Q_h^{k-1}$,
\begin{equation*}
\int_{\Omega_h}\Cq_h (v)\, q_h =
\int_{\Omega_h} v\, q_h =
\int_{\widehat{\Omega}_h} J_{d}\hat{v}\, \hat{q}_h =
\int_{\widehat{\Omega}_h} J_{d} \wh\Cq_J(\hat{v})\, \hat{q}_h .
\end{equation*}
This indicates $\Cq_h (v)\circ\BM_h = \wh\Cq_J(\hat{v})$.

Let $\wh\Cq_h:~L_0^2(\wh\Omega_h)\rightarrow \wh Q_h^{k-1}$ be the standard $L^2$-projection operator. Then by property of the weighted $L^2$-projection $\wh\Cq_J$,
\begin{align*}
\|v-\Cq_h (v)\|_{L^2(\Omega_h)}
= \big\|J_{d}^{1/2}\big(\hat{v} -\wh\Cq_J(\hat{v})\big)\big\|_{L^2(\wh\Omega_h)}
\le \big\|J_{d}^{1/2}\big(\hat{v} -\wh\Cq_h(\hat{v})\big)\big\|_{L^2(\wh\Omega_h)} .
\end{align*}
Since $\wh Q_h^{k-1}$ is a DG finite element space, $\wh\Cq_h(\hat v)$ is actually the optimal approximation
of $\hat{v}$.
From above inequalities, by \eqref{Mh-bound},  we deduce that
\begin{align*}
\|v-\Cq_h (v)\|_{L^2(\Omega_h)}
\lesssim \big\|\hat{v} -\wh\Cq_h(\hat{v})\big\|_{L^2(\wh\Omega_h)}
\lesssim h^k \|\hat{v}\|_{H^k(\widehat{\Omega}_h)}
\lesssim h^k \|v\|_{H^k(\Omega_h)}.
\end{align*}
The proof is finished.
\end{proof}

\subsection{Error estimate for the transformed solutions}
For the finite element approximation error estimate, similar to \cite{Lenoir1986,Bertrand2016}, we introduce the compound mapping $\Phi_h\overset{\rm def}{=} \BM_{h}\circ \BM^{-1}$ which maps $\Omega$ to $\Omega_{h}$.
We also define a bounded Lipschitz  domain $D_H = \mathrm{conv}(\Omega_h\cup \Omega)$ (which is called \textit{hold-all domain} in \cite{Aylwin2023}) .

Since the computational domain $\Omega_{h}$ may not fully encompass the curved domain $\Omega$, resulting in the potential absence of the exact solutions $\Bu$ and $p$ over the computational domain $\Omega_{h}$, we adopt the regularly extension from \cite{Durst2024} to estimate the finite element approximation error.

We assume that the true solutions $\Bu$ and $p$ of the Stokes equations \eqref{S1}-\eqref{Sb}
can be smoothly divergence-free extended to $D_H$ (see the discussions in Section 5 of reference \cite{Durst2024}), such that the extensions
$\tilde\Bu \in \BH^{k+1}(D_H), \tilde p \in H^{k}(D_H), k \geq1$ and  $\Div\tilde\Bu = 0$  holds on $D_H$.
The source term $\Bf$ is extended to $D_H$ by the definition $\tilde\Bf= -\nu\Delta \tilde\Bu + \nabla  \tilde p$.
Consequently, $\tilde\Bu$ and $\tilde p$ have definition on the domain $\Omega_h$. We will derive the error bounds for the  terms $\|\tilde\Bu-\Bu_h\|_{\Omega_h}$ and $\|\tilde p-p_h\|_{L^2(\Omega_h)}$.
We point out that, in \cite{Aylwin2023}, some similar extension assumptions for the analysis of Maxwell variational problems on curved domains are also used.

We will use a transformed solution $\Bu^{\sharp} = \Bu\circ \Phi_{h}^{-1}$ defined on $\Omega_{h}$.
For any $K\in\Ct_h$ and $\wt K=\Phi_h^{-1}(K)$, we know that (Proposition 4 of \cite{Lenoir1986})
\begin{equation}\label{u-usharp}
\|\Bu^\sharp\|_{\BH^{m}(K)} \lesssim \|\Bu\|_{\BH^{m}(\wt K)},~~\|\Bu\|_{\BH^{m}(\wt K)}\lesssim \|\Bu^\sharp\|_{\BH^{m}(K)} ,
\quad m\ge 0.
\end{equation}
Firstly we have the following lemma.
\begin{lemma}\label{lem:u-usharp}
Assume the extension of $\Bu$ satisfies $\tilde\Bu \in \BH^2(D_H)$. Then we have
\begin{equation}
\|\tilde\Bu -\Bu^\sharp\|_{\Omega_h} \lesssim h^{k}\|\tilde{\Bu}\|_{\BH^{2}(D_H)}.
\end{equation}
\end{lemma}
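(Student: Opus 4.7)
The plan is to split $\|\tilde\Bu-\Bu^\sharp\|_{\Omega_h}^2$ into the broken-gradient contribution $\|\nabla_h(\tilde\Bu-\Bu^\sharp)\|_{\BL^2(\Omega_h)}^2$ and the face-jump sum $\sum_{F\in\Cf_h}h_F^{-1}\|\jump{\tilde\Bu-\Bu^\sharp}\|_{\BL^2(F)}^2$, and treat each separately. Before doing so I would record the geometric estimates for the composite map $\Phi_h=\BM_h\circ\BM^{-1}$. Using $\BM\circ\BM^{-1}=\Id$ and the chain rule, combined with \eqref{M-bound} and \eqref{domain:app}, one gets
\[
\|\Phi_h-\Id\|_{\BL^\infty(\Omega)}\lesssim h^{k+1},\qquad \|\bbI-\nabla\Phi_h\|_{\BL^\infty(\Omega)}\lesssim h^{k},
\]
and the inverse function theorem produces the same bounds for $\Phi_h^{-1}$ on $\Omega_h$ (valid for $h$ small). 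These two scales drive the entire argument.

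For the broken-gradient contribution, since $\Bu=\tilde\Bu$ on $\Omega$ the chain rule gives $\nabla\Bu^\sharp=(\nabla\tilde\Bu)\circ\Phi_h^{-1}\cdot\nabla\Phi_h^{-1}$, and I would decompose
\[
\nabla\tilde\Bu-\nabla\Bu^\sharp=\big[\nabla\tilde\Bu-(\nabla\tilde\Bu)\circ\Phi_h^{-1}\big]+(\nabla\tilde\Bu)\circ\Phi_h^{-1}\big(\bbI-\nabla\Phi_h^{-1}\big).
\]
The first bracket is controlled by the fundamental theorem of calculus along the segment from $\Bx$ to $\Phi_h^{-1}(\Bx)$; taking the $\BL^2(\Omega_h)$-norm and applying the change of variables $\By=\Phi_h^{-1}(\Bx)$ (with unit-order Jacobian) transfers the integral onto $\Omega\subset D_H$, producing an $h^{k+1}\|\tilde\Bu\|_{\BH^2(D_H)}$ bound. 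The second bracket is estimated by pulling out the factor $\|\bbI-\nabla\Phi_h^{-1}\|_{\BL^\infty}\lesssim h^k$ and applying the same change of variables, giving an $h^k\|\tilde\Bu\|_{\BH^1(D_H)}$ bound. The dominant term is therefore $h^k\|\tilde\Bu\|_{\BH^2(D_H)}$, matching the claimed rate.

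For the face-jump sum I would first observe that $\Phi_h^{-1}$ is globally continuous on $\Omega_h$ and $\tilde\Bu\in\BH^2(D_H)$ embeds into continuous functions on $D_H$ in three dimensions, so both $\tilde\Bu$ and $\Bu^\sharp=\Bu\circ\Phi_h^{-1}$ are continuous across interior faces and the interior-face contributions vanish. For $F\in\Cf_h^\partial$, the identities $\Phi_h(\Gamma)=\Gamma_h$ and $\Bu|_\Gamma=\bm{0}$ give $\Bu^\sharp|_{\Gamma_h}=\bm{0}$, hence $\jump{\tilde\Bu-\Bu^\sharp}|_F=\tilde\Bu\otimes\Bn|_F$. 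Writing $\tilde\Bu(\Bx)=\tilde\Bu(\Bx)-\tilde\Bu(\Phi_h^{-1}(\Bx))$ on $\Gamma_h$ and applying Cauchy--Schwarz along the segment of length $\sim h^{k+1}$ yields $|\tilde\Bu(\Bx)|^2\lesssim h^{k+1}\int_0^{|\Bx-\Phi_h^{-1}(\Bx)|}|\partial_n\tilde\Bu|^2\,ds$. Summing over $F$ and combining the strip inequality $\|\nabla\tilde\Bu\|_{\BL^2(T)}^2\lesssim\delta\|\nabla\tilde\Bu\|_{\BL^2(\Gamma)}^2+\delta^2\|\tilde\Bu\|_{\BH^2(D_H)}^2$ (with $\delta\sim h^{k+1}$) and the trace theorem $\|\nabla\tilde\Bu\|_{\BL^2(\Gamma)}\lesssim\|\tilde\Bu\|_{\BH^2(D_H)}$ gives $\sum_{F\in\Cf_h^\partial}h_F^{-1}\|\jump{\tilde\Bu-\Bu^\sharp}\|_{\BL^2(F)}^2\lesssim h^{2k+1}\|\tilde\Bu\|_{\BH^2(D_H)}^2$, which is subdominant.

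The main obstacle is this boundary-face estimate: one must carefully track the thin tube of width $O(h^{k+1})$ between $\Gamma$ and $\Gamma_h$, where the volume-to-surface Jacobian scales with $h^{k+1}$, and combine it with the trace bound for $\nabla\tilde\Bu$ on $\Gamma$ together with normal-direction integration in a strip. The broken-gradient estimate and the cancellation on interior faces, in contrast, follow mechanically from the two $\BL^\infty$ perturbation bounds for $\Phi_h$ once the change of variables is set up.
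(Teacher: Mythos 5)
Your proposal is correct, and for the dominant term it coincides with the paper's argument: the paper uses the identity $\nabla\tilde\Bu-\nabla\Bu^\sharp=\nabla\tilde\Bu(\bbI-\bbJ_{\Phi_h}^{-1})+(\nabla\tilde\Bu-\nabla\tilde\Bu\circ\Phi_h^{-1})\bbJ_{\Phi_h}^{-1}$, which is your decomposition up to regrouping, together with the same two geometric bounds $\|\mathrm{Id}-\Phi_h^{-1}\|_{\BL^\infty(\Omega_h)}\lesssim h^{k+1}$ and $\|\bbI-\bbJ_{\Phi_h}^{-1}\|_{\BL^\infty(\Omega_h)}\lesssim h^k$ (cited from Lenoir) and the same shift estimate, yielding $h^{2k}\|\tilde\Bu\|_{\BH^2(D_H)}^2$. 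Where you genuinely diverge is the boundary-face sum. The paper does not exploit $\tilde\Bu|_\Gamma=\bm 0$ at all: it simply applies the elementwise trace inequality $h_F^{-1}\|v\|_{\BL^2(F)}^2\lesssim h^{-2}\|v\|_{\BL^2(K)}^2+\|\nabla v\|_{\BL^2(K)}^2$ to $v=\tilde\Bu-\Bu^\sharp$, then reuses the $\BL^2$ shift bound $\|\tilde\Bu-\Bu^\sharp\|_{\BL^2(\Omega_h)}\lesssim h^{k+1}\|\tilde\Bu\|_{\BH^1}$ and the already-established gradient bound, landing again at $h^{2k}$. Your route instead writes $\tilde\Bu(\Bx)=\tilde\Bu(\Bx)-\tilde\Bu(\Phi_h^{-1}(\Bx))$ on $\Gamma_h$ and integrates the normal derivative across the $O(h^{k+1})$ gap between $\Gamma$ and $\Gamma_h$, combined with a thin-strip estimate and the trace theorem. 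This is more delicate (you must justify that the segments foliate the gap with a nondegenerate surface-to-volume Jacobian, and invoke a strip inequality the paper never needs), and it buys a sharper $h^{2k+1}$ rate for the boundary term — which, as you note, is subdominant and does not improve the final $O(h^k)$ result. The paper's treatment is the more economical one here, since it recycles bounds already in hand; your version would be the right tool if one wanted to isolate the boundary contribution at its true order.
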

\begin{proof}.\quad
Because $\tilde{\Bu} \in \BH^2(D_H)$ and $\Bu^\sharp \in \BH_0^1(\Omega_h)$, we have
\begin{equation}\label{u-udag-1}
\|\tilde{\Bu}-\Bu^\sharp\|_{\Omega_h}^2 = \int_{\Omega_h}  |\nabla(\tilde{\Bu}-\Bu^\sharp)|^2~\mathrm{d}\Bx +
\sum\limits_{F\in\Cf_h^\partial} h_{\widehat{F}}^{-1} \int_{F} |\tilde{\Bu}-\Bu^\sharp|^2~\mathrm{ds},
\end{equation}
For convenience, we denote the Jacobian matrix of $\Phi_h$ by $\bbJ_{\Phi_h}$.
Note that due to the definition of extension, we have $\nabla\Bu \circ \Phi_h^{-1}(\Bx)= \nabla\tilde{\Bu} \circ \Phi_h^{-1}(\Bx)$ for $\Bx \in \Omega_h$.
With the chain rule,  we have
\begin{equation}\label{transform error}
\begin{aligned}
\int_{\Omega_h} & |\nabla(\tilde{\Bu}-\Bu^\sharp)|^2~\mathrm{d}\Bx = \int_{\Omega_h} \left|\nabla\tilde{\Bu} - (\nabla\Bu\circ \Phi_h^{-1})\bbJ_{\Phi_h}^{-1}\right|^2 ~\mathrm{d}\Bx\\
&= \int_{\Omega_h} {\left|\nabla\tilde{\Bu} (\bbI - \bbJ_{\Phi_h}^{-1}) + \left(\nabla\tilde{\Bu}\circ\mathrm{Id} - \nabla\tilde{\Bu}\circ\Phi_h^{-1}\right)\bbJ_{\Phi_h}^{-1}\right|^2}  ~\mathrm{d}\Bx\\
& \lesssim \|\nabla\tilde{\Bu} (\bbI - \bbJ_{\Phi_h}^{-1})\|_{\BL^2(\Omega_h)}^2 + \|\nabla\tilde{\Bu}\circ\mathrm{Id} - \nabla\tilde{\Bu}\circ\Phi_h^{-1}\|_{\BL^2(\Omega_h)}^2 \\
& \lesssim \|\bbI - \bbJ_{\Phi_h}^{-1}\|_{\BL^{\infty}(\Omega_h)}^2  \|\nabla\tilde{\Bu}\|_{\BL^2(\Omega_h)}^2 +
\|\mathrm{Id} - \Phi_h^{-1}\|_{\BL^{\infty}(\Omega_h)}^2
\|\nabla\tilde{\Bu}\|_{\BH^1(D_H)}^2,
\end{aligned}
\end{equation}
where $\bbI$ is the $3\times 3$ identity matrix.
From the Proposition 2 of \cite{Lenoir1986}, we have
\begin{equation}
\|\mathrm{Id} - \Phi_h^{-1}\|_{\BL^{\infty}(\Omega_h)} \lesssim h^{k+1}, \quad \|\bbI - \bbJ_{\Phi_h}^{-1}\|_{\BL^{\infty}(\Omega_h)} \lesssim h^k.
\end{equation}
Using the definition $D_H = \mathrm{conv}(\Omega_h\cup \Omega)$, we obtain
\begin{equation}
\int_{\Omega_h}  |\nabla(\tilde{\Bu}-\Bu^\sharp)|^2~\mathrm{d}\Bx \lesssim h^{2k} \|\tilde{\Bu}\|_{\BH^2(D_H)}^2.
\end{equation}
For the boundary integral terms, using trace inequality (see (4.7) of \cite{arnold2002}), and through similar processes like \eqref{transform error}, we have
\begin{equation}
\begin{aligned}
\sum\limits_{F\in\Cf_h^\partial} h_{\widehat{F}}^{-1} \int_{F} |\tilde{\Bu}-\Bu^\sharp|^2~\mathrm{ds}& \lesssim  h^{-2}\|\tilde{\Bu}-\Bu^\sharp\|_{\BL^2(\Omega_{h})}^2 + \|\nabla(\tilde{\Bu}-\Bu^\sharp)\|_{\BL^2(\Omega_{h})}^2\\
& \lesssim h^{-2}\|\mathrm{Id} - \Phi_h^{-1}\|_{\BL^{\infty}(\Omega_h)}^2\|\tilde{\Bu}\|_{\BH^1(\Omega_{h})}^2 + h^{2k}\|\tilde{\Bu}\|_{\BH^2(D_H)}^2 \\
& \lesssim h^{2k}\|\tilde{\Bu}\|_{\BH^{2}(D_H)}^2.
\end{aligned}
\end{equation}
The proof is completed.
\end{proof}

%\begin{remark}
%When the meshes are linear (straight meshes $k=1$), from the above Lemma, we only have 1st-order convergence rate.
%However in the numerical results of  Example~\ref{ex1}, where the convergence order of $1.5$ is attained when second-order  BDM element is employed on straight meshes.
%To remedy this issue, if we assume $\tilde{\Bu} \in \BW^{1,\infty}(D_H)$ additionally, we can prove
%\be
%\|\tilde\Bu -\Bu^\sharp\|_{\Omega_h} \lesssim h^{3/2} \|\tilde{\Bu}\|_{\BW^{1,\infty}(D_H)} + h^2 \|\tilde{\Bu}\|_{\BH^{2}(D_H)}.
%\ee
%The proof is given in Appendix A.
%\end{remark}

In the following, we will frequently use the notations $\Bu,p$ and $\Bf$ to refer to their extensions $\tilde{\Bu},\tilde{p}$ and $\tilde{\Bf}$, if there is no confusion from the context.
%%%%%%%%%%%%%%%%%%%%%%%%%%%%%%%%%%%%%
%%%%%%%%%%%%%%%%%%%%%%%%%%%%%%%%%%%%%
Then we character an approximation property of the auxiliary space $\bbS_h$ in \eqref{space-auxi} to $\nabla\Bu$ of the true solution by the $\BL^2$-projection for later use.
\begin{lemma}\label{lerror}
Assume $\Bu \in \BH^{k+1}(\Omega_h)$. Let $\mathcal{K}_h: \BH^k(\Omega_{h}) \rightarrow \bbS_h $ be the $\BL^2$-projection  onto $\bbS_h$, i.e., $ (\mathcal{K}_h(\nabla \Bu), \taubf_h)_{\Omega_h} = (\nabla \Bu, \taubf_h)_{\Omega_h} , \forall\, \taubf_h \in \bbS_h$ . Consequently, for any $K \in \Ct_h$,  $\mathcal{K}_h$ projects $\nabla\Bu$ onto $\BS_h(K)$.
Then we can get the error estimate
\begin{equation}
 \|\nabla\Bu-\Ck_h(\nabla\Bu)\|_{\BL^2(\partial K)}^2 \lesssim  h_{\widehat{K}}^{2k-1}  \|\Bu\|_{\BH^{k+1}(K)}^2.
\end{equation}
\end{lemma}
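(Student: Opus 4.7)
My plan is to decompose the error by inserting the parametric BDM interpolant $\Cr_h \Bu$ from Lemma~\ref{lem:int-v} as an intermediary, which exploits the fact that $\nabla\Cr_h\Bu|_K$ already lies in the auxiliary space $\BS_h(K)$. Indeed, $\Cr_h\Bu|_K$ belongs to the local space $\BV_h^k(K) = \mathrm{span}\{\varphi_1,\ldots,\varphi_{n_u}\}$, so its gradient is a linear combination of $\nabla\varphi_1,\ldots,\nabla\varphi_{n_u}$ and hence belongs to $\BS_h(K)$. Moreover, since $\bbS_h$ is the direct sum of the local pieces $\BS_h(K)$ with no inter-element continuity constraints, the $\BL^2$-projection $\Ck_h$ acts element-by-element. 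A triangle inequality then reduces the task on each curved $K\in\Ct_h$ to estimating
\begin{equation*}
\|\nabla\Bu - \nabla\Cr_h\Bu\|_{\BL^2(\partial K)} \;\; \text{and} \;\; \|\nabla\Cr_h\Bu - \Ck_h(\nabla\Bu)\|_{\BL^2(\partial K)}
\end{equation*}
separately.

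For the first term I would apply the Sobolev trace inequality on $K$ (the same one used in \eqref{Itp-9}, i.e.\ inequality (4.7) of \cite{arnold2002}) to $w = \nabla(\Bu - \Cr_h\Bu) \in \BH^1(K)$, producing a bound of the form $h^{-1}\|w\|_{\BL^2(K)}^2 + h\|\nabla w\|_{\BL^2(K)}^2$. Feeding in the $m=1$ and $m=2$ cases of Lemma~\ref{lem:int-v} gives $h^{-1}\cdot h^{2k} + h\cdot h^{2(k-1)} = h^{2k-1}$ times $\|\Bu\|_{\BH^{k+1}(K)}^2$, exactly the rate claimed.

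For the second term the crucial point is that $\nabla\Cr_h\Bu - \Ck_h(\nabla\Bu) \in \BS_h(K)$, so the polynomial-style trace inequality of Lemma~\ref{lemma-trace} applies and costs only an $h_{\widehat K}^{-1}$ factor in passing from the boundary norm to the volume norm. The remaining volume norm is controlled by the local best-approximation property of $\Ck_h$ together with a further triangle inequality,
\begin{equation*}
\|\nabla\Cr_h\Bu - \Ck_h(\nabla\Bu)\|_{\BL^2(K)} \le 2\|\nabla\Bu - \nabla\Cr_h\Bu\|_{\BL^2(K)} \lesssim h^k \|\Bu\|_{\BH^{k+1}(K)},
\end{equation*}
which once more yields the $h^{2k-1}$ order after dividing by $h_{\widehat K}$.

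The step I expect to require the most care is the structural verification that $\nabla\Cr_h\Bu|_K \in \BS_h(K)$ and that $\Ck_h$ is genuinely local on $\Ct_h$; these are what license using Lemma~\ref{lemma-trace} on the second term instead of the coarser generic trace inequality, and without them the strategy collapses to at best an $h^{k-1/2}$-type bound on the boundary. Apart from this, the only mild subtlety is keeping $h$ and $h_{\widehat K}$ comparable on curved elements, which is guaranteed by shape regularity of $\widehat{\Ct}_h$ together with the uniform bounds \eqref{Mh-bound} on $\bbJ_{\BM_h}$.
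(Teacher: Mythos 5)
Your proposal is correct and follows essentially the same route as the paper: both insert the interpolant $\Cr_h\Bu$, exploit that $\nabla\Cr_h\Bu|_K\in\BS_h(K)$ together with the locality and best-approximation property of $\Ck_h$, and invoke Lemma~\ref{lem:int-v} with $m=1,2$ to reach the rate $h_{\widehat K}^{2k-1}$. The only cosmetic difference is that you split the error at the boundary and treat the discrete piece with Lemma~\ref{lemma-trace}, whereas the paper applies the Sobolev trace inequality to the full error and then controls the $\BH^1$-seminorm of the discrete piece by an inverse estimate --- the two are interchangeable here.
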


\begin{proof}.\quad
With the trace inequality (see (4.7) of \cite{arnold2002}),
one has
\[ \|\nabla\Bu-\Ck_h(\nabla\Bu)\|_{\BL^2(\partial K)}^2  \leq   h_{\widehat{K}}^{-1}\|\nabla\Bu-\Ck_h(\nabla\Bu)\|_{\BL^2(K)}^2
+  h_{\widehat{K}}|\nabla\Bu-\Ck_h(\nabla\Bu)|_{\BH^1(K)}^2,\]
Using the triangle inequality, we have
\[\|\nabla\Bu-\Ck_h(\nabla\Bu)\|_{\BL^2(K)} \leq \|\nabla(\Bu-\Cr_h\Bu)\|_{\BL^2(K)} +\|\nabla(\Cr_h\Bu)-\Ck_h(\nabla\Bu)\|_{\BL^2(K)}, \]
Because
\begin{equation}\label{Kheq0}
\|\nabla(\Cr_h\Bu)-\Ck_h(\nabla\Bu)\|_{\BL^2(K)}^2  = \int_K (\nabla(\Cr_h\Bu)-\nabla\Bu) : (\nabla(\Cr_h\Bu)-\Ck_h(\nabla\Bu)) \mathrm{d}\Bx,
\end{equation}
We will obtain (using Lemma \ref{lem:int-v})
\begin{equation}\label{Kheq1}
\|\nabla\Bu-\Ck_h(\nabla\Bu)\|_{\BL^2(K)} \leq 2\|\nabla(\Bu-\Cr_h\Bu)\|_{\BL^2(K)} \lesssim  h_{\widehat{K}}^k \|\Bu\|_{\BH^{k+1}(K)},
\end{equation}
Furthermore, similarly we have
\[|\nabla\Bu-\Ck_h(\nabla\Bu)|_{\BH^1(K)} \leq |\nabla(\Bu-\Cr_h\Bu)|_{\BH^1(K)} + |\nabla(\Cr_h\Bu)-\Ck_h(\nabla\Bu)|_{\BH^1(K)},\]
Utilizing the inverse estimate (Lemma 4.5.3 of \cite{bre2008}), \eqref{Kheq0} and Lemma \ref{lem:int-v} ($m=1$)
\[ |\nabla(\Cr_h\Bu)-\Ck_h(\nabla\Bu)|_{\BH^1(K)} \lesssim  h_{\widehat{K}}^{-1} \|\nabla(\Bu-\Cr_h\Bu)\|_{\BL^2(K)} \lesssim  h_{\widehat{K}}^{k-1} \|\Bu\|_{\BH^{k+1}(K)},\]
Lemma \ref{lem:int-v} ($m=2$) tells us that
\[|\nabla(\Bu-\Cr_h\Bu)|_{\BH^1(K)} \lesssim  h_{\widehat{K}}^{k-1} \|\Bu\|_{\BH^{k+1}(K)},\]
Finally we have
\[ \|\nabla\Bu-\Ck_h(\nabla\Bu)\|_{\BL^2(\partial K)}^2 \lesssim h_{\widehat{K}}^{2k-1} \|\Bu\|_{\BH^{k+1}(K)}^2.\]
The proof is completed.
\end{proof}

\subsection{Approximation error estimates}

Now we are in the position to study the approximation errors $\Bu-\Bu_h$ and $p-p_h$ in $\Omega_h$,
where $\Bu_h$ and $p_h$ are the solutions of the parametric mixed DG finite element scheme \eqref{scheme}.

\begin{lemma}\label{lem:cea-vh}
For any $\Bv\in\Honev[\Ct_h]\cap\Hdiv[\Omega_h]$, there holds
\ben
\inf_{\Bw_h\in \Upsilon_{h,0}}\N{\Bv-\Bw_h}_{\Omega_h}
\lesssim \inf_{\Bv_h\in \BV^k_{h,0}}\N{\Bv-\Bv_h}_{\Omega_h}
+\NLtwo[\Omega_h]{\Div\Bv}.
\een
\end{lemma}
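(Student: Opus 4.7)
My strategy is a Fortin-type correction driven by the discrete inf-sup inequality of Lemma~\ref{lem:LBB}. Given any $\Bv_h\in\BV_{h,0}^k$, which in general does not lie in the discrete kernel $\Upsilon_{h,0}$, I would construct a correction $\Br_h\in\BV_{h,0}^k$ that absorbs the discrete divergence of $\Bv_h$ in the sense that $(\Div \Br_h, q_h)_{\Omega_h}=(\Div \Bv_h, q_h)_{\Omega_h}$ for every $q_h\in Q_h^{k-1}$, and then set $\Bw_h := \Bv_h - \Br_h$. Membership $\Bw_h\in \Upsilon_{h,0}$ is then immediate from the defining identity (and by Lemma~\ref{divfree} one even has $\Div \Bw_h = 0$ pointwise in $\Omega_h$), so the whole lemma reduces to controlling $\|\Br_h\|_{\Omega_h}$ by the right-hand side of the stated inequality.

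To construct $\Br_h$, I would exploit Lemma~\ref{lem:LBB}. In finite dimensions, the one-sided inf-sup inequality is equivalent to the existence of a bounded right inverse of the map $\BV_{h,0}^k\ni \Bu_h\mapsto (\Div \Bu_h,\cdot)_{\Omega_h}\in (Q_h^{k-1})'$ with operator norm at most $1/C_{\rm inf}$. Applying this right inverse to the functional $q_h\mapsto (\Div \Bv_h, q_h)_{\Omega_h}$ yields $\Br_h\in \BV_{h,0}^k$ satisfying the desired orthogonality together with the estimate
\[
\|\Br_h\|_{\Omega_h}\;\le\;\frac{1}{C_{\rm inf}}\sup_{0\neq q_h\in Q_h^{k-1}}\frac{|(\Div \Bv_h, q_h)_{\Omega_h}|}{\|q_h\|_{L^2(\Omega_h)}}.
\]

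I would then bound the supremum by splitting $(\Div \Bv_h, q_h)_{\Omega_h} = (\Div(\Bv_h-\Bv), q_h)_{\Omega_h} + (\Div \Bv, q_h)_{\Omega_h}$, applying Cauchy-Schwarz to each piece, and using the pointwise elementwise inequality $|\Div \Bz|\le\sqrt{3}\,|\nabla \Bz|$ to deduce $\|\Div(\Bv-\Bv_h)\|_{L^2(\Omega_h)}\le\sqrt{3}\,\|\nabla_h(\Bv-\Bv_h)\|_{\Lpv[\Omega_h]{2}}\le\sqrt{3}\,\|\Bv-\Bv_h\|_{\Omega_h}$. This yields $\|\Br_h\|_{\Omega_h}\lesssim \|\Bv-\Bv_h\|_{\Omega_h}+\|\Div \Bv\|_{L^2(\Omega_h)}$; the triangle inequality $\|\Bv-\Bw_h\|_{\Omega_h}\le\|\Bv-\Bv_h\|_{\Omega_h}+\|\Br_h\|_{\Omega_h}$ followed by taking the infimum over $\Bv_h\in\BV_{h,0}^k$ then closes the argument. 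The only step that demands any real care is extracting the bounded right inverse of the divergence from the one-sided inf-sup bound of Lemma~\ref{lem:LBB}, which is a standard finite-dimensional duality argument but is worth stating explicitly; the remaining estimates are pure bookkeeping.
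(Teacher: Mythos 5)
Your proposal is correct and follows essentially the same route as the paper: the paper also invokes the inf-sup condition of Lemma~\ref{lem:LBB} (via Proposition~1.2 of Brezzi--Fortin) to produce a unique corrector $\Bz_h$ in a complement of $\Upsilon_{h,0}$ satisfying $(\Div\Bz_h,q_h)_{\Omega_h}=(\Div\Bv_h,q_h)_{\Omega_h}$ with $\|\Bz_h\|_{\Omega_h}$ controlled by the dual norm of $q_h\mapsto(\Div\Bv_h,q_h)_{\Omega_h}$, then splits this as $(\Div(\Bv_h-\Bv),q_h)_{\Omega_h}+(\Div\Bv,q_h)_{\Omega_h}$ and concludes by the triangle inequality, exactly as you do. Your explicit remarks on extracting the bounded right inverse and on bounding $\|\Div(\Bv-\Bv_h)\|_{L^2(\Omega_h)}$ by $\|\Bv-\Bv_h\|_{\Omega_h}$ only spell out steps the paper leaves implicit.
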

\begin{proof}.\quad
The idea is inspired by Sch\"{o}tzau, Schwab, and Toselli \cite[Section~4.1]{Schotzau2003}.
By the discrete inf-sup condition in Lemma \ref{lem:LBB} and \cite[Proposition 1.2, page~39]{bre1991}, there exists a constant $C_0>0$ independent of $h$ such that
\begin{equation}\label{LBB-q}
\sup_{0\neq q_h\in Q_h^{k-1}}
\frac{(\Div \Bz_{h},q_{h})_{\Omega_{h}}}{\NLtwo[\Omega_h]{q_h}}
	\geq C_0 \|\Bz_{h}\|_{\Omega_h},
\quad \forall\, \Bz_{h}\in \BV_{h,0}^{k}/\Upsilon_{h,0}.
\end{equation}
This shows that, for $\Bv_h\in\BV_{h,0}^k$, there is a unique $\Bz_h\in \BV_{h,0}^{k}/\Upsilon_{h,0}$ which satisfies
\ben
(\Div\Bz_h, q_h)_{\Omega_h} = (\Div\Bv_h, q_h)_{\Omega_h} \quad \forall\, q_h\in Q_h^{k-1}.
\een
This means that $\Bw_h:=\Bz_h-\Bv_h \in \Upsilon_{h,0}$ (from Lemma \ref{divfree}).
Moreover, using \eqref{LBB-q}, we also have
\begin{align*}
\|\Bz_{h}\|_{\Omega_h}\lesssim\,& \sup_{0\neq q_h\in Q_h^{k-1}}
\frac{(\Div \Bz_{h},q_{h})_{\Omega_{h}}}{\NLtwo[\Omega_h]{q_h}}
=\sup_{0\neq q_h\in Q_h^{k-1}}
\frac{(\Div (\Bv_h-\Bv),q_h)_{\Omega_{h}}
+(\Div \Bv,q_h)_{\Omega_h}}{\NLtwo[\Omega_h]{q_h}} \\
\lesssim\,& \|\Bv-\Bv_h\|_{\Omega_h} +\NLtwo[\Omega_h]{\Div\Bv}.
\end{align*}
It follows that
\begin{align*}
\N{\Bv-\Bw_h}_{\Omega_h} \le \|\Bv-\Bv_h\|_{\Omega_h} +\N{\Bz_h}_{\Omega_h}
\lesssim \N{\Bv-\Bv_h}_{\Omega_h}+\NLtwo[\Omega_h]{\Div\Bv}.
\end{align*}
The proof is finished.
\end{proof}

%%%%%%%%%%%%%%%%%%%%%%%%%%%%%%%%%%%%%
%%%%%%%%%%%%%%%%%%%%%%%%%%%%%%%%%%%%%
Now we are ready to present the main results of the paper. The theorem below presents the error estimate between the discrete velocity $\Bu_h$ and $\Bu$.
\begin{theorem}\label{thm:uh}
Suppose the extended exact solution satisfies $\Bu\in\BH^{k+1}(D_H)$, $\Bf \in \BW^{1,\infty}(D_H)$,
and $\Bu^{\sharp} = \Bu\circ \Phi_h^{-1}$ is the isoparametric transformed solution. The numerical solution $\Bu_h$ of problem \eqref{scheme} admits
\begin{equation*}
\|\Bu-\Bu_{h}\|_{\Omega_h} \lesssim
h^k \Big( \|\Bu\|_{\BH^{k+1}(D_H)}+\|\Bf\|_{\BW^{1,\infty}(D_H)} \Big).
\end{equation*}
\end{theorem}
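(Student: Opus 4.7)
I would proceed via a three-term splitting tied to the geometric mismatch between $\Omega$ and $\Omega_h$:
\[
\|\tilde\Bu-\Bu_h\|_{\Omega_h}\ \le\ \|\tilde\Bu-\Bu^{\sharp}\|_{\Omega_h}
+\|\Bu^{\sharp}-\Bw_h\|_{\Omega_h}
+\|\Bw_h-\Bu_h\|_{\Omega_h},
\]
where $\Bw_h\in\Upsilon_{h,0}$ is a discrete divergence-free surrogate of the transformed solution $\Bu^{\sharp}=\tilde\Bu\circ\Phi_h^{-1}$. The first term is the domain-mismatch error, the second a best-approximation error inside the divergence-free subspace, and the third a purely discrete error to be controlled via coercivity of $A_{\mathrm{DG}}$ together with a Galerkin-type consistency identity on the curved domain.

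The first two pieces are quantitative. Lemma \ref{lem:u-usharp} gives $\|\tilde\Bu-\Bu^{\sharp}\|_{\Omega_h}\lesssim h^k\|\tilde\Bu\|_{\BH^2(D_H)}$ directly. For the second piece I invoke Lemma \ref{lem:cea-vh} with $\Bv=\Bu^{\sharp}$, which reduces matters to
\[
\inf_{\Bw_h\in\Upsilon_{h,0}}\|\Bu^{\sharp}-\Bw_h\|_{\Omega_h}
\lesssim \inf_{\Bv_h\in\BV_{h,0}^k}\|\Bu^{\sharp}-\Bv_h\|_{\Omega_h}
+\|\Div\Bu^{\sharp}\|_{L^2(\Omega_h)}.
\]
Choosing $\Bv_h=\Cr_h\Bu^{\sharp}$ and combining Lemma \ref{lem:int-v} with the norm equivalence \eqref{u-usharp} bounds the first infimum by $h^k\|\tilde\Bu\|_{\BH^{k+1}(D_H)}$. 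Since $\Div\tilde\Bu=0$ on $D_H$, the chain rule collapses to $\Div\Bu^{\sharp}=\mathrm{tr}\!\bigl((\nabla\tilde\Bu\!\circ\!\Phi_h^{-1})(\bbJ_{\Phi_h^{-1}}-\bbI)\bigr)$, and the Lenoir bound $\|\bbI-\bbJ_{\Phi_h^{-1}}\|_{\BL^\infty(\Omega_h)}\lesssim h^k$ yields $\|\Div\Bu^{\sharp}\|_{L^2(\Omega_h)}\lesssim h^k\|\tilde\Bu\|_{\BH^1(D_H)}$.

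For the discrete piece, set $\Bz_h:=\Bw_h-\Bu_h\in\Upsilon_{h,0}$ and apply Lemma \ref{lem:coer} to write
\[
\tfrac12\|\Bz_h\|_{\Omega_h}^2\le A_{\mathrm{DG}}(\Bw_h-\tilde\Bu,\Bz_h)
+\bigl[A_{\mathrm{DG}}(\tilde\Bu,\Bz_h)-A_{\mathrm{DG}}(\Bu_h,\Bz_h)\bigr].
\]
Continuity of $A_{\mathrm{DG}}$ handles the first bracket by $\omega\|\Bw_h-\tilde\Bu\|_{\Omega_h}\|\Bz_h\|_{\Omega_h}$, already $O(h^k)\|\Bz_h\|_{\Omega_h}$. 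For the discrete bracket, $\Div\Bz_h=0$ (Lemma \ref{divfree}) collapses the pressure terms so the discrete equation reads $\nu A_{\mathrm{DG}}(\Bu_h,\Bz_h)=(\Bf_h,\Bz_h)_{\Omega_h}$; meanwhile, since the extended exact solution satisfies the strong PDE on $\Omega_h\subset D_H$, elementwise integration by parts, using continuity of $\nabla\tilde\Bu$ across interior faces and the $H(\Div)$-conformity of $\Bz_h$, produces the physical identity $(\tilde\Bf,\Bz_h)_{\Omega_h}=\nu\bigl[(\nabla_h\tilde\Bu,\nabla_h\Bz_h)_{\Omega_h}-\sum_{F}\int_F\avg{\nabla\tilde\Bu}:\jump{\Bz_h}\,ds\bigr]$.

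The crux of the proof is the consistency remainder $\nu A_{\mathrm{DG}}(\tilde\Bu,\Bz_h)-(\tilde\Bf,\Bz_h)_{\Omega_h}$. I would insert the $L^2$-projection $\Ck_h$ onto $\bbS_h$ and use the defining identity of $\Cl_h$ to rewrite $(\nabla_h\tilde\Bu,\Cl_h(\Bz_h))_{\Omega_h}$ as a sum of face integrals against $\avg{\Ck_h(\nabla\tilde\Bu)}$. This isolates three contributions: (i) the face mismatch $\sum_F\int_F\avg{\nabla\tilde\Bu-\Ck_h(\nabla\tilde\Bu)}:\jump{\Bz_h}\,ds$, bounded by $h^k\|\tilde\Bu\|_{\BH^{k+1}(D_H)}\|\Bz_h\|_{\Omega_h}$ via Lemma \ref{lerror}; (ii) the boundary lift $(\nabla_h\Bz_h,\Cl_h(\tilde\Bu))_{\Omega_h}$, controlled by Lemma \ref{lem:lift}; and (iii) the boundary penalty $\alpha\sum_{F\in\Cf_h^\partial}h_F^{-1}\langle\jump{\tilde\Bu},\jump{\Bz_h}\rangle_F$. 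The hardest part is (ii)--(iii), where $\tilde\Bu|_{\Gamma_h}\ne 0$: the geometric estimate $|\Bx-\Phi_h^{-1}(\Bx)|\lesssim h^{k+1}$ for $\Bx\in\Gamma_h$ combined with $\tilde\Bu|_{\Gamma}=0$ gives $\|\tilde\Bu\|_{\BL^2(\Gamma_h)}\lesssim h^{k+1}\|\tilde\Bu\|_{\BW^{1,\infty}(D_H)}$, providing the needed smallness. The residual piece $(\tilde\Bf-\Bf_h,\Bz_h)_{\Omega_h}$ is absorbed by the chosen definition of $\Bf_h$ and accounts for the $\|\Bf\|_{\BW^{1,\infty}(D_H)}$ factor in the final bound. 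Collecting everything yields $\|\Bz_h\|_{\Omega_h}\lesssim h^k\bigl(\|\tilde\Bu\|_{\BH^{k+1}(D_H)}+\|\Bf\|_{\BW^{1,\infty}(D_H)}\bigr)$, and the claimed estimate follows by the triangle inequality.
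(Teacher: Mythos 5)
Your proposal follows essentially the same architecture as the paper's proof: a Strang-type splitting over the divergence-free subspace $\Upsilon_{h,0}$, reduction of the best-approximation term via Lemma \ref{lem:cea-vh} and the transformed solution $\Bu^{\sharp}$ (correctly noting that $\Cr_h\tilde\Bu\notin\BV_{h,0}^k$ is the obstruction that $\Bu^{\sharp}$ removes), and a consistency analysis built on the projection $\Ck_h$, the lifting operator, and the residual $(\tilde\Bf-\Bf_h,\cdot)_{\Omega_h}$. Two local deviations are worth flagging. First, you apply Lemma \ref{lem:cea-vh} to $\Bv=\Bu^{\sharp}$ and then must estimate $\|\Div\Bu^{\sharp}\|_{L^2(\Omega_h)}$; your chain-rule computation gives the right $O(h^k)$ bound, but the paper sidesteps this entirely by applying the lemma to $\Bv=\tilde\Bu$, for which $\Div\tilde\Bu=0$ exactly, and only inserting $\Bu^{\sharp}$ afterwards in the infimum over $\BV_{h,0}^k$. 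Both work; the paper's ordering is marginally cleaner.

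The second deviation is a genuine (though repairable) gap. For the boundary contributions (ii)--(iii) you invoke $\|\tilde\Bu\|_{\BL^2(\Gamma_h)}\lesssim h^{k+1}\|\tilde\Bu\|_{\BW^{1,\infty}(D_H)}$, obtained from $\tilde\Bu|_{\Gamma}=\bm 0$ and $|\Bx-\Phi_h^{-1}(\Bx)|\lesssim h^{k+1}$. The inequality itself is fine, but the theorem only assumes $\Bu\in\BH^{k+1}(D_H)$, and in three dimensions $\BH^{2}(D_H)\not\hookrightarrow\BW^{1,\infty}(D_H)$, so for $k=1$ this step is not covered by the hypotheses and the norm $\|\tilde\Bu\|_{\BW^{1,\infty}(D_H)}$ does not appear in the claimed bound. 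The paper avoids this by observing that $\Bu^{\sharp}\in\BH_0^1(\Omega_h)$, so on boundary faces $\jump{\tilde\Bu}=\jump{\tilde\Bu-\Bu^{\sharp}}$; the boundary terms then become $\sigma_0(\tilde\Bu-\Bu^{\sharp};\Bw_h)$, which is bounded by $\|\tilde\Bu-\Bu^{\sharp}\|_{\Omega_h}\|\Bw_h\|_{\Omega_h}\lesssim h^k\|\tilde\Bu\|_{\BH^2(D_H)}\|\Bw_h\|_{\Omega_h}$ using only Lemma \ref{lem:u-usharp} and the trace inequality of Lemma \ref{lemma-trace}. Substituting that device for your $\BW^{1,\infty}$ estimate closes the gap and recovers the stated result under the stated regularity.
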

\begin{proof}.\quad
Recall  $\Upsilon_{h,0}=\{\left.\Bw_{h}\in \bm V_{h,0}^k\right| \Div\Bw_{h}=0\}$.
From Lemma~\ref{divfree}, we know that $\Bu_h\in \Upsilon_{h,0}$.
Thus $\Bu_h$ is the unique solution of following discrete variational problem (see more discussions of Section 3.2 of \cite{hip2018} and Section 5.1 of \cite{Durst2024})
\[\mathrm{Find}~\Bu_h \in \Upsilon_{h,0},~s.t.,~\nu \cdot a_{\mathrm{DG}}(\Bu_h, \Bw_h) = \nu \cdot A_{\mathrm{DG}}(\Bu_h, \Bw_h) = (\Bf_h, \Bw_h)_{\Omega_h}, \quad \forall \Bw_h \in \Upsilon_{h,0}.\]
where $\Bf_h = \Bf \circ \Phi_h^{-1}$. For any $\Bz_h \in \Upsilon_{h,0}$,
using the triangle inequality and the coercivity of $A_{\mathrm{DG}}(\cdot,\cdot)$ in Lemma \ref{lem:coer}, we obtain
\begin{equation}
\begin{aligned}
\|\Bu - \Bu_h\|_{\Omega_h} & \leq ||\Bu - \Bz_h||_{\Omega_h} +  \|\Bz_h - \Bu_h\|_{\Omega_h} \\
                                            & \lesssim  ||\Bu - \Bz_h||_{\Omega_h} + \frac{A_{\mathrm{DG}}(\Bz_h - \Bu_h, \Bz_h - \Bu_h)}{\|\Bz_h - \Bu_h\|_{\Omega_h}},
 \end{aligned}
\end{equation}
From the continuity of $A_{\mathrm{DG}}(\cdot,\cdot)$ in Lemma \ref{lem:coer}, we have
\[A_{\mathrm{DG}}(\Bz_h - \Bu, \Bz_h - \Bu_h) \lesssim \|\Bz_h - \Bu\|_{\Omega_h}\|\Bz_h - \Bu_h\|_{\Omega_h},\]
Then
\begin{equation}
\begin{aligned}
\|\Bu - \Bu_h\|_{\Omega_h} & \lesssim  ||\Bu - \Bz_h||_{\Omega_h} + \frac{A_{\mathrm{DG}}(\Bu - \Bu_h, \Bz_h - \Bu_h)}{\|\Bz_h - \Bu_h\|_{\Omega_h}}\\
                                            & \leq ||\Bu - \Bz_h||_{\Omega_h} +  \sup_{\Bw_h \in \Upsilon_{h,0}}\frac{|A_{\mathrm{DG}}(\Bu - \Bu_h, \Bw_h) |}{\|\Bw_h\|_{\Omega_h}},
 \end{aligned}
\end{equation}
Due to the arbitrariness of $\Bz_h \in \Upsilon_{h,0}$, we finally obtain
\be
\|\Bu - \Bu_h\|_{\Omega_h} \lesssim \inf_{\Bw_h \in \Upsilon_{h,0}}||\Bu - \Bw_h||_{\Omega_h} + \sup_{\Bw_h \in \Upsilon_{h,0}}\frac{|A_{\mathrm{DG}}(\Bu - \Bu_h, \Bw_h) |}{\|\Bw_h\|_{\Omega_h}}.
\ee
Using the results in Lemma \ref{lem:cea-vh} and $\Div\Bu = 0$ holds on $\Omega_h$, we have
\[\inf_{\Bw_h \in \Upsilon_{h,0}}||\Bu - \Bw_h||_{\Omega_h} \lesssim \inf_{\Bv_h \in \BV_{h,0}^k}||\Bu - \Bv_h||_{\Omega_h},\]
Finally
\be\label{eq:inf-supnew}
\|\Bu - \Bu_h\|_{\Omega_h} \lesssim \inf_{\Bv_h \in \BV_{h,0}^k}||\Bu - \Bv_h||_{\Omega_h} + \sup_{\Bw_h \in \Upsilon_{h,0}}\frac{|A_{\mathrm{DG}}(\Bu - \Bu_h, \Bw_h) |}{\|\Bw_h\|_{\Omega_h}}.
\ee
Here we remark that $\Cr_h \Bu \notin \BV_{h,0}^k$ due to the fact that $\Bu\cdot\Bn \neq 0$ on $\Gamma_h$.
Thus one can not choose $\Bv_h = \Cr_h \Bu$ in the above estimate.
However using the triangle inequality, with the help of $\Bu^\sharp$, the following estimate holds
\begin{equation}
||\Bu - \Bv_h||_{\Omega_h} \leq \|\Bu - \Bu^\sharp\|_{\Omega_h} + \|\Bu^\sharp - \Bv_h\|_{\Omega_h},
\end{equation}
Then
\[\inf_{\Bv_h \in \BV_{h,0}^k}||\Bu - \Bv_h||_{\Omega_h} \leq \|\Bu - \Bu^\sharp\|_{\Omega_h}  + \inf_{\Bv_h \in \BV_{h,0}^k}\|\Bu^\sharp - \Bv_h\|_{\Omega_h}.\]
Because $\Bu^\sharp \in \BH_0^1(\Omega_h)$, we can choose $\Bv_h = \Cr_h \Bu^\sharp \in \BV_{h,0}^k$.
With Lemma \ref{lem:u-usharp}, we have
\begin{equation}\label{eq:keykey}
\inf_{\Bv_h \in \BV_{h,0}^k}||\Bu - \Bv_h||_{\Omega_h} \lesssim h^k \|\Bu\|_{\BH^{k+1}(D_H)}.
\end{equation}

Let $\Bw_h \in \Upsilon_{h,0}$, then $\nabla\cdot\Bw_h =0$ and $\Bw_h \cdot \Bn = 0$ on $\Gamma_h$.
So using integration by parts, we will have
\[(\nabla p, \Bw_h)_{\Omega_h} = 0,\]
for the extended true solution $p$. Note that
\begin{equation}
A_{\mathrm{DG}}(\Bu  - \Bu_h, \Bw_h) = -A_{\mathrm{DG}}(\Bu_h, \Bw_h) + A_{\mathrm{DG}}(\Bu, \Bw_h),
\end{equation}
Using the fact extension $\Bu$ is smooth enough and $\Bu^\sharp \in \BH_0^1(\Omega_h)$, from the definition of $A_{\mathrm{DG}}(\cdot,\cdot)$ and $\Cl_F$ operator defined on $\bbS_h$, we will obtain
\begin{equation}
\begin{aligned}
\nu A_{\mathrm{DG}}(\Bu, \Bw_h) &=  \nu\int_{\Omega_h}\nabla\Bu:\nabla_h\Bw_h~\mathrm{d}\Bx
                                                                 -\nu\sum_{F\in\mathcal{F}_{h}} \int_{\Omega_h} \Ck_h(\nabla\Bu) : \Cl_F(\Bw_h)~\mathrm{d}\Bx\\
&                                                             -\nu\sum_{F\in\mathcal{F}_{h}^{\partial}}\int_{F} \nabla_h\Bw_h : \Bu\otimes\Bn~\mathrm{ds}
                                                               +\nu\sum_{F\in\mathcal{F}_{h}^{\partial}}\frac{\alpha} {h_{\widehat{F}}} \int_{F} \Bu\cdot\Bw_h~\mathrm{ds}\\
&= \sum_{K\in \mathcal{T}_{h}} (-\nu\Delta \Bu + \nabla p, \Bw_h)_{K} +\nu\sigma_0(\Bu-\Bu^\sharp; \Bw_h) + \nu\sigma_1(\Bu,\Bw_h)\\
& = (\Bf,\Bw_h)_{\Omega_h} + \nu\sigma_0(\Bu-\Bu^\sharp; \Bw_h)
+\nu\sigma_1(\Bu; \Bw_h),
\end{aligned}
\end{equation}
where
\begin{align*}
\sigma_0(\Bu-\Bu^\sharp; \Bw_h) &= -\sum_{F\in\mathcal{F}_{h}^{\partial}}\int_{F} \nabla_h\Bw_h : (\Bu-\Bu^\sharp)\otimes\Bn~\mathrm{ds}
+\sum_{F\in\mathcal{F}_{h}^{\partial}}\frac{\alpha}{h_{\widehat{F}}}\int_{F} (\Bu-\Bu^\sharp) \cdot \Bw_h~\mathrm{ds},\\
\sigma_1(\Bu, \Bw_h) &=\sum_{F\in\mathcal{F}_{h}}  \int_F {\AVG{\nabla\Bu-\mathcal{K}_h(\nabla\Bu)} : \left\llbracket \Bw_h  \right\rrbracket}~\mathrm{ds}.
\end{align*}
and $\mathcal{K}_h(\nabla\Bu)$ is the projection operator in Lemma \ref{lerror}.
From the definition of $a_{\mathrm{DG}}(\cdot,\cdot)$, we have
\begin{align*}
\nu A_{\mathrm{DG}}(\Bu_h, \Bw_h) &= \nu a_{\mathrm{DG}}(\Bu_h,\Bw_h),
\end{align*}
Then we have
\begin{equation}
\nu A_{\mathrm{DG}}(\Bu  - \Bu_h, \Bw_h)  = (\Bf-\Bf_h,\Bw_h)_{\Omega_h} +\nu\sigma_0(\Bu-\Bu^\sharp; \Bw_h) +\nu\sigma_1(\Bu,\Bw_h),
\end{equation}
Furthermore, because $\Bw_h$ is a function in $\BV_h^k$,
using Lemma \ref{lemma-trace} and Lemma \ref{lem:u-usharp},
we can prove that (see details in \cite{arnold2002} for the first inequality)
\begin{equation}
|\sigma_0(\Bu-\Bu^\sharp; \Bw_h) | \lesssim \|\Bu-\Bu^\sharp\|_{\Omega_h} \|\Bw_h\|_{\Omega_h} \lesssim h^k  \|\Bu\|_{\BH^{2}(D_H)} \|\Bw_h\|_{\Omega_h},
\end{equation}
With the fact that the Poinc\'{a}re's inequality holds (see Lemma \ref{lem:poincare}), we have
\[|(\Bf-\Bf_h,\Bw_h)_{\Omega_h} | \lesssim h^{k+1} \|\Bf\|_{\BW^{1,\infty}(D_H)} \|\Bw_h\|_{\BL^2(\Omega_h)}
\lesssim h^{k+1} \|\Bf\|_{\BW^{1,\infty}(D_H)} \|\Bw_h\|_{\Omega_h},\]
Using the Lemma \ref{lerror} for the estimate of the projection $\Ck_h(\cdot)$, we obtain
\be
|\sigma_1(\Bu, \Bw_h)| &\lesssim  \Big(\sum_{K \in \Ct_h}  h_{\widehat{K}} \|\nabla\Bu-\mathcal{K}_h(\nabla\Bu)\|_{\BL^2(\partial K)}^2\Big)^{1/2}  \|\Bw_h\|_{\Omega_h}
\lesssim h^k \|\Bu\|_{\BH^{k+1}(D_H)}  \|\Bw_h\|_{\Omega_h},
\ee
Thus
\be\label{eq:keykeykey}
 \sup_{\Bw_h \in \Upsilon_{h,0}}\frac{|A_{\mathrm{DG}}(\Bu - \Bu_h, \Bw_h) |}{\|\Bw_h\|_{\Omega_h}}
 \lesssim h^{k+1} \|\Bf\|_{\BW^{1,\infty}(D_H)} +h^k \|\Bu\|_{\BH^{k+1}(D_H)},
\ee
Finally from \eqref{eq:inf-supnew}, \eqref{eq:keykey} and \eqref{eq:keykeykey}, we have (here $\Bu$ should be understood as the extension of original true solution)
\begin{equation}
\|\Bu - \Bu_h\|_{\Omega_h} \lesssim h^k \Big(\|\Bu\|_{\BH^{k+1}(D_H)}+\|\Bf\|_{\BW^{1,\infty}(D_H)} \Big).
\end{equation}
We complete the proof.
\end{proof}

%%%%%%%%%%%%%%%%%%%%%%%%%%%%%%%%%%%%%%%%%%%%%%
%%%%%%%%%%%%%%%%%%%%%%%%%%%%%%%%%%%%%%%%%%%%%
%%%%%%%%%%%% pressure error estimate %%%%%%%%%%%%%%%%%%%%%
%%%%%%%%%%%%%%%%%%%%%%%%%%%%%%%%%%%%%%%%%%%%
Now we prove the error estimate for the solution $p_h$ of \eqref{scheme}.
\begin{theorem}\label{thm:ph}
Let $p$ be the true solution of Stokes equations and assume $p \in   H^k(D_H)$, then there holds
\begin{equation}
\|p - p_h\|_{L^2(\Omega_h)} \lesssim h^k \Big(\|\Bu\|_{\BH^{k+1}(D_H)} +\|p\|_{H^k(D_H)} + \|\Bf\|_{\BW^{1,\infty}(D_H)} \Big).
\end{equation}
\end{theorem}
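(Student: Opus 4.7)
\textbf{Proof plan for Theorem \ref{thm:ph}.}
The plan is to use the standard saddle-point pressure estimate: split the error via the $L^2$-projection $\Cq_h(p)$ onto $Q_h^{k-1}$, control the interpolation part directly by Lemma~\ref{lem:int-p}, and control the discrete part $\Cq_h(p)-p_h\in Q_h^{k-1}$ through the inf-sup condition in Lemma~\ref{lem:LBB}. The key ingredient is an error equation relating $(p-p_h,\Div\Bv_h)_{\Omega_h}$ to the velocity error and to the same consistency residuals $\sigma_0$ and $\sigma_1$ that already appeared in the proof of Theorem~\ref{thm:uh}.

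First I would write $\|p-p_h\|_{L^2(\Omega_h)}\le\|p-\Cq_h(p)\|_{L^2(\Omega_h)}+\|\Cq_h(p)-p_h\|_{L^2(\Omega_h)}$, bound the first term by $h^k\|p\|_{H^k(D_H)}$ using Lemma~\ref{lem:int-p}, and apply Lemma~\ref{lem:LBB} to obtain
\[
\|\Cq_h(p)-p_h\|_{L^2(\Omega_h)}\lesssim \sup_{\Bv_h\in\bm V_{h,0}^k\setminus\{0\}}\frac{(\Cq_h(p)-p,\Div\Bv_h)_{\Omega_h}+(p-p_h,\Div\Bv_h)_{\Omega_h}}{\|\Bv_h\|_{\Omega_h}}.
\]
The first numerator term is handled by Cauchy–Schwarz, Lemma~\ref{lem:int-p}, and the trivial bound $\|\Div\Bv_h\|_{L^2(\Omega_h)}\lesssim\|\Bv_h\|_{\Omega_h}$, giving $\lesssim h^k\|p\|_{H^k(\Omega_h)}\|\Bv_h\|_{\Omega_h}$.

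The main work is the second term, where I would derive an error equation by reproducing the calculation in Theorem~\ref{thm:uh} but keeping the pressure contribution that previously dropped out. Testing the extended Stokes equations against an arbitrary $\Bv_h\in\bm V_{h,0}^k$, integrating by parts elementwise, and invoking $\BH(\Div)$-conformity of $\Bv_h$ together with $\Bv_h\cdot\Bn|_{\Gamma_h}=0$ to cancel all pressure face contributions, I would obtain the consistency identity
\[
\nu A_{\mathrm{DG}}(\Bu,\Bv_h)=(\Bf,\Bv_h)_{\Omega_h}+(p,\Div\Bv_h)_{\Omega_h}+\nu\sigma_0(\Bu-\Bu^\sharp;\Bv_h)+\nu\sigma_1(\Bu;\Bv_h).
\]
Subtracting the discrete scheme \eqref{scheme} (and using $a_{\mathrm{DG}}(\Bu_h,\Bv_h)=A_{\mathrm{DG}}(\Bu_h,\Bv_h)$) yields
\[
(p-p_h,\Div\Bv_h)_{\Omega_h}=\nu A_{\mathrm{DG}}(\Bu-\Bu_h,\Bv_h)-(\Bf-\Bf_h,\Bv_h)_{\Omega_h}-\nu\sigma_0(\Bu-\Bu^\sharp;\Bv_h)-\nu\sigma_1(\Bu;\Bv_h).
\]

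Finally I would bound each term by $h^k(\|\Bu\|_{\BH^{k+1}(D_H)}+\|\Bf\|_{\BW^{1,\infty}(D_H)})\|\Bv_h\|_{\Omega_h}$: the $A_{\mathrm{DG}}$ term by the continuity in Lemma~\ref{lem:coer} together with the velocity estimate of Theorem~\ref{thm:uh}; the source discrepancy $(\Bf-\Bf_h,\Bv_h)_{\Omega_h}$ and the $\sigma_0$, $\sigma_1$ terms by the exact same arguments already used in the proof of Theorem~\ref{thm:uh} (Lemma~\ref{lem:u-usharp}, Lemma~\ref{lerror}, and Lemma~\ref{lem:poincare}). Combining everything and absorbing $\|p\|_{H^k(\Omega_h)}\le\|p\|_{H^k(D_H)}$ delivers the stated estimate. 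The main obstacle is step two above: one has to be careful that the test function is now general in $\bm V_{h,0}^k$ rather than in $\Upsilon_{h,0}$, so the pressure term $(p,\Div\Bv_h)_{\Omega_h}$ no longer vanishes and must be tracked through the integration by parts; the normal-trace continuity of $\Bv_h$ and the smoothness of the extended $p$ across interior faces are what make the remaining interior face contributions cancel so that this term appears in its clean form.
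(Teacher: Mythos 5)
Your proposal is correct and follows essentially the same route as the paper: the same splitting $p-p_h=(p-\Cq_h p)-(p_h-\Cq_h p)$, the same consistency/error equation carrying the residuals $\sigma_0$ and $\sigma_1$ from Theorem~\ref{thm:uh}, the inf-sup condition of Lemma~\ref{lem:LBB} applied to the discrete part, and Lemmas~\ref{lem:int-p} and \ref{lem:coer} together with the velocity estimate to bound each term. Your remark about tracking the pressure term for general $\Bv_h\in\bm V_{h,0}^k$ (rather than $\Upsilon_{h,0}$) correctly identifies the only point where the argument differs from the velocity proof.
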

\begin{proof}.\quad
The true solutions $(\Bu, p)$ of Stokes equations satisfy the following identity
\begin{align*}
\nu A_{\mathrm{DG}}(\Bu,\Bv_h) -& (p, \Div\Bv_h)_{\Omega_h} = (\Bf, \Bv_h)_{\Omega_h} + \nu\sigma_0(\Bu-\Bu^\sharp; \Bv_h)\\
&+\nu\sum_{F\in\mathcal{F}_{h}}\int_F {\AVG{\nabla\Bu-\mathcal{K}_h(\nabla\Bu)} : \left\llbracket \Bv_h  \right\rrbracket}~\mathrm{ds}, \quad \forall \Bv_h \in \BV_{h,0}^k.
\end{align*}
Then one can obtain
\begin{equation}\label{pressure-main}
\begin{aligned}
\nu A_{\mathrm{DG}}(\Bu-\Bu_h,\Bv_h) -& (p-p_h, \Div\Bv_h)_{\Omega_h} = (\Bf-\Bf_h, \Bv_h)_{\Omega_h}\\
& + \nu\sigma_0(\Bu-\Bu^\sharp; \Bv_h) + \nu\sigma_1(\Bu,\Bv_h).
\end{aligned}
\end{equation}
Using the following identity
\[p - p_{h} 	=p  - \Cq_h p  - (p_{h}-\Cq_h p)=e_{p} - \eta_{p}.\]
we have
\be\label{eq:decom}
\|p - p_h\|_{L^2(\Omega_h)} \leq \| e_{p}\|_{L^2(\Omega_h)} + \|\eta_{p}\|_{L^2(\Omega_h)},
\ee
So it is enough to estimate $\|\eta_{p}\|_{L^2(\Omega_h)}$. From \eqref{pressure-main} we have
\begin{equation}
\begin{aligned}
(\eta_{p}, \Div\Bv_h)_{\Omega_h} &= -\nu A_{\mathrm{DG}}(\Bu-\Bu_h,\Bv_h) + (e_{p}, \Div\Bv_h)_{\Omega_h} +  (\Bf-\Bf_h, \Bv_h)_{\Omega_h} \\
&+\nu\sigma_0(\Bu-\Bu^\sharp; \Bv_h)+\nu\sigma_1(\Bu,\Bv_h),
\end{aligned}
\end{equation}
From Lemma \ref{lem:coer} and Lemma \ref{lem:int-p}, we obtain
\[|A_{\mathrm{DG}}(\Bu-\Bu_h,\Bv_h)| \lesssim  \|\Bu-\Bu_{h}\|_{\Omega_h} \cdot \|\Bv_{h}\|_{\Omega_h}\]
\[|(e_{p}, \Div\Bv_h)_{\Omega_h}| \lesssim h^k \|p\|_{H^k(D_H)} \|\Bv_h\|_{\Omega_h},\]
Similar to the proof in Theorem \ref{thm:uh}, we will get
\begin{equation}
\frac{\left|(\eta_{p}, \Div\Bv_h)_{\Omega_h}\right|}{\|\Bv_h\|_{\Omega_h}}
\lesssim h^k \Big(\|\Bu\|_{\BH^{k+1}(D_H)}  + \|p\|_{H^k(D_H)} + \|\Bf\|_{\BW^{1,\infty}(D_H)}\Big).
\end{equation}
By incorporating the discrete $\inf$-$\sup$ condition from Lemma \ref{lem:LBB}, the inequality \eqref{eq:decom} and the Lemma \ref{lem:int-p}, the proof is accomplished.
\end{proof}

\section{Numerical experiments}\label{sec:experiments}
In this section, we present two numerical examples to validate the theoretical results.
For the IPDG formulation one must choose $\alpha$ large enough such that the scheme is well-posed.
In the numerical experiments, the penalty parameter $\alpha$ in the definition of $a_{\mathrm{DG}}(\cdot,\cdot)$ is set by 20.
The implementations are based on the finite element package PHG \cite{ZhangPHG}.
We employ a preconditioned MINRES algorithm with a block diagonal preconditioner to solve the linear algebraic saddle system (more details can be seen in \cite[Chapter~4]{ESW2014}).
We set the relative tolerance for MINRES by $\varepsilon =10^{-12}$ in all computations.
The viscosity coefficient is set to $\nu=1$.

For simplicity, we set $k=2$ in our numerical experiments,
meaning that the velocity $\Bu$ is discretized by the second-order parametric BDM elements and the pressure $p$ is discretized by the first-order parametric volume elements.
Consequently, the curved mesh $\Ct_h$ is generated using a piecewise quadratic mapping $\BM_h$.

\begin{example}\label{ex1}
This example tests the convergence rate of the parametric mixed finite element method.
The computational domain is given by the unit ball
\ben
\Omega = \{\Bx \in \bbR^3: |\Bx| < 1\}.
\een
The analytical solutions are set by
\[
\Bu = (\sin y, \cos z, -x)^T, \quad p = x^2 + y^2 + z^2 -3/5.
\]
and the source term $\Bf$ is computed from $\Bu$ and $p$.
\end{example}

First we compute the numerical solutions $\hBu_h$ and $\hat p_h$ on straight meshes, namely, on $\wh\Ct_h$. The results are presented in Table \ref{FEMerror1}.
We find that optimal convergence rates are not achieved for both $\hBu_h$ and $\hat p_h$ (see Fig.~\ref{figerror2} and Fig.~\ref{figerror3}),
\[
\|\Bu-\hBu_h\|_{\widehat{\Omega}_h} \sim  O(h^{3/2}),\quad
\|p-\hat p_h\|_{L^2(\widehat{\Omega}_h)} \sim  O(h^{3/2}),
\]
where $h \sim N^{-1/3}$ and $N$ is the
number of degrees of freedoms of the corresponding physical field.
However, $\hBu_h$ is still found to be divergence-free within the order of tolerance $\varepsilon$.
\begin{table}[!htbp]
  \caption{Finite element errors on straight meshes $($Example~\ref{ex1}$)$.}
  \centering
  \begin{tabular}{@{}|c|c|c|c|c|@{}}
  \hline
  Grid                                                                                 &$\widehat{\Ct}_1$         &$\widehat{\Ct}_2$           &$\widehat{\Ct}_3$                  &$\widehat{\Ct}_4$   \\
  \hline
   DOFs $\hBu_h/\hat{p}_h$                                                        &1008/192     &7488/1536         &57600/12288       &451584/98304   \\\hline
   $\|\Bu-\hBu_h\|_{\widehat{\Omega}_h}$                    &3.26e-01       &1.15e-01           &4.06e-02              &1.45e-02  \\\hline
   $\|p-\hat p_h\|_{L^2(\widehat{\Omega}_h)}$                     &3.40e-01       &1.18e-01           &3.95e-02              &1.33e-02  \\\hline
   $\|\Div\hBu_h\|_{L^2(\widehat{\Omega}_h)}$              &1.04e-11       &4.48e-12           &2.12e-11              &1.51e-11\\\hline
\end{tabular}
\label{FEMerror1}
\end{table}

Next we compute the numerical solutions $\Bu_h$ and $p_h$ with the proposed parametric mixed finite element method \eqref{scheme} on curved meshes, namely on $\Ct_h$.
The computational results are presented in Table \ref{FEMerror2}. In this case, optimal convergence rates are obtained for both $\Bu_h$ and $p_h$ (see Fig.~\ref{figerror2} and Fig.~\ref{figerror3}),
\[
\|\Bu-\Bu_h\|_{\Omega_h} \sim O(h^2),\quad \|p-p_h\|_{L^2(\Omega_h)} \sim  O(h^2).
\]
At the same time, $\Bu_h$ is divergence-free within the order of tolerance $\varepsilon$.
\begin{table}[!htbp]
\caption{Finite element errors on curved meshes $\Ct_h$ $($Example~\ref{ex1}$)$.}
  \centering
  \begin{tabular}{@{}|c|c|c|c|c|@{}}
  \hline
  Grid                                                                                 &$\Ct_1$         &$\Ct_2$           &$\Ct_3$                  &$\Ct_4$   \\
  \hline
   DOFs $\Bu_h/p_h$                                                     &1008/192     &7488/1536         &57600/12288       &451584/98304   \\\hline
   $\|\Bu-\Bu_h\|_{\Omega_h}$                                  &1.18e-01       &3.56e-02           &8.99e-03              &2.03e-03  \\\hline
   $\|p-p_h\|_{L^2(\Omega_h)}$                                   &1.11e-01       &4.42e-02           &1.28e-02              &3.31e-03  \\\hline
   $\|\Div\Bu_h\|_{L^2(\Omega_h)}$                            &1.15e-11        &1.74e-11           &1.93e-11              &1.24e-11 \\\hline
\end{tabular}\label{FEMerror2}
\end{table}

\begin{figure}[!htpb]
  \centering
  \includegraphics[width=0.4\textwidth]{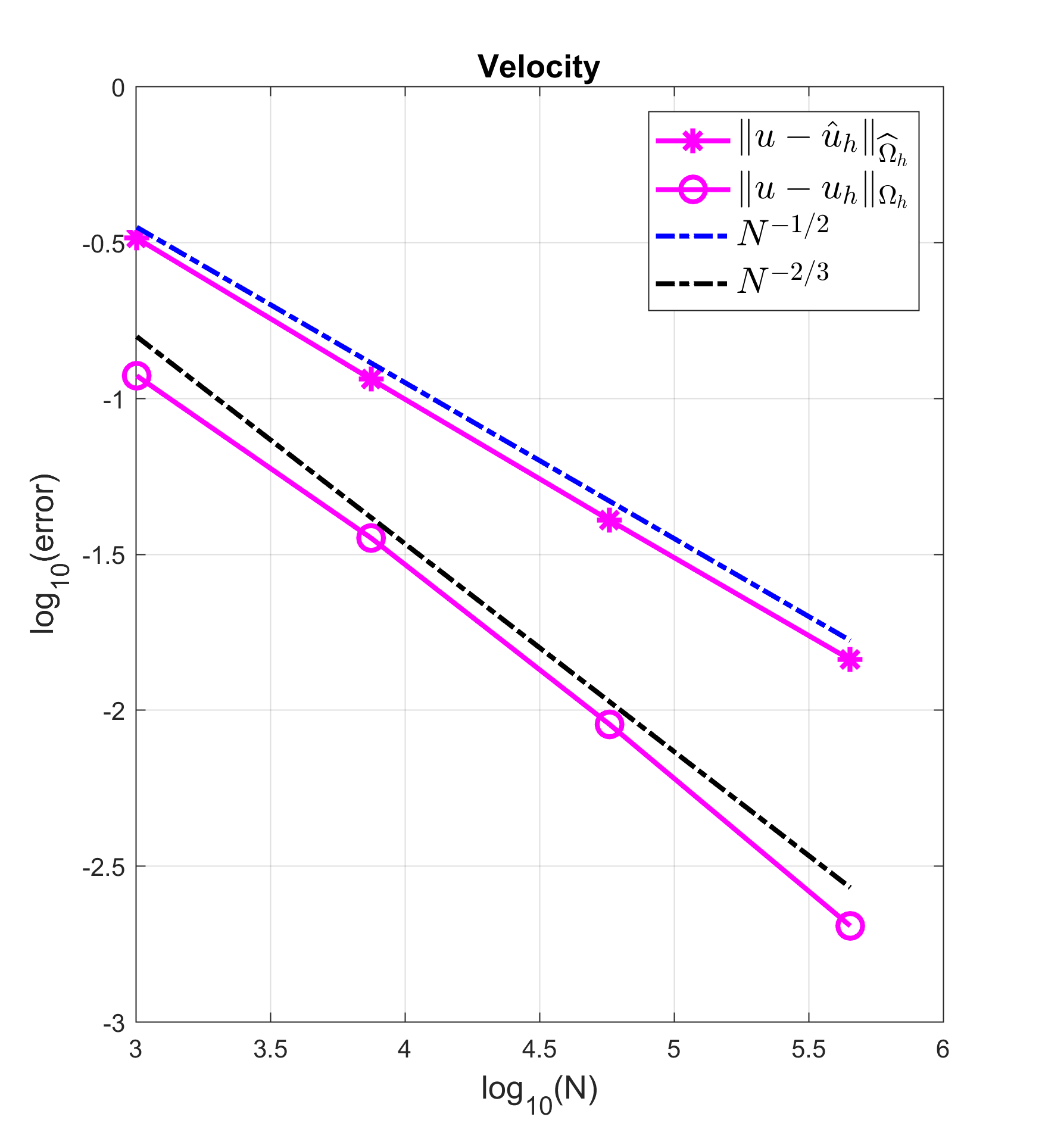}
  \caption{The decreasing trend of errors for $\Bu_h$ on both the straight meshes and curved meshes.}
  \label{figerror2}
\end{figure}

\begin{figure}[!htpb]
  \centering
  \includegraphics[width=0.4\textwidth]{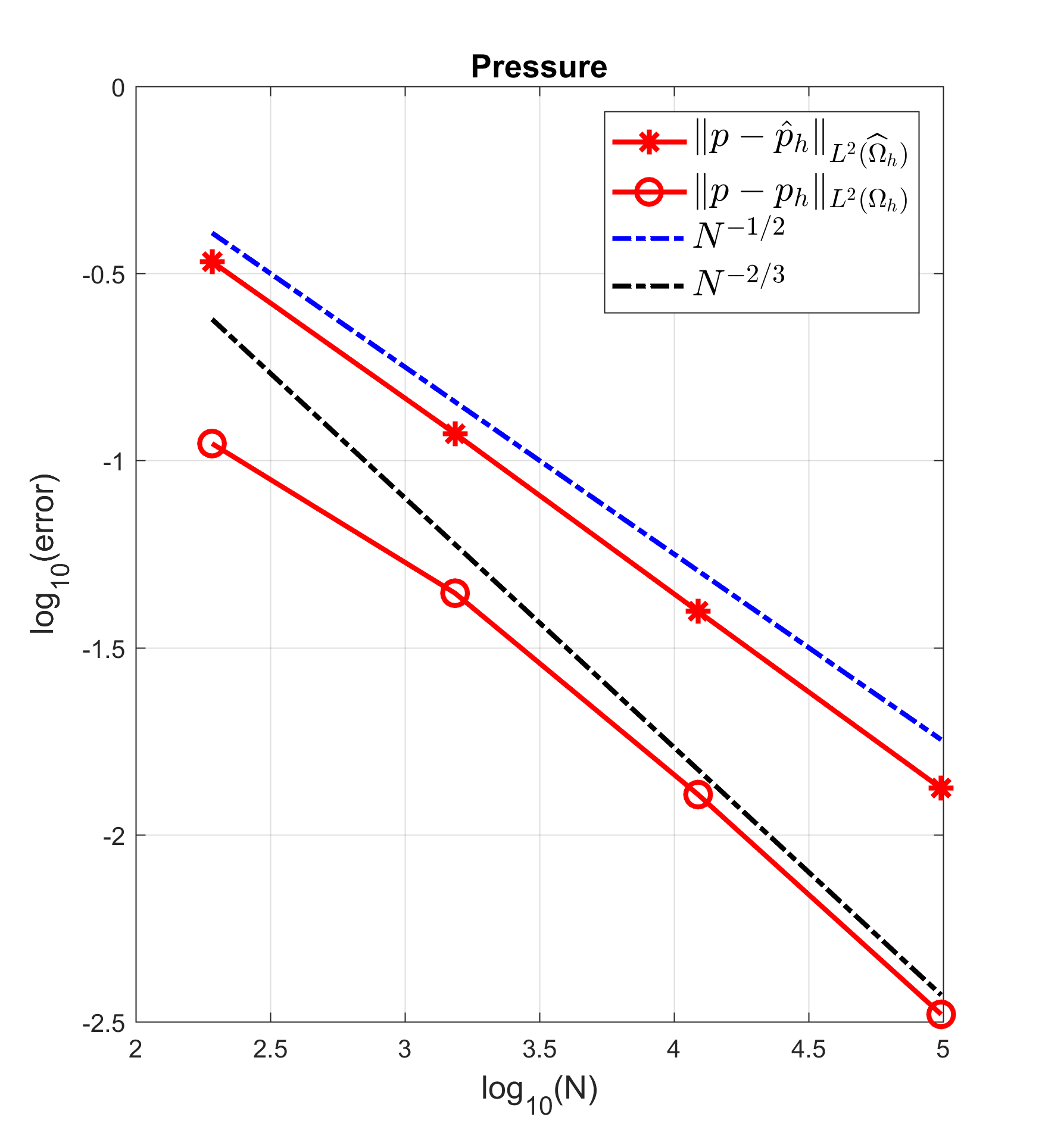}
  \caption{The decreasing trend of errors for $p_h$ on both the straight meshes and curved meshes.}
  \label{figerror3}
\end{figure}

\begin{example}\label{ex2}
This example investigates the divergence-free property of the parametric mixed finite element method by
a modified 3D driven cavity flow. The cavity is given by the cube $(0,1)^3$ with a ball of radius $0.25$  subtracted, namely,
\ben
\Omega = (0,1)^3 \backslash \overline{\mathrm{B}(\Bx_c,0.25)},
\een
where $\mathrm{B}(\Bx_c,0.25)$ denotes the ball of radius $0.25$ and centered at $\Bx_c = (0.5,0.5,0.5)^T$. The source term is set by $\Bf = \mathbf{0}$, and Dirichlet boundary condition is set on the whole boundary, namely,
\begin{equation*}
\Bu = (g,0,0)^T\quad\hbox{on}\;\;\partial\Omega,\quad
g(x,y,z) =
\begin{cases}
1,  \quad \hbox{if}~~ z=1,\\
0,   \quad \hbox{otherwise}.
\end{cases}
\end{equation*}
\end{example}

The computational domain $\Omega_h$ is partitioned into a mesh which consists of 27,889 tetrahedra.
The numerical result reveal a divergence-free error
\[
\|\Div\Bu_h\|_{L^2(\Omega_h)} = 1.11 \times 10^{-11}.
\]
In Fig. \ref{example2uh3D}, we show the 3D distribution of $\Bu_h$ on two cross-sections $z=0.85$ and $y=0.5$ of the domain. The fluid flows counterclockwise around the internal ball.
The streamlines of the velocity from the line source $(0,0.5,0.8)\leftrightarrow(1,0.5,0.8)$ are plotted in Fig. \ref{example2uhline}. They show that two small vortices are generated above the ball.

\begin{figure}[!htbp]
  \centering
  \includegraphics[width=0.5\textwidth]{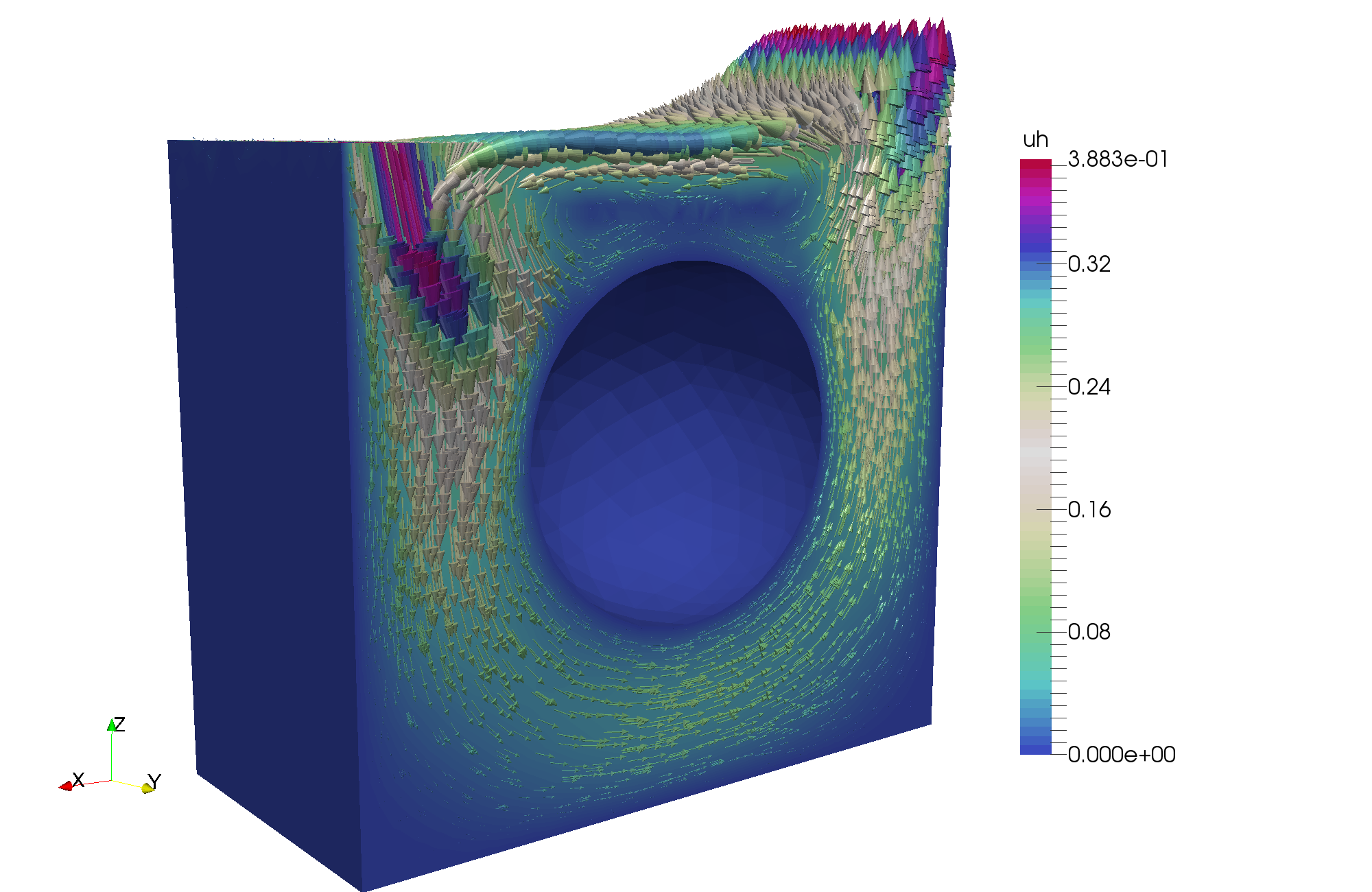}
  \caption{The 3D distribution of velocity $\Bu_h$ with 27,889 tetrahedra, where  the flow directions are plotted after two clips of $y = 0.5$ and $z = 0.85$.}
  \label{example2uh3D}
\end{figure}

\begin{figure}[!htbp]
  \centering
  \includegraphics[width=0.5\textwidth]{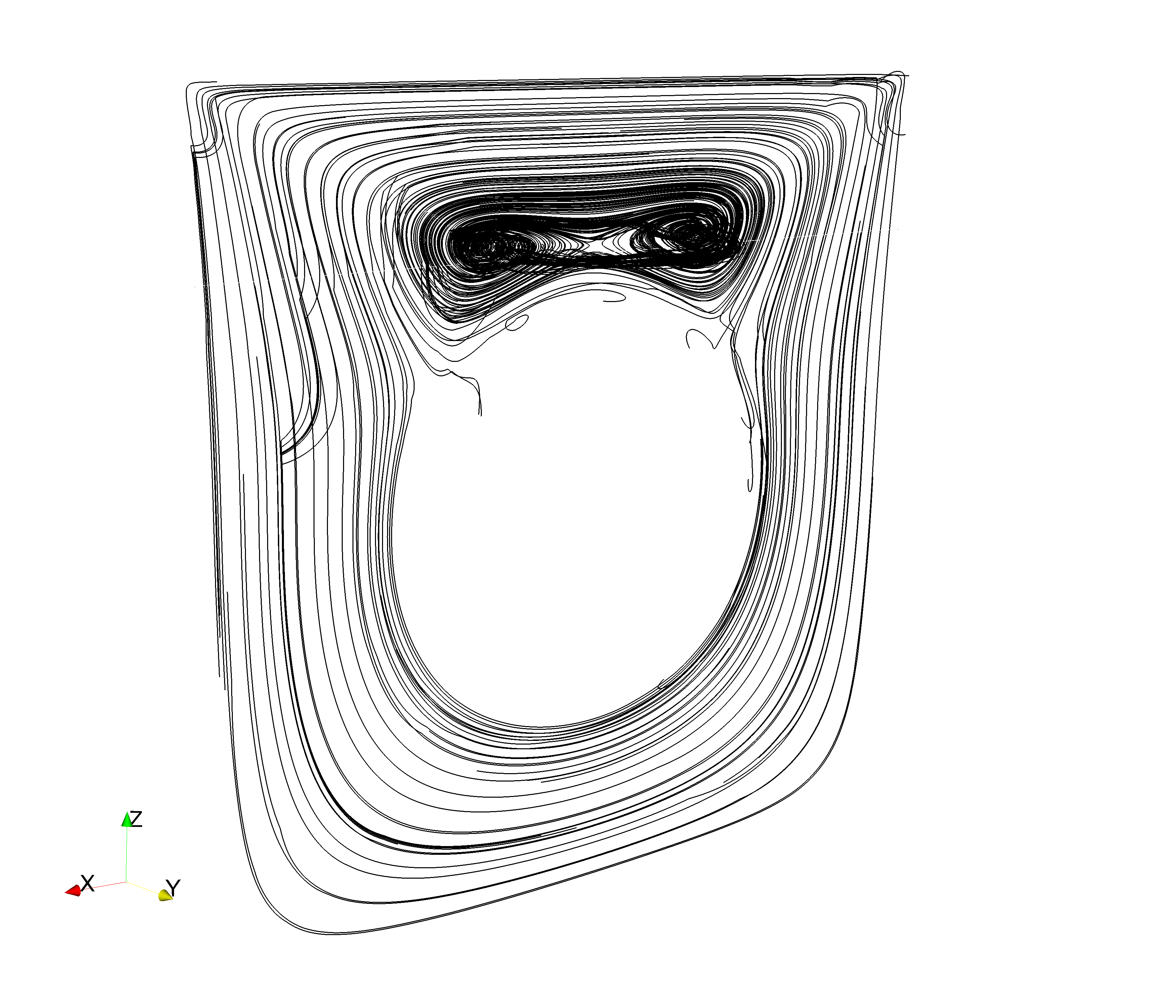}
  \caption{Streamlines of $\Bu_h$ from the line source $(0,0.5,0.8)\leftrightarrow(1,0.5,0.8)$ }
  \label{example2uhline}
\end{figure}

Finally, in Fig.~\ref{example2grid}, we present an internal view of a coarse mesh which consists of 3,880 tetrahedra. It shows that boundary tetrahedra are curved.
\begin{figure}[!htbp]
  \centering
  \includegraphics[width=0.5\textwidth]{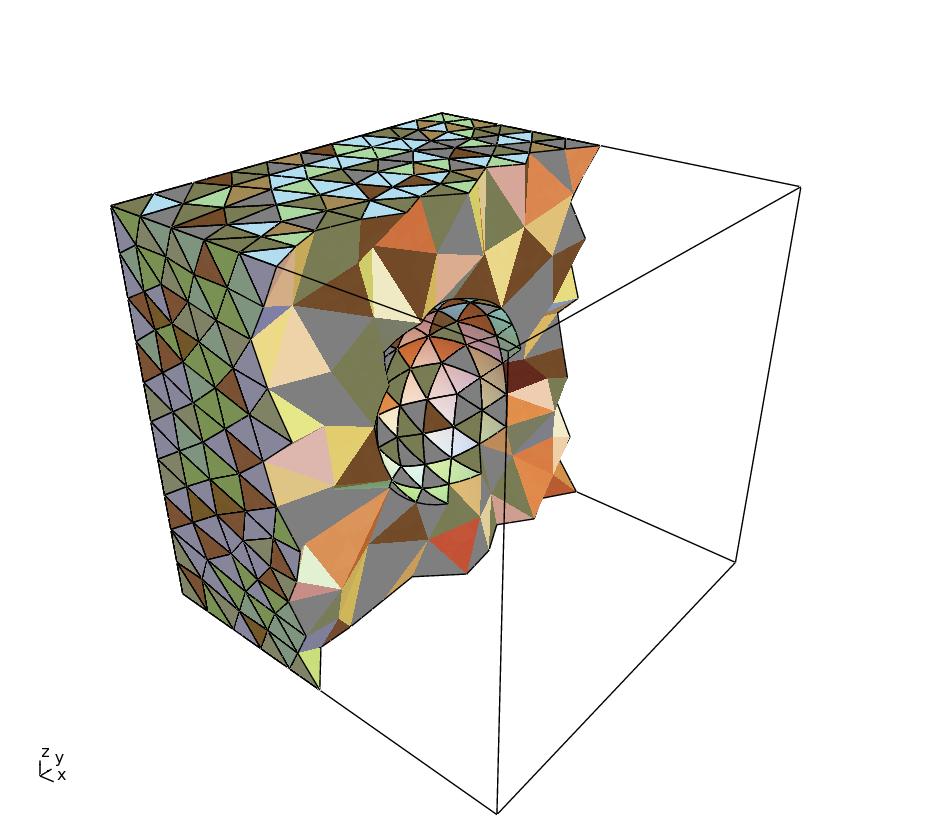}
  \caption{The internal view of the 3D unstructured mesh with 3,880 tetrahedra. Note that the tetrahedra are curved near the sphere.}
  \label{example2grid}
\end{figure}

%%%
\section{Conclusion}
In this paper, we propose a divergence-free parametric mixed DG finite element method for solving 3D Stokes equations on domains with curved boundaries.
Optimal error estimates are obtained for both the velocity and pressure approximations. A merit of the method is that, on \textit{3D curved domains},
it admits the divergence-free condition of the velocity exactly and ensure
the optimal convergence rate at the same time. Numerical results are consistent with the theoretical analyses. Our ongoing work is focused on fluid dynamic problems on moving boundaries.

%%%

\bibliographystyle{amsplain}

\end{document}